\pgfplotsset{compat=1.15}
\newcommand{\ini}{\operatorname{in}}
\newcommand{\vv}{\mathrm{v}}
\newcommand{\reg}{\mathrm{reg}}
\newtheorem{theorem}{Theorem}[section]
\newtheorem{lemma}[theorem]{Lemma}
\newtheorem{conjecture}[theorem]{Conjecture}
\newtheorem{definition}[theorem]{Definition}
\newtheorem{proposition}[theorem]{Proposition}
\newtheorem{example}[theorem]{Example}
\newtheorem{remark}[theorem]{Remark}
\newtheorem{question}[theorem]{Question}
\newtheorem{corollary}[theorem]{Corollary}
\title{On the $\vv$-number of binomial edge ideals of some classes of graphs}
\author{Deblina Dey}
\author{A. V. Jayanthan}
\address{Department of Mathematics, I.I.T. Madras, Chennai, Tamil Nadu,  India - 600036.}
\email{deblina.math@gmail.com}
\email{jayanav@iitm.ac.in}
\author{Kamalesh Saha}
\address{Chennai Mathematical Institute, Siruseri, Tamil Nadu,  India - 603103.}
\email{ksaha@cmi.ac.in; kamalesh.saha44@gmail.com}
\keywords{$\vv$-number, binomial edge ideals, Cohen-Macaulay closed graph, cycle, binary tree.}
\subjclass{Primary 05E40, 13F20, Secondary 05C25, 05C69.}
\begin{document}
\begin{abstract}
Let $G$ be a finite simple graph, and $J_G$ denote the binomial edge ideal of $G$. In this article, we first compute the $\vv$-number of binomial edge ideals corresponding to Cohen-Macaulay closed graphs. As a consequence, we obtain the $\vv$-number for paths. For cycle and binary tree graphs, we obtain a sharp upper bound for $\vv(J_G)$ using the number of vertices of the graph. We characterize all connected graphs $G$ with $\vv(J_G) = 2$. We show that for a given pair $(k,m), k\leq m$, there exists a graph $G$ with an associated monomial edge ideal $I$ having $\vv$-number equal to $k$ and regularity $m$. If $2k \leq m$, then there exists a binomial edge ideal with $\vv$-number $k$ and regularity $m$. 
Finally, we compute $\vv$-number of powers of binomial edge ideals with linear resolution, thus proving a conjecture on the $\vv$-number of powers of a graded ideal having linear powers, for the class of binomial edge ideals.
\end{abstract}

\maketitle

%

\section{Introduction}

An invariant called \textit{$\vv$-number} was introduced by Cooper et al. in \cite{cooper2020generalized}. 
We begin by recalling its definition:

Let $\mathcal{R}=K[x_1,\ldots,x_n]=\bigoplus_{d\geq 0}\mathcal{R}_d$ be a standard graded polynomial ring in $n$ variables over a field $K$ and $I\subset \mathcal{R}$ be a proper graded ideal. 
The \textit{$\mathrm{v}$-number} of $I$, denoted by $\mathrm v(I)$, is defined as
\[\mathrm v(I):=\min\{d\geq 0 : \text{ there exists } f\in \mathcal{R}_d\text{ and } \mathfrak{p}\in \mathrm{Ass}(I) ~\mbox{with}~ (I:f)=\mathfrak{p}\}.\]
For each $\mathfrak{p}\in \mathrm{Ass}(I)$, the $\mathrm v$-number of $I$ at $\mathfrak{p}$, called the \textit{local $\mathrm{v}$-number} of $I$ at $\mathfrak{p}$, denoted by $\mathrm v_\mathfrak{p}(I)$ and is defined as $\mathrm v_\mathfrak{p}(I):=\min\{d\geq 0 : \text{ there exists } f\in \mathcal{R}_d ~\mbox{with} ~(I:f)=\mathfrak{p}\}$. In this sense, we have $\vv(I)=\min\{\vv_{\mathfrak{p}}(I): \mathfrak{p}\in\mathrm{Ass}(I)\}$.

The notion of $\vv$-number has its foundation in coding theory. In particular, it was introduced to investigate the asymptotic behaviour of the minimum distance function of projective Reed-Muller-type codes. These codes play a crucial role in error correction and data transmission. However, beyond its coding-theoretic roots, the $\vv$-number has also revealed significant geometric implications, as noted in  \cite{jaramillo2023connected}. To date, several investigations have been conducted on the $\vv$-number, exploring its properties and applications:
\begin{itemize}
    \item $\vv$-number in coding theory \cite{cstpv20} and \cite{pv23};
	\item $\mathrm{v}$-number of monomial ideals in \cite{bm23}, \cite{civan23}, \cite{v-edge}, \cite{ksvgor23}, \cite{sahacover23} and \cite{ssvmon23};
	\item $\mathrm{v}$-number of binomial edge ideals  in \cite{ambhore2023v} and \cite{jaramillo2023connected};
	\item Asymptotic behaviour of the $\vv$-number in \cite{vp24}, \cite{bms24}, \cite{concav23}, \cite{ficsimon23}, \cite{fs23}, \cite{fsasymp24}, \cite{fd24} and \cite{kns24}.
\end{itemize}

In general computation of the $\vv$-number of a homogeneous ideal is a non-trivial task, particularly for non-monomial ideals.
In this paper, we explore the $\vv$-number of binomial edge ideals corresponding to various classes of graphs. Let $G$ be a simple graph on the vertex set $V(G)=[n]=\{1,\ldots,n\}$ with the edge set $E(G)$. Consider the polynomial ring $R=K[x_{1},\ldots,x_{n},y_{1},\ldots,y_{n}]$ over a field $K$. Then the \textit{binomial edge ideal} of $G$, denoted by $J_{G}$, is the graded ideal of $R$,
\[J_{G}=\big<f_{ij} = x_{i}y_{j}-x_{j}y_{i}\mid \{i,j\}\in E(G)\,\, \text{with}\,\, i<j\big>.\]

\medskip

While the $\vv$-number of monomial edge ideal has been studied in detail, that of binomial edge ideal is not at all well understood in terms of properties of the graphs and other associated algebraic invariants. For binomial edge ideals, the $\vv$-number has not been computed for even the simplest classes like cycles and paths. In \cite{ambhore2023v} and \cite{jaramillo2023connected}, the authors studied the local $\vv$-number of $J_G$ at the prime ideal corresponding to the empty cutset, $\vv_\emptyset(J_G)$. Jaramillo-Velez and Seccia proved that $\vv_{\emptyset}(J_G)$ is equal to the connecting domination number of $G$, \cite{jaramillo2023connected}. It is worth noting that this value can also be interpreted as the minimum completion number of $G$, as described in \cite{ambhore2023v}. 
It has been demonstrated in \cite[corollary 3.12]{ambhore2023v} that the $\vv$-number is additive for binomial edge ideals. Ambhore et al., in \cite{ambhore2023v}, proved that if $J_G$ is a Knutson ideal, then $\vv(J_G)\leq \vv(\ini_{<}(J_G))$. They also gave a characterization of graphs $G$ with $\vv(J_G)=1$. Moreover, they conjectured that $\vv(J_G)\leq\reg(\frac{R}{J_G})$ for any simple graph $G$, \cite[Conjecture 5.3]{ambhore2023v}, and proved the conjecture for several classes of graphs such as chordal graphs and whiskered graphs.
\medskip

In \cite{v-edge}, a combinatorial description of $\vv$-number of square-free monomial ideals was given. However, when it comes to binomial edge ideals, devising a combinatorial expression for the $\vv$-number proves to be quite challenging. Though for simple-looking graphs, the prediction of the $\vv$-number formula may be relatively simple, yet the process of formal proof remains challenging. This study aims to determine the precise formula for certain classes of binomial edge ideals and establish  upper bounds for some classes, with an attempt to anticipate the potential $\vv$-number. Additionally, an investigation is carried out on the pair ($\vv$-number, regularity) for edge ideals of graphs, which is then partially extended to include binomial edge ideals. Finally, we solve a conjecture given in \cite{bms24} for the class of binomial edge ideals. The organization of the paper is outlined as follows.
\medskip

Section \ref{sec:preli} recalls the necessary prerequisites to describe the rest part of the paper. In Section \ref{sec:closedcm}, we give a combinatorial description of all the local $\vv$-numbers of a Cohen-Macaulay binomial edge ideal with quadratic Gr\"{o}bner basis (see Theorem \ref{thm_1}). As an application, we get the exact formula of the $\vv$-number of such ideals in a simpler form:
\medskip

\noindent \textbf{Theorem \ref{thm_2}.} \textit{Let $G$ be a graph such that $J_G$ is Cohen-Macaulay and $J_G$ has a quadratic gr\"obner basis. Then $\mathrm{v}(J_G) =\big\lceil\frac{2(t-1)}{3}\big\rceil$, where $t$ is the number of maximal cliques in $G$. 
}
\medskip

\noindent As a consequence of Theorem \ref{thm_2}, we get $\vv(J_{P_n})=\big\lceil\frac{2(n-2)}{3}\big\rceil$, where $P_n$ denotes the path graph on $n$ vertices (see \Cref{cor_path_v}). In Section \ref{sec:v=2}, we first characterize all graphs whose binomial edge ideals have $\vv$-number $2$, \Cref{thm_3}. 
We give a sharp upper bound of the $\vv$-number of binomial edge ideals of binary trees (\Cref{cor_binary_v}) and cycles (\Cref{cor_cycle_v}). In this direction, we conjecture the following:
\medskip

\noindent \textbf{Conjecture \ref{conjcycle}. }
{\em Let $C_n$ denote the cycle graph on $n$ vertices and $B_n$ denote the binary tree of level $n$. Then
\begin{enumerate}
    \item $\mathrm{v}(J_{C_n}) = n - \lfloor\frac{n}{3} \rfloor=\lceil \frac{2n}{3}\rceil$ for all $n \geq 6$;

    \item $\mathrm{v}(J_{B_n}) = 2^{n-1} + \mathrm{v}(J_{B_{n-3}})$ for all $n \geq 3$.
\end{enumerate}
}

For several classes of graphs, it has been shown that $\vv(I(G))\leq \reg(R/I(G))$, where $I(G)$ denote the monomial edge ideal of $G$. In view of this connection between these two invariants, it is natural to ask:
\medskip

\noindent 
\textbf{Question:} {\em Given a pair of positive integers $(k, m)$ with $k \leq m$, does there exist a connected graph $G$ and an associated edge ideal $I$ in a polynomial ring $R$ such that $\vv(I) = k$ and $\reg(R/I) = m?$} 

\vskip 2mm
\noindent
In Section \ref{sec:pair}, we first answer this question affirmatively for monomial edge ideals, see \Cref{thm_edge_pair}. For binomial edge ideals, we show that if $m \geq 2k$, there exists a graph $G$ with $\vv(J_G) = k$ and $\reg(R/J_G) = m$:

\medskip

\noindent \textbf{Corollary \ref{cor_binom_pair}.}\textit{
Given any pair of integers $(k,m)$ such that $k \geq 1$ and $m \geq 2k$, there exists a connected graph $G$ such that $ \mathrm{v}(J_G) = k$ and $\reg (\frac{R}{J_G}) = m$. 
}
\medskip

\noindent 
Ficarra conjectured that if $I$ is a monomial ideal with linear powers, then $\vv(I^k) = \alpha(I)k-1$, where $\alpha(I)$ is the maximum degree of a minimal monomial generator. 
In \cite{bms24}, Biswas et al.  generalized this conjecture and proposed  
that for a graded ideal $I$ with linear powers, $\vv(I^k)$ attains its possible minimum value for all $k\geq 1$. In Section \ref{sec:conj}, we settle this conjecture for the class of binomial edge ideals (see \Cref{thm_vlp}).

\vskip 2mm \noindent
\textbf{Acknowledgements:} Deblina Dey acknowledge the financial support of Prime Minister Research Fellowship for her research. Kamalesh Saha would like to thank the National Board for Higher Mathematics (India) for the financial support through the NBHM Postdoctoral Fellowship. Again, Kamalesh Saha is partially supported by an Infosys Foundation fellowship.

\section{Preliminaries}\label{sec:preli}
We begin by assembling all notations and terminologies that will be followed throughout the paper. Any graph considered in this paper is finite and simple. Given a graph $G$, $V(G)$ and $E(G)$ denote the vertex set and edge set of $G$ respectively. For $u,v\in V(G)$, $u$ is said to be adjacent to $v$ if $\{u,v\}\in E(G)$. A path between two vertices $u$ and $v$ of $G$ is a sequence of vertices, $u, u_1, \ldots, u_k, v$, where any two consecutive vertices are adjacent, and $G$ is connected if there exists a path between any two vertices of $G$. Let $C(G)$ denote the collection of connected components of $G$ and $c(G) = |C(G)|$. Given any set $S\subseteq V(G)$, $G\setminus S$ denotes the induced subgraph of $G$ on the vertex set $V(G) \setminus S$. For any $v \in V(G)$, let $N_G(v) = \{u \in V(G) : \{u,v\} \in E(G)\}$,  $N_G[v] = N_G(v) \cup \{v\}$, and $\deg_G(v) = |N_G(v)|$.
A vertex $v$ is a cut vertex of $G$ if $c(G\setminus\{v\}) > c(G)$ and a set $S\subseteq V(G) $ is said to be a cut set if $c(G\setminus S) > c(G\setminus (S\setminus \{s\}))$ for every $s\in S$. By $\mathcal{C}(G)$, we denote the collection of all cut sets of $G$.\\
A clique $F$ of $G$ is a complete subgraph of $G$, and it is called maximal if there is no other clique in $G$ properly containing $F$. A vertex $v$ is called a free vertex if the induced subgraph on $N_G[v]$ is a clique. 

Let $G_1$ and $G_2$ be two subgraphs of $G$ such that $V(G) = V(G_1)\cup V(G_2)$ and  $G_1\cap G_2 \equiv K_m$ for some $m>0$. Then $G$ is called a {clique sum} of $G_1$ and $G_2$ along $K_m$ and we write $G = G_1 \cup_{K_m} G_2$. If $K_m = \{u\}$, where $u \in V(G)$, then we simply write $G= G_1\cup_u G_2$.
    A simple graph $G$ is a cone graph if there exist $v\in V(G)$ such that for every $u\neq v\in V(G)$, $\{u,v\} \in E(G)$.
Let $G$ be a simple graph with $V(G) = [n]$. Then $G$ is closed if there exist a ordering on $V(G)$ such that for all integers $1\leq i <j<k \leq n$, if $\{i,k\} \in E(G)$, then $\{i,j\} \in E(G)$ and $\{j,k\} \in E(G)$.


Let $G$ be a graph on $[n]$ and 
$R = K[x_1,x_2,\ldots,x_n,y_1,y_2,\ldots,y_n] = \oplus_{d\geq 0}R_d$ be the standard graded polynomial ring on $2n$ variables, where $K$ is a field. A polynomial $f\in R$ is homogeneous if $f\in R_d$ for some $d\geq 0$, and then $\deg(f) = d$. Given any $S\subseteq V(G)$, $m_S$ represents the ideal $\langle x_i,y_i: i\in S \rangle$ and for any $j\in V(G)$, $t_j$ is either $x_j$ or $y_j$.
On the polynomial ring $R$, we take the lexicographic order $<$, induced by the monomial ordering
\[ x_1>x_2> \cdots>x_n>y_1>y_2>\cdots > y_n.\]

\begin{definition}
Let $G$ be a simple graph on $[n]$ vertices. Then, the binomial edge ideal corresponding to $G$ is denoted by $J_G$ and defined by 
\[ J_G = \big<f_{ij} = x_{i}y_{j}-x_{j}y_{i} \in R\mid \{i,j\}\in E(G)\,\, \text{with}\,\, i<j\big>.\]
\end{definition}
The notion of binomial edge ideals was introduced by Herzog et al., \cite{herzog2010binomial} and, independently, by Othani \cite{ohtani2011graphs}. 
They proved that $J_G$ is a radical ideal over any field $K$, and they have described the minimal primes of $J_G$ combinatorially. Given any set $S\in \mathcal{C}(G)$, let 
 \[P_S = m_S + J_{\Tilde{G}_1} + J_{\Tilde{G}_2} + \cdots + J_{\Tilde{G}_{c(G\setminus S)}},\]
where $G_1, \ldots, G_{c(G\setminus S)}$ are the connected components of $G\setminus S$ and $\tilde{G}_i$ is the complete graph on $V(G_i)$, for all $i=1,\ldots, c(G\setminus S)$. Then $P_S(G)$ is a minimal prime of $J_G$ and every minimal primes are of this form \cite[Corollary 3.9]{herzog2010binomial}. For simplicity of notation, we use $P_S$ instead of $P_S(G)$ whenever $G$ is clear from the context.

\begin{definition}
    Let $u=u_0,u_1, \ldots, u_k=v$ be a path between two vertices $u,v \in V(G)$, where $u<v$. Then, this path will be an admissible path if 
    \begin{itemize}
        \item $u_i\neq u_j$, for $i\neq j$.
        \item for each $u_i, i\in [k-1]$, either $u_i> v$ or $u_i <u$.
        \item For any proper subset $\{u_{i_1},u_{i_2},\ldots, u_{i_r}\}$ of $\{u_1, \ldots, u_k\}$, the sequence $u, u_{i_1}, \ldots, u_{i_r}, v$ is not a path.
    \end{itemize}
\end{definition} 

\begin{proposition} \label{prop_bino_grobner}
\cite[Theorem 2.1]{herzog2010binomial} Let $G$ be a simple graph on $[n]$ and $R$ be equipped with the lexicographic order. 
Then the set \[\bigcup_{u<v} \Bigl\{ \big(\prod_{u_i>v} x_{u_i} \big) \big( \prod_{u_j <u}y_{u_j} \big) : u=u_0,u_1, \ldots, u_k=v ~\text{is an admissible path in $G$} \Bigr\}\] gives a reduced Gr$\ddot{o}$bner basis of $J_G$.
\end{proposition}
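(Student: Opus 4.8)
The plan is to verify directly the two defining features of a reduced Gr\"obner basis for the proposed set, call it $\mathcal{G}$. To each admissible path $\pi: u=u_0,u_1,\ldots,u_k=v$ with $u<v$ I attach the binomial $g_\pi=\mu_\pi\,(x_uy_v-x_vy_u)$, where $\mu_\pi=\bigl(\prod_{u_i>v}x_{u_i}\bigr)\bigl(\prod_{u_j<u}y_{u_j}\bigr)$ is the monomial recorded in the statement. Since $u<v$, the chosen lex order makes $x_uy_v$ the leading term of $x_uy_v-x_vy_u$, so $\ini_<(g_\pi)=\mu_\pi\,x_uy_v$ is squarefree. The paths of length one are precisely the edges, so every generator $f_{ij}$ lies in $\mathcal{G}$; hence once I show $\mathcal{G}\subseteq J_G$ it follows that $\langle\mathcal{G}\rangle=J_G$, and it remains to prove that $\mathcal{G}$ is a reduced Gr\"obner basis.

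First I would establish membership $g_\pi\in J_G$ using the three--term Pl\"ucker--type identity $x_b(x_ay_c-x_cy_a)=x_a(x_by_c-x_cy_b)+x_c(x_ay_b-x_by_a)$ (and its $y$--analogue) for the $2\times2$ minors built from the generic matrix with rows $x_1,\ldots,x_n$ and $y_1,\ldots,y_n$. Running an induction on the length $k$ and contracting the path one interior vertex at a time, this identity lets me write $\mu_\pi\,(x_uy_v-x_vy_u)$ as an $R$--combination of the edge binomials $f_{u_{i-1}u_i}$, with the freedom to choose at each interior vertex the $x$-- or $y$--version of the relation; selecting $x$ for the vertices $>v$ and $y$ for the vertices $<u$ produces exactly $\mu_\pi$. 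This step is routine.

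The core of the argument is Buchberger's criterion: since $\mathcal{G}$ is finite, it suffices to show that for every pair $\pi,\pi'$ whose leading monomials share a variable, the $S$--polynomial $S(g_\pi,g_{\pi'})$ reduces to $0$ modulo $\mathcal{G}$. I would stratify the verification according to how the two paths overlap, as dictated by the common variables of $\ini_<(g_\pi)$ and $\ini_<(g_{\pi'})$: the shared variables force the paths either to meet at an endpoint or to pass through a common interior vertex whose defining inequality ($>v$ or $<u$) is consistent with both paths. In each configuration I would produce the reduction explicitly by \emph{splicing} the two paths at the overlap into a new walk between suitable endpoints, reducing that walk to an admissible path, and using the three--term relation to witness that the resulting element of $\mathcal{G}$ cancels $S(g_\pi,g_{\pi'})$. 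I expect this case analysis to be the main obstacle, and in particular the step of reducing a spliced walk to a genuinely \emph{admissible} path---so that the reducer really lies in $\mathcal{G}$---is where the third axiom of admissibility (no proper subpath is again a path) does the essential work.

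Finally, for reducedness I would check that no monomial of any $g_\pi$ is divisible by $\ini_<(g_{\pi'})$ for some $\pi'\neq\pi$; the two monomials of $g_\pi$ are $\mu_\pi x_uy_v$ and $\mu_\pi x_vy_u$. A divisibility $\ini_<(g_{\pi'})\mid\mu_\pi x_uy_v$ would place all vertices of $\pi'$ among those of $\pi$ with matching inequalities, and tracking the endpoints shows this yields a proper subpath of $\pi$ joining its endpoints, contradicting the third admissibility axiom unless $\pi'=\pi$; the same inequality bookkeeping rules out divisibility of the trailing monomial $\mu_\pi x_vy_u$. Hence $\mathcal{G}$ is a reduced Gr\"obner basis of $J_G$.
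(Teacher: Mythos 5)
First, a remark on the statement itself: as printed, the displayed set lists only the coefficient monomials $\bigl(\prod_{u_i>v}x_{u_i}\bigr)\bigl(\prod_{u_j<u}y_{u_j}\bigr)$ and omits the binomial factor $f_{uv}$; you correctly restored the intended elements $g_\pi=\mu_\pi f_{uv}$. Note also that the paper gives no proof of this proposition --- it is quoted from \cite[Theorem 2.1]{herzog2010binomial} --- so the only benchmark is the proof in that reference, and your overall strategy (membership via the relation $x_bf_{ac}=x_af_{bc}+x_cf_{ab}$ and its $y$-analogue, Buchberger's criterion via a case analysis of overlapping paths, then a divisibility check for reducedness) is exactly the strategy used there. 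Your membership argument is indeed routine, and your reducedness argument does close up: if $\ini_<(g_{\pi'})$ divides $\mu_\pi x_uy_v$, the sign conditions on interior vertices force $u'=u$ and $v'=v$, at which point the third admissibility axiom applies; and divisibility into the trailing monomial $\mu_\pi x_vy_u$ is impossible since it would force $u'\geq v>u\geq v'$, contradicting $u'<v'$.

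The genuine gap is the Buchberger step, which you explicitly defer (``I would stratify \dots\ I expect this case analysis to be the main obstacle''). That case analysis \emph{is} the content of the theorem --- it occupies several pages in the source --- and none of it is carried out here. Moreover, your description of the stratification is too optimistic: a variable $x_w$ can occur in $\ini_<(g_\pi)$ because $w$ is the smaller endpoint of $\pi$, and in $\ini_<(g_{\pi'})$ because $w$ is an \emph{interior} vertex of $\pi'$ lying above its larger endpoint, so a shared vertex need not play consistent roles in the two paths and need not satisfy matching inequalities. These mixed endpoint/interior configurations are precisely the delicate cases, and showing that the spliced walk can be pruned to a genuinely admissible path whose associated element of $\mathcal{G}$ effects the reduction is where all the work lies. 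As it stands, the proposal is a correct and well-chosen plan with two of the three components executed, but the central component is only announced, not proved.
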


\begin{remark}\cite[Theorem 1.1]{herzog2010binomial}
For a finite simple graph $G$, $G$ is closed if and only if \{$f_{ij}$ ~:~ $\{i,j\} \in E(G)\text{ with } i<j\}$ is a Gr\"obner basis of $J_G$.
\end{remark}




\section{The $\mathrm{v}$-number of Cohen-Macaulay binomial edge ideals}\label{sec:closedcm}

We say that a graph $G$ is Cohen-Macaulay if $R/J_G$ is Cohen-Macaulay. The first study on the classification of Cohen-Macaulay binomial edge ideals was done by Ene, Herzog and Hibi in \cite{ene2011cohen}, where they classified all the Cohen-Macaulay closed graphs. 
Let $G$ be a Cohen-Macaulay closed graph, and $F_1, \ldots, F_t$ be the maximal cliques of $G$. Then $F_i$'s can be ordered in such a way that $F_i\cap F_{i+1}$ is singleton for all $i=1,\ldots,t-1$, and $F_i\cap F_j = \emptyset$ if $i<j$ and $j \neq i+1$, for all $i = 1,\ldots, t-1$. Let $F_i\cap F_{i+1} = \{v_i\}$ and $\Tilde{C}(G) = \{v_i: i\in [t-1]\}$. Each of these $v_i$ is a cut vertex of $G$, and any cut set of $G$ must be a subset of $\Tilde{C}(G)$. The graph $H$ in \Cref{fig:cmclosed} is an example of a Cohen-Macaulay closed graph.

\begin{figure}[H]
    \centering
\begin{tikzpicture}[line cap=round,line join=round,>=triangle 45,x=1.5cm,y=1cm]
\draw (2.,3.)-- (1.,2.);
\draw (1.,2.)-- (2.,1.);
\draw (2.,1.)-- (3.,2.);
\draw (3.,2.)-- (2.,3.);
\draw (2.,3.)-- (2.,1.);
\draw (1.,2.)-- (3.,2.);
\draw (3.,2.)-- (5.,2.);
\draw (3.,2.)-- (4.,3.);
\draw (4.,3.)-- (5.,2.);
\draw (5.,2.)-- (7.,2.);
\draw (7.,2.)-- (8.,3.);
\draw (7.,2.)-- (8.,1.);
\draw (8.,1.)-- (9.,2.);
\draw (9.,2.)-- (8.,3.);
\draw (8.,3.)-- (8.,1.);
\draw (7.,2.)-- (9.,2.);
\draw (9.,2.)-- (11.,2.);
\draw (1.5,2.7) node[anchor=north west] {$F_1$};
\draw (3.8,2.7) node[anchor=north west] {$F_2$};
\draw (5.8,2.7) node[anchor=north west] {$F_3$};
\draw (7.5,2.7) node[anchor=north west] {$F_4$};
\draw (9.6,2.7) node[anchor=north west] {$F_5$};
\draw (2.8,1.9) node[anchor=north west] {$v_1$};
\draw (4.9,1.9) node[anchor=north west] {$v_2$};
\draw (6.8,1.9) node[anchor=north west] {$v_3$};
\draw (8.9,1.9) node[anchor=north west] {$v_4$};
\begin{scriptsize}
\draw [fill=black] (2.,3.) circle (1.5pt);
\draw [fill=black] (1.,2.) circle (1.5pt);
\draw [fill=black] (2.,1.) circle (1.5pt);
\draw [fill=black] (3.,2.) circle (1.5pt);
\draw [fill=black] (5.,2.) circle (1.5pt);
\draw [fill=black] (4.,3.) circle (1.5pt);
\draw [fill=black] (7.,2.) circle (1.5pt);
\draw [fill=black] (8.,3.) circle (1.5pt);
\draw [fill=black] (8.,1.) circle (1.5pt);
\draw [fill=black] (9.,2.) circle (1.5pt);
\draw [fill=black] (11.,2.) circle (1.5pt);
\end{scriptsize}
\end{tikzpicture}
\caption{A Cohen-Macaulay closed graph $H$}
    \label{fig:cmclosed}
\end{figure}
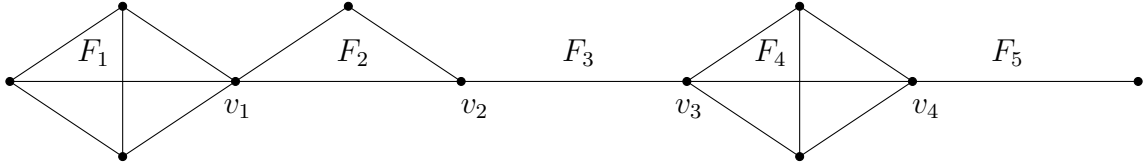

Our initial target is to give a combinatorial expression for the local $\mathrm{v}$-numbers of Cohen-Macaulay closed graphs. Using that, we then compute the $\mathrm{v}$-number of these graphs. 
Let $S$ be a cut set of $G$ and $P_S$ be the corresponding minimal prime of $J_G$. Suppose \[P_S = m_S + J_{\Tilde{G}_1} + J_{\Tilde{G}_2} + \cdots + J_{\Tilde{G}_{c(G\setminus S)}},\]
where $G_1, \ldots, G_{c(G\setminus S)}$ are the connected components of $G\setminus S$.
Since the induced subgraph on $\Tilde{C}(G)$ is a path and $S   \subseteq \Tilde{C}(G)$, for any $s\in S$, at most two cut vertices of $G$ can belong to $N_G(s) \setminus S$. Write $S= S_0 \cup S_1 \cup S_2,$ where
\begin{eqnarray*}
S_2 & = & \{ s\in S: \vert (N_G(s) \setminus S) \cap \Tilde{C}(G)\vert = 2 \},   \\ 
S_1 & = & \{ s\in S: \vert (N_G(s) \setminus S) \cap \Tilde{C}(G)\vert = 1 \}, \\
S_0 & = & \{ s\in S: \vert (N_G(s) \setminus S) \cap \Tilde{C}(G)\vert = 0 \}.
\end{eqnarray*}

For  $s \in S_2$, we set $(N_G(s) \setminus S) \cap \Tilde{C}(G) = \{v_{s_1}, v_{s_2} \}$ and for $s\in S_1$, set $(N_G(s) \setminus S) \cap \Tilde{C}(G) = \{v_s\}$.
Let $f_S = \prod_{s\in S_2} f_{v_{s_1} v_{s_2}}  \prod_{s\in S_1} f_{v_{s} s_j}  \prod_{s\in S_0} f_{s_i s_j},$
where $s_i, s_j \in (N_G(s) \setminus S)\setminus \Tilde{C}(G)$ and $\{v_{s_1}, v_{s_2}\}, \{v_{s},s_j\}, \{s_i,s_j\} \notin E(G)$ for every $s\in S$. 
Let $f_{D_S} = \prod_{v\in D_S}x_v $, where
\[ D_S = \{ v \in \Tilde{C}(G): v\in V(G\setminus S) ~\text{and}~ v \notin N_G(s) ~\text{ for any}~ s\in S \}.\]
\noindent
We illustrate the above notation in an example.
\begin{example}{\rm
Let us take two cut set $S= \{ v_2,v_4\}$ and $S' = \{ v_1,v_3,v_4\}$ of the graph $H$ in \Cref{fig:cmclosed}. Then for the cut set $S$, $S_2 = \{v_2\}$, $S_1 = \{v_4\}$ and $S_0 = \emptyset$. For $S'$, $S'_2 = \emptyset$, $S'_1 = \{ v_1, v_3\}$ and $S'_0 = \{ v_4\}$. Here $D_S = \emptyset$ and $D_{S'} =\emptyset$. If  $S''=\{v_2\}$, then $D_{S''} = \{v_4\}$. 
}
\end{example}

The following lemms helps us getting an upper bound for a local $\vv$-number.
\begin{lemma}\label{lemma_2}
Let $G$ be a Cohen-Macaulay closed graph and $f_S, f_{D_S}$ be defined as above. Then, \[ (J_G:f_Sf_{D_S}) = P_S.\]
\end{lemma}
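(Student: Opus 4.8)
The plan is to exploit that $J_G$ is a radical ideal whose minimal primes are exactly the $P_T$ for $T$ ranging over $\mathcal{C}(G)$ together with $\emptyset$, so that $J_G=\bigcap_T P_T$ and, for any homogeneous $g$,
\[
(J_G:g)\;=\;\bigcap_{T}(P_T:g)\;=\;\bigcap_{\,T\,:\,g\notin P_T}P_T .
\]
Applying this to $g=f_Sf_{D_S}$ and using that distinct minimal primes of $J_G$ are pairwise incomparable, the identity $(J_G:f_Sf_{D_S})=P_S$ becomes equivalent to the two claims: (a) $f_Sf_{D_S}\notin P_S$; and (b) $f_Sf_{D_S}\in P_T$ for every cut set $T\neq S$ (including $T=\emptyset$). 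Indeed, (a) keeps $P_S$ in the intersection while (b) makes every other $P_T$ drop out, and since no $P_T$ with $T\neq S$ contains $P_S$, the intersection then collapses to $P_S$ exactly.

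For (a), since $P_S$ is prime it suffices to check that each factor of $f_Sf_{D_S}$ lies outside $P_S$; as $P_S$ is the vanishing ideal of its variety $V(P_S)$, this can be done by evaluation. Each monomial factor $x_v$ with $v\in D_S$ avoids $P_S$ because $v\notin S$ allows a point of $V(P_S)$ with $x_v\neq 0$. Each binomial factor $f_{ab}$ of $f_S$ comes from a non-edge $\{a,b\}$ with $a,b\in V(G\setminus S)$, and the construction of $f_S$ places $a,b$ on opposite sides of a cut vertex $s\in S$, so they lie in distinct components of $G\setminus S$; evaluating at a point of $V(P_S)$ with $(x_a,y_a)=(1,0)$ and $(x_b,y_b)=(0,1)$, which is admissible precisely because $a,b\notin S$ and lie in distinct components, gives $f_{ab}=1$, so $f_{ab}\notin P_S$.

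For (b) I argue the contrapositive: if $f_Sf_{D_S}\notin P_T$ then $T=S$. As $P_T$ is prime, $f_Sf_{D_S}\notin P_T$ forces every factor to avoid $P_T$. First, for each $s=v_k\in S$ the attached binomial factor $f_{ab}$ joins two vertices that, in the chain-of-cliques structure of a Cohen-Macaulay closed graph, sit in the flanking cliques $F_k,F_{k+1}$ and are connected by an explicit length-two path through $s$; since $f_{ab}\notin P_T$ requires $a,b\notin T$ and $a,b$ in distinct components of $G\setminus T$, this separation forces $s\in T$, whence $S\subseteq T$. Conversely, any $w\in T\setminus S$ is a cut vertex $w\in\Tilde{C}(G)$, and either $w\in D_S$, so $x_w$ is a factor lying in $m_T\subseteq P_T$, or $w$ is adjacent to some $s\in S$ and hence appears as an endpoint of the binomial factor attached to $s$, again putting that factor in $m_T\subseteq P_T$; either case contradicts $f_Sf_{D_S}\notin P_T$. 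Thus $T\subseteq S$, giving $T=S$.

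The main obstacle is the combinatorial bookkeeping underlying (b): one must verify, case by case over $s\in S_0\cup S_1\cup S_2$, that $s=v_k$ is the \emph{unique} cut vertex separating the two endpoints of its binomial factor (using that consecutive cut vertices are adjacent while non-consecutive ones are not, and that the chosen endpoints lie in $F_k$ and $F_{k+1}$), and to confirm the well-definedness of $f_S$, i.e. that the required non-adjacent pairs of neighbours actually exist, in possibly degenerate cliques. Once these local separation facts are in place, both (a) and (b) reduce to the single principle that membership of a binomial $f_{ab}$ in a prime $P_T$ is governed entirely by whether $a,b\in T$ and by the component structure of $G\setminus T$.
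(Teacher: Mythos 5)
Your argument is correct, but it takes a genuinely different route from the paper's. Both proofs establish $f_Sf_{D_S}\notin P_S$; the difference is in the other half. The paper proves $P_S\cdot f_Sf_{D_S}\subseteq J_G$ directly, generator by generator: for $x_s,y_s$ it uses the identity $x_sf_{s_is_j}=x_{s_i}f_{ss_j}-x_{s_j}f_{ss_i}$, and for a generator $f_{lk}\in J_{\tilde G_i}\setminus J_G$ it invokes the membership $\bigl(\prod_{r}t_{v_{i_r}}\bigr)f_{lk}\in J_G$ along the path of cut vertices joining $l$ to $k$, then checks that the required prefactor divides $f_Sf_{D_S}$. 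You instead verify the logically equivalent condition that $f_Sf_{D_S}\in P_T$ for every cut set $T\neq S$, via $(J_G:g)=\bigcap_{T:\,g\notin P_T}P_T$ and incomparability of the minimal primes; this trades the explicit syzygy computations for purely combinatorial separation statements ($S\subseteq T$ because the endpoints of each binomial factor are joined by the length-two path through its $s$; $T\subseteq S$ because any $w\in T\setminus S$ lies in $\tilde C(G)$ and is either in $D_S$, so $x_w\in m_T$ kills $f_{D_S}$, or is a neighbour of some $s\in S_1\cup S_2$ and hence an endpoint of that $s$'s binomial factor, killing $f_S$). Two small remarks: the ``uniqueness of the separating cut vertex'' you flag as the main obstacle is not actually needed, since your two inclusions $S\subseteq T$ and $T\subseteq S$ already close the loop; and for claim (a) the fact that the two endpoints of each binomial factor lie in distinct components of $G\setminus S$ does hold, because two non-adjacent neighbours of $s=v_k$ must sit in the two flanking cliques $F_k\setminus\{s\}$ and $F_{k+1}\setminus\{s\}$, which $v_k$ alone already separates (also prefer the direct membership test for $f_{ab}\notin P_S$ over the evaluation argument, which implicitly assumes $K$ algebraically closed).
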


\begin{proof}
To prove the required equality, it is sufficient to show that $f_Sf_{D_S}\notin P_S$ and $P_S(f_Sf_{D_S})\subseteq J_G$.
By our construction, for each $s \in S$, none of $f_{v_{s_1}v_{s_2}}$, $f_{v_{s}s_j}$ or $ f_{s_is_j}$ belongs to $P_S$. If $v\in D_S$, then $x_v\notin P_S$ as $v\notin S$, which implies that $f_{D_S} \notin P_S$. Hence, $f_Sf_{D_S} \notin P_S.$

Now, we need to prove that $P_S(f_Sf_{D_S})\subseteq J_G$. This is equivalent to showing that for any generator $g$ of $P_S$, $gf_{S}f_{D_S}$ belongs to $J_G$. For any $s\in S$, if $s_i,s_j\in N_G(s)$ are such that $\{s_i,s_j\} \notin E(G)$, then $x_sf_{s_is_j}\in J_G$ and $y_sf_{s_is_j} \in J_G$. Hence, $x_sf_S, y_sf_S \in J_G ~\text{for all}~ s\in S.$ Now, suppose $g = f_{lk} \in J_{\Tilde{G}_i}$ for some $i$ such that $f_{lk} \notin J_G$, i.e., $\{l,k\} \notin E(G)$. From the structure of $G$ it follows that there must be a path $l,v_{i_1},v_{i_2}\ldots, v_{i_m},k$, where $v_{i_r}\in \Tilde{C}(G)\cap G_i$ for all $r \in [m]$. Then we know that 
\[ \left(\prod_{r=1}^m t_{v_{i_r}}\right)f_{lk} \in J_G, \text{where}~ t_{v_j}\in \{x_{v_j},y_{v_j}\}\]

\noindent Note that for each $r\in [m]$, $v_{i_r}$ either belongs to $D_S$ or belongs to $N_G(s) \setminus S$ for some $s\in S_1 \cup S_2$. If $v_{i_r} \in N_G(s) \setminus S$ and $s\in S_2$, then $v_{i_r} \in \{v_{s_1}, v_{s_2} \}$, and if $s\in S_1$, then $ v_{i_r} = v_s$. Let
\[ h=\prod_{v_{i_r}\in D_S}x_{v_{i_r}}  \prod_{v_{i_r}\in N_G(s) \setminus S ~\&~ s \in S_2}f_{{v_{s_1}} v_{s_2}}  \prod_{v_{i_r}\in N_G(s) \setminus S ~\&~ s\in S_1  } f_{v_s s_j}.\]
Here, $s_j \in N_G(s) \setminus S$ is the same neighbour vertex of $s$ that we choose in the construction of the polynomial $f_S$. By our construction, $h$ divides $f_Sf_{D_S}$. Each term of $h$ is of the form $(\prod_{r=1}^m t_{v_{i_r}})z$, where $z$ is a monomial. Hence, 
$hf_{lk} \in J_G$ which implies $ f_Sf_{D_S}f_{lk} \in J_G.$
This completes the proof.
\end{proof}
To proceed further, the use of the initial ideal is going to play a crucial role. The following proposition is one of the key tools in the proof of most of the later theorems. 

\begin{proposition}\label{prop_1}
Let $G$ be a simple graph and $f\in R_m$ be such that $(J_G:f)= P_S$ for some $P_S \in \mathrm{ Min}(J_G)$. Then there exists $g\in R_m$ so that $\ini_{<}(g) \notin \ini_{<}(J_G)$ and $(J_G:g) = P_S$.
\end{proposition}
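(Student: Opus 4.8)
The plan is to replace $f$ by its normal form modulo $J_G$, exploiting two facts: that $J_G$ is a graded ideal, so that normal forms of homogeneous elements stay homogeneous of the same degree, and that subtracting an element of $J_G$ from $f$ does not change the colon ideal.

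First I would record the relevant graded decomposition. Because $J_G$ is homogeneous, so is its initial ideal $\ini_<(J_G)$, and the standard monomials (the monomials not lying in $\ini_<(J_G)$) form a $K$-basis of $R/J_G$. Passing to degree-$m$ components, this gives a direct sum $R_m = (J_G)_m \oplus N_m$, where $N_m$ is the $K$-span of the degree-$m$ standard monomials. Writing $f = h + g$ according to this splitting, we get $h \in (J_G)_m \subseteq J_G$ and $g \in N_m$; in particular $g$ is homogeneous of degree $m$ and is a $K$-linear combination of monomials, none of which lies in $\ini_<(J_G)$. (Equivalently, $g$ is the remainder of $f$ on division by the reduced Gr\"obner basis of $J_G$; the role of the graded decomposition is precisely to guarantee that this remainder is again homogeneous of degree $m$.)

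Next I would check $g \neq 0$ and deduce the initial-term condition. Since $(J_G:f) = P_S$ is a minimal prime, it is a proper ideal, so $1 \notin (J_G:f)$, i.e. $f \notin J_G$; hence $f$ is not congruent to $0$ modulo $J_G$ and $g \neq 0$. Then $\ini_<(g)$ is a well-defined monomial, and as every monomial occurring in $g$ is a standard monomial, its largest one $\ini_<(g)$ satisfies $\ini_<(g) \notin \ini_<(J_G)$. Finally, because $f - g = h \in J_G$, for every $r \in R$ we have $rf \in J_G \iff rg \in J_G$ (as $rf - rg = rh \in J_G$), so $(J_G:g) = (J_G:f) = P_S$. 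Thus $g$ is the desired polynomial.

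As for difficulty, there is essentially no serious obstacle: the only point needing care is that the normal form of a homogeneous polynomial remains homogeneous of the same degree, which is exactly what the splitting $R_m = (J_G)_m \oplus N_m$ supplies. The whole content is the standard observation that replacing $f$ by its normal form modulo $J_G$ changes neither its residue class in $R/J_G$ (hence neither the colon ideal) nor its degree, while forcing the leading term out of $\ini_<(J_G)$.
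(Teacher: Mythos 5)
Your proof is correct and is essentially the same argument as the paper's: the paper explicitly iterates the reduction ``subtract a homogeneous element of $J_G$ with the same leading term'' until the leading monomial leaves $\ini_<(J_G)$, which is precisely the computation of the normal form you invoke via the decomposition $R_m = (J_G)_m \oplus N_m$. Your packaging is a bit cleaner (all monomials of $g$, not just the leading one, are standard), but the underlying idea and the verification that the colon ideal is preserved are identical.
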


\begin{proof}
Since $(J_G:f)= P_S$, $f\notin J_G$. Now, if $\ini_{<}(f) \notin \ini_{<}(J_G)$, then take $g=f$.
If $\ini_{<}(f) \in \ini_{<}(J_G)$, then there exists $f_1\in J_G$, which is homogeneous and $\ini_{<}(f_1) = \ini_{<}(f)$. If $g_1 = f - f_1$, then $\ini_{<}(f) > \ini_{<}(g_1)$ and $(J_G:g_1)= P_S$. If $\ini_{<}(g_1)\notin \ini_{<}(J_G)$, then take $g = f-f_1$. If $\ini_{<}(g_1)\in \ini_{<}(J_G)$, then again choose a homogeneous polynomial $f_2 \in J_G$ such that $\ini_{<}(f_2) = \ini_{<}(g_1)$. In this case, we take $g_2 = g_1 - f_2 = f- f_1 -f_2$. Then $\ini_{<}(g_2)< \ini_{<}(g_1) <\ini_{<}(f)$ and $(J_G:g_2)= P_S$. This process stops after a finite number of stages as the number of monomials less than $\ini_{<}(f)$ is finite. Let us assume that the process stops after $k$ many steps. Then we get $f_1,f_2,\ldots f_k \in J_G$ for some $k>0$, such that each $f_i\in R_m$, $g = f - f_1 - f_2 - \cdots -f_k \in R_m$ and $\ini_{<}(g) \notin \ini_{<}(J_G)$ as $f\notin J_G$. Hence, we obtain the required $g$ with $(J_G:g) = P_S$.
\end{proof}

The following theorem provides a combinatorial expression of local $\mathrm{v}$-numbers of closed Cohen-Macaulay binomial edge ideal. We fix the labelling on $V(G)$ such that $G$ is closed, and on $R$, we take lexicographic ordering.

\begin{theorem} \label{thm_1}
Let $G$ be a Cohen-Macaulay closed graph. Then for any cut set $S$ of $G$, $\mathrm{v}_S(J_G) = 2|S| + |D_S|$.
    
\end{theorem}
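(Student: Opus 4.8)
The inequality $\vv_S(J_G)\le 2|S|+|D_S|$ is immediate from \Cref{lemma_2}, since $\deg(f_Sf_{D_S})=2|S|+|D_S|$. So the content is entirely the reverse inequality, which I would prove by a leading-term analysis. Choose a homogeneous $g$ with $(J_G:g)=P_S$ and $\deg g=\vv_S(J_G)$, and invoke \Cref{prop_1} to assume $M:=\ini_<(g)\notin\ini_<(J_G)$. Because $G$ is closed, the binomials $f_{ij}$ form the reduced Gr\"obner basis, so $\ini_<(J_G)=\langle x_iy_j:\{i,j\}\in E(G),\,i<j\rangle$ and $M$ is a \emph{standard} monomial, i.e.\ divisible by no $x_iy_j$ with $\{i,j\}\in E(G)$. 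As $\deg M=\deg g$, it suffices to exhibit $2|S|+|D_S|$ distinct variables dividing $M$. Throughout I use $\ini_<(hg)=\ini_<(h)\,M$ for every $h$, together with the interval structure of a Cohen--Macaulay closed graph: with the closing labelling the maximal cliques are consecutive blocks $F_p=[v_{p-1},v_p]$ meeting at the cut vertices $v_1<\cdots<v_{t-1}$, and $\{a,b\}\in E(G)$ iff $a,b$ lie in a common block.

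For the contribution of $S$: for each $s=v_i\in S$ both $x_sg,\,y_sg\in J_G$, so $x_sM,\,y_sM\in\ini_<(J_G)$. Dividing the edge monomial that witnesses each membership and using that $M$ is standard (so the extra variable $x_s$, resp.\ $y_s$, must be the one completing the divisibility) forces a neighbour $q>s$ with $y_q\mid M$ and a neighbour $p<s$ with $x_p\mid M$; from the block structure necessarily $q\in(v_i,v_{i+1}]$ and $p\in[v_{i-1},v_i)$. As $v_i$ ranges over $S$ these intervals are pairwise disjoint on each side, so this yields $|S|$ distinct $x$-variables and $|S|$ distinct $y$-variables dividing $M$.

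The crux is the extra $|D_S|$, and this is where I expect the real work. Fix $v_d\in D_S$; then $v_{d-1},v_{d+1}\notin S$, so $v_d$ is interior to its component $C$ of $G\setminus S$ and $v_d\pm1\in C$. Since $v_d$ lies strictly between $v_d-1$ and $v_d+1$, which sit in distinct blocks, $\{v_d-1,v_d+1\}$ is a non-edge of $G$; hence $f_{v_d-1,\,v_d+1}$ is a generator of $J_{\widetilde{C}}\subseteq P_S$ and $f_{v_d-1,\,v_d+1}\,g\in J_G$, giving $x_{v_d-1}y_{v_d+1}M\in\ini_<(J_G)$. The key point is that the only neighbour of $v_d-1$ larger than it is $v_d$, and the only neighbour of $v_d+1$ smaller than it is $v_d$ (this uses that the blocks are intervals, and holds whether or not $v_d\pm1$ are themselves cut vertices). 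Consequently the standardness of $M$ collapses the divisibility to this single vertex and forces $y_{v_d}\mid M$ or $x_{v_d}\mid M$.

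Finally I would assemble the count. A label $v_d$ with $v_d\in D_S$ can never be one of the $S$-witness labels: being a cut vertex it could only fall in a range $[v_{i-1},v_i)$ or $(v_i,v_{i+1}]$ as the endpoint $v_{i-1}$ or $v_{i+1}$, which would place a spine-neighbour of $v_d$ into $S$, contradicting $v_d\in D_S$. Hence the $|D_S|$ variables produced in the previous paragraph have indices disjoint from those of the $2|S|$ variables coming from $S$, and are distinct across $D_S$; counting $x$- and $y$-variables separately gives $\deg M\ge 2|S|+|D_S|$, which completes the lower bound and hence the equality. The main obstacle is the third paragraph: choosing the non-edge pair $\{v_d-1,v_d+1\}$ so that the leading-term divisibility collapses exactly to $v_d$, and then verifying the index-disjointness needed to add the $S$- and $D_S$-contributions without overcounting.
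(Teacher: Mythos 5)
Your proof is correct, and its skeleton matches the paper's: the upper bound comes from \Cref{lemma_2}, and the lower bound is obtained by using \Cref{prop_1} to reduce to a standard monomial $M=\ini_<(f)$ and then exhibiting $2|S|+|D_S|$ pairwise distinct variables dividing $M$. Where you genuinely differ is in how the two families of variables are extracted. For $s\in S$ the paper passes to the colon ideal $(J_G:x_s)=J_{G_s}$, invokes closedness of $G_s$ to compare $\ini_<(J_{G_s})$ with $\ini_<(J_G)$, and concludes that $x_{s_i}y_{s_j}\mid M$ for a non-adjacent pair of neighbours of $s$; you instead multiply $M$ directly by $x_s$ and by $y_s$ and read off, from the quadratic Gr\"obner basis, one $x$-variable strictly below $s$ and one $y$-variable strictly above $s$ — the same count of $2|S|$, obtained with less machinery. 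For $v_d\in D_S$ the paper computes $(J_G:f_{a_1a_2})=m_{\{v_d\}}+J_{G\setminus\{v_d\}}$ via \cite[Proposition 5.2]{bolognini2021cohen} and deduces $t_{v_d}\mid M$; your choice of the non-edge $\{v_d-1,v_d+1\}$, combined with the observation that $v_d$ is the unique neighbour of $v_d-1$ above it and the unique neighbour of $v_d+1$ below it, collapses the divisibility directly and avoids that external result altogether. (Your case analysis is sound: exactly one of $x_a,y_b$ in the witnessing edge monomial can fail to divide $M$, since $\{v_d-1,v_d+1\}\notin E(G)$ rules out both failing, and either case forces $t_{v_d}\mid M$.) The final disjointness bookkeeping is equivalent to the paper's appeal to $D_S\cap N_G(S)=\emptyset$. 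The net effect of your route is a more self-contained argument, relying only on the interval structure of Cohen--Macaulay closed graphs and the quadratic Gr\"obner basis, at the price of somewhat more delicate tracking of which variable completes each divisibility.
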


\begin{proof}
The degree of the polynomial $f_Sf_{D_S}$ is $2|S|+|D_S|$. So, from \Cref{lemma_2}, we get
\[\mathrm{v}_S(J_G) \leq 2|S| + |D_S|. \]
Now, we prove the reverse inequality. Let us assume that $(J_G:f) = P_S$, for some homogeneous polynomial $f$ of degree $d$. We will show that $d\geq 2|S| + |D_S|$. Due to \Cref{prop_1}, we can assume that $\ini_{<}(f) \notin \ini_{<}(J_G)$.
Let $v_i\in D_S$, recall $\{v_i\} = F_i \cap F_{i+1}$. Then we choose two vertices $a_1, a_2$ from $F_i$ and $F_{i+1}$, respectively, which are different from $v_i$. Additionally, $a_1 \neq v_{i-1}$, if $|F_i| \geq 3$ and $a_2 \neq v_{i+1}$, if $|F_{i+1}| \geq 3$. Clearly $\{a_1,a_2\} \notin E(G)$, but $f_{a_1a_2} \in P_S$. Since $(J_G:f) = P_S$, 
\[f  \in  (J_G: f_{a_1a_2}) = \bigcap_{P_T \in Min(J_G), f_{a_1a_2} \notin P_T} P_T  = \bigcap_{T\in \mathcal{C}(G), v\in T} P_T = m_{\{v\}} + J_{G \setminus \{v\}},
\]
where the last equality follows from
\cite[Proposition 5.2]{bolognini2021cohen}.
Since $\ini_{<} (f)\notin \ini_{<} (J_G)$, $\ini_{<} (f)\notin \ini J_{G \setminus \{v\}}$ as  well. Therefore, $\ini_{<} (f) \in m_{\{v\}}$. Hence $t_v$ divides $\ini_{<} (f)$ for each $v \in D_S$, where $t_v \in \{x_v, y_v\}$.
%
Therefore $\prod_{v\in D_S} t_v$ divides $\ini_{<} (f)$.

Since $(J_G : f) = P_S$, for any $s\in S$, $f\in (J_G:x_s)= J_{G_s}$ \cite[Proposition 3.1]{ambhore2023v}.
Recall that $F_i$'s are the maximal cliques of $G$, where $i\in[t]$, and for every $s\in S$, $\{s\} = F_j \cap F_{j+1}$, for some $j\in [t-1]$. Then the graph $G_s$ has $t-1$ maximal cliques which are $F_1, \ldots, F_{j-1}, F_j \cup F_{j+1}, F_{j+2}, \ldots, F_t$. Since $G$ is closed, it follows from \cite[Theorem 2.2]{ene2011cohen} that $G_S$ is closed. Therefore,
\[ \ini_{<}(J_{G_s}) = \ini_{<} (J_G) + ( x_{s_i}y_{s_j} : s_i<s_j, \{s_i,s_j\} \subseteq N_G(s)\text{ and } \{s_i,s_j\}\notin E(G)).\]
Since $\ini_{<} (f) \notin \ini_{<} (J_G)$ and $\ini_{<} (f) \in \ini_{<} (J_{G_s})$, for each $s\in S$,
$x_{s_i}y_{s_j}$ divides $\ini_{<} (f)$ for some $\{s_i,s_j\} \subseteq N_G(s)$ with $\{x_{s_i},x_{s_j}\} \not\in E(G)$.  
Suppose $s, s' \in S$ such that $s< s'$ and $\{s_i,s_j\}\subseteq N_G(s)$, $\{s_i',s_j'\}\subseteq N_G(s')$ with $\{s_i,s_j\}, \{s_i',s_j'\}\not\in E(G)$. Since $G$ is a Cohen-Macaulay closed graph, it follows from \cite[Theorem 3.1]{ene2011cohen} that $s\in F_r$ and $s' \in F_{r+1}$, for some $r\in [t-1]$ such that  $s_i<s_j\leq s_i' < s_j'$. Therefore, $\gcd(\ini_{<} (f_{s_is_j}), \ini_{<} (f_{s_i's_j'})) = 1$. 
Thus, \[ \prod _{s\in S} x_{s_i}y_{s_j} | \ini_{<} (f).\] 
Since $D_S \cap N_G(S) = \emptyset,$, it follows that $\prod_{v\in D_S} t_v\cdot \prod _{s\in S} x_{s_i}y_{s_j}$ divides $\ini_{<} (f).$ Hence,
 $d\geq 2|S| + D_S$. Since $f$ was chosen arbitrarily, we have $\vv_S(J_G)\geq 2|S|+|D_S|$. In particular, $\vv_S(J_G)= 2|S|+|D_S|$.
\end{proof}

Now, we are ready to prove the main theorem of this section.

\begin{theorem} \label{thm_2}
Let $G$ be a Cohen-Macaulay closed graph with $t$ maximal cliques. Then \[\mathrm{v}(J_G) = (t-1) - \big\lfloor \frac{t-1}{3}\big\rfloor =\big\lceil\frac{2(t-1)}{3}\big\rceil \]
\end{theorem}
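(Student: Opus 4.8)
The plan is to reduce the global $\vv$-number to a combinatorial optimization over cut sets, using the exact formula $\vv_S(J_G) = 2|S| + |D_S|$ from Theorem \ref{thm_1}. Since $\vv(J_G) = \min\{\vv_S(J_G) : S \in \mathcal{C}(G)\}$ and every cut set $S$ is a subset of $\tilde{C}(G) = \{v_1,\ldots,v_{t-1}\}$ (which induces a path), the whole problem becomes: choose a subset $S$ of the $(t-1)$ cut vertices arranged along a path so as to minimize $2|S| + |D_S|$, where $D_S$ counts the cut vertices \emph{not} in $S$ and \emph{not adjacent} to any element of $S$. First I would make the adjacency along $\tilde{C}(G)$ explicit: consecutive cut vertices $v_i, v_{i+1}$ share the clique $F_{i+1}$, so they are adjacent, and thus a chosen vertex $v_i \in S$ ``covers'' itself and its path-neighbors $v_{i-1}, v_{i+1}$ (removing them from the count toward $D_S$).

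With this reformulation, the key observation is that minimizing $2|S| + |D_S|$ over the path $v_1,\ldots,v_{t-1}$ is a domination-type covering problem. If $|S| = k$, then the chosen vertices can dominate at most $3k$ of the $t-1$ cut vertices (each chosen vertex covers itself and at most two neighbors), so the number of uncovered vertices is $|D_S| \geq (t-1) - 3k$, giving $2|S| + |D_S| \geq 2k + \max\{0,\, (t-1)-3k\}$. The next step is to analyze this lower-bound function of $k$: for $3k \leq t-1$ it equals $(t-1)-k$, which decreases in $k$, so I would push $k$ as large as possible while still saturating the covering, i.e.\ up to $k = \lfloor (t-1)/3 \rfloor$, yielding the value $(t-1) - \lfloor (t-1)/3 \rfloor$. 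For larger $k$ the function $2k$ only increases, so no improvement is possible. This pins the minimum at $(t-1) - \lfloor (t-1)/3 \rfloor$.

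To complete the argument I would exhibit a cut set achieving this bound, thereby turning the lower bound into equality. The natural choice is to take $S$ to be a maximal set of cut vertices spaced three apart along the path $v_1,\ldots,v_{t-1}$ (for instance $v_2, v_5, v_8, \ldots$), a standard efficient-domination pattern. I would verify that such an $S$ is genuinely a cut set of $G$ (any subset of $\tilde{C}(G)$ whose removal disconnects the appropriate components qualifies, and one must check the minimality condition $c(G \setminus S) > c(G \setminus (S \setminus \{s\}))$ for each $s$), and then compute $|D_S| = (t-1) - 3|S|$ exactly when the spacing is achieved, so that $2|S| + |D_S| = (t-1) - |S|$ is minimized at $|S| = \lfloor (t-1)/3 \rfloor$. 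Finally I would confirm the closed-form identity $(t-1) - \lfloor (t-1)/3 \rfloor = \lceil 2(t-1)/3 \rceil$, which is an elementary floor/ceiling manipulation.

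The main obstacle I anticipate is the boundary bookkeeping in the extremal construction: at the two ends of the path a chosen cut vertex covers only one neighbor instead of two, and one must check whether the optimal spaced set should start at $v_1$, $v_2$, or $v_3$ to avoid wasting coverage, and handle the residue classes of $t-1$ modulo $3$ separately. One must also be careful that the constructed $S$ really satisfies the definition of a cut set (not merely a set of cut vertices) and that $D_S$ is computed correctly, since $D_S$ is defined via the \emph{graph} neighborhood $N_G(s)$ rather than the path neighborhood --- though for Cohen-Macaulay closed graphs these coincide along $\tilde{C}(G)$, a point worth stating explicitly. Everything else reduces to the clean covering estimate and a routine case analysis on $t-1 \bmod 3$.
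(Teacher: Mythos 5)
Your proposal is correct and follows essentially the same route as the paper: both rest entirely on the formula $\vv_S(J_G)=2|S|+|D_S|$ from Theorem \ref{thm_1}, recast the minimization as a domination problem on the path induced by $\tilde{C}(G)$ (each $s\in S$ covering at most two neighbouring cut vertices, hence $|D_S|\geq (t-1)-3|S|$), and realize the bound with the spaced cut set $\{v_2,v_5,v_8,\ldots\}$ of size $\lfloor (t-1)/3\rfloor$. Your uniform lower bound $2k+\max\{0,(t-1)-3k\}$ is, if anything, slightly cleaner than the paper's case split into $t\geq 4$, $t=2,3$, and $t=1$, and you correctly flag the two points that need checking (that the extremal $S$ is genuinely a cut set, and that $G$-adjacency among cut vertices coincides with path adjacency).
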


\begin{proof}
From \Cref{thm_1}, $\mathrm{v}_S(J_G) = 2|S| + |D_S|$, for every $S\in \mathcal{C}(G)$. We now identify a cut set $S_0$ such that $\mathrm{v}_{S_0}(J_G)$ is the minimum. By construction,
\[|D_S| = |\Tilde{C}(G)| - |S| - |N_S|,\]
where $N_S = (N_G(S) \setminus S) \cap \tilde{C}(G)$. 
Therefore,
\begin{eqnarray*}
\vv_S(J_G) & = & 2|S| + |\Tilde{C}(G)| - |S| - |N_S| \\
& = & |\Tilde{C}(G)| + |S| - |N_S| \\
& = & (t-1) + |S| - |N_S|.
\end{eqnarray*}
To get a minimum value of the above expression, $|S|-|N_S|$ needs to be minimum. Note that for every $s\in S$, there can be at most two cut vertices which are neighbors of $s$, i.e., $|N_S| \leq 2|S|$. Since $G$ is Cohen-Macaulay, $\vv_\emptyset(J_G) = (t-1)$. 

\begin{enumerate}
    \item Suppose $t\geq 4$. Let $\mathcal{P}$ denote the collection of the sets $S\in \mathcal{C}(G)$ such that $|N_S| = 2|S|$. We claim that $\mathcal{P} \neq \emptyset$. Then for every $S\in \mathcal{P}$, $\vv_S(J_G) = (t-1) - |S|$. So, our goal is to find out a cut set $S_0$ of maximum cardinality from $\mathcal{P}$, because then $\vv_{S_0}(J_G) = (t-1) - |S_0|$ is going to be the minimum of all $\vv_S(J_G)$. Recall the notation $\Tilde{C}(G) = \{v_i: \{v_i\} = F_i \cap F_{i+1}, i\in [t-1]\}$. A cut set $S = \{v_{i_1}, v_{i_2}, \ldots, v_{i_r}\}$ with $i_j <i_{j+1}$, belongs to $\mathcal{P}$ if and only if $i_1 >1$, $i_r < (t-1)$ and $i_{j+1} - i_j \geq 3$. We construct $S_0$ in such a way that ${i_1} = 2$ and $i_{j+1} - i_j = 3$. Let 
    \[S_0=\{v_k ~:~ k=3i+2, i\geq 0, k < t-1\}.\]
    So, $S_0 \in \mathcal{P}$ and by the construction of $S_{0}$, this is a set with maximum cardinality in $\mathcal{P}$. 
    Clearly, $|S_0| = \lfloor \frac{ t-1}{3}\rfloor$.
    Hence,
    \[\mathrm{v}(J_G) = \mathrm{v}_{S_0}(J_G) = (t-1) - \bigg\lfloor \frac{t-1}{3}\bigg\rfloor=\bigg\lceil \frac{2(t-1)}{3}\bigg\rceil\]

  \item If $t=2,3$, then  $|N_S| \leq 1$ for all non-empty $S\in \mathcal{C}(G)$. This implies $|S| - |N_S| \geq 0$. So $\mathrm{v}_S(J_G) \geq t-1$. Since $\vv_{\emptyset}(J_G) = t-1 $, we have
\[ \mathrm{v}(J_G) = \vv_{\emptyset}(J_G) = t-1 =\bigg\lceil \frac{2(t-1)}{3}\bigg\rceil. \]

  \item If $t=1$, i.e., $G$ is a complete graph, then 
  \[ \vv(J_G) = 0 = \bigg\lceil \frac{2(t-1)}{3}\bigg\rceil. \]
  \end{enumerate}
\end{proof}

\begin{corollary}\label{cor_path_v}
     Let $P_n$ be the path graph on $n$ vertices. Then $\mathrm{v}(J_{P_n}) = \big\lceil\frac{2(n-2)}{3}\big\rceil.$
\end{corollary}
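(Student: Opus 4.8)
The plan is to derive \Cref{cor_path_v} directly from \Cref{thm_2} by checking that the path graph $P_n$ satisfies both hypotheses of the theorem and by counting its maximal cliques. First I would recall that $P_n$ is the graph with vertex set $[n]$ and edges $\{i,i+1\}$ for $i=1,\ldots,n-1$, using exactly this natural labelling. The two facts I must verify are that $J_{P_n}$ is Cohen-Macaulay and that $P_n$ is a closed graph, so that \Cref{thm_2} applies.

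For closedness, I would simply exhibit the ordering $1<2<\cdots<n$ on $V(P_n)$ and check the defining condition: if $\{i,k\}\in E(P_n)$ with $i<k$, then since the only edges are between consecutive vertices we must have $k=i+1$, leaving no index $j$ with $i<j<k$, so the implication holds vacuously. Hence $P_n$ is closed. For the Cohen-Macaulay property, I would invoke the classification of Ene, Herzog and Hibi, \cite{ene2011cohen}: a path is a particular instance of a Cohen-Macaulay closed graph (its maximal cliques are the edges, glued consecutively along single vertices, which is precisely the structure described before \Cref{fig:cmclosed}). Alternatively one can note $P_n$ is a chordal graph in which consecutive maximal cliques meet in exactly one cut vertex, matching the hypothesis of \Cref{thm_2} verbatim.

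The only genuine computation is counting maximal cliques. Since $P_n$ has no triangles, every maximal clique is a single edge, and there are exactly $n-1$ edges; moreover $F_i=\{i,i+1\}$ and $F_{i+1}=\{i+1,i+2\}$ meet in the singleton $\{i+1\}$, and $F_i\cap F_j=\emptyset$ for $|i-j|\geq 2$, so the cliques are already arranged in the order required by the theorem. Thus $t=n-1$. Substituting into \Cref{thm_2} gives
\[
\mathrm{v}(J_{P_n}) = \Big\lceil\frac{2(t-1)}{3}\Big\rceil = \Big\lceil\frac{2((n-1)-1)}{3}\Big\rceil = \Big\lceil\frac{2(n-2)}{3}\Big\rceil,
\]
which is the claimed formula.

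I do not anticipate a serious obstacle here, since the corollary is essentially a specialization of \Cref{thm_2}; the main point is careful bookkeeping to confirm that $P_n$ really is a Cohen-Macaulay closed graph with the clique structure the theorem demands, and that the edge-count $t=n-1$ is correct. The one edge case worth a remark is $n=1$ or $n=2$: for $n=2$ we get $t=1$ (a single edge is a complete graph) and the formula returns $\lceil 0\rceil=0$, agreeing with case (3) of \Cref{thm_2}; for $n\geq 3$ the general argument applies without modification. If a fully self-contained proof were wanted rather than a citation to \cite{ene2011cohen}, I would instead note that a path is a special case of the graph $H$ depicted in \Cref{fig:cmclosed} where each maximal clique is a $K_2$, and the Cohen-Macaulayness then follows from the cited classification of Cohen-Macaulay closed graphs.
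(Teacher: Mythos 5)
Your proposal is correct and follows exactly the paper's route: the paper likewise observes that $P_n$ is a Cohen-Macaulay closed graph with $t=n-1$ maximal cliques and then specializes Theorem \ref{thm_2}. Your version just spells out the closedness check, the clique count, and the small-$n$ edge cases in more detail than the paper's two-line proof.
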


\begin{proof}
By \cite{herzog2010binomial}, $P_n$ is a Cohen-Macaulay closed graph with the number of maximal cliques equal to $n-1$. Hence, the assertion follows from \Cref{thm_2}.
\end{proof}
For an arbitrary graph $G$, $\vv_\emptyset (J_G)$ is expressed in terms of the connected domination number of $G$ in \cite{ambhore2023v} and \cite{jaramillo2023connected}. Computing the connected domination number of a graph is an NP-complete problem even for the subclass like bipartite graphs. However, for a Cohen-Macaulay graph $G$, we give a simpler combinatorial expression for $\vv_\emptyset (J_G)$ as follows.

\begin{proposition}\label{prop_CM_v} 
    Let $G$ be a Cohen-Macaulay graph and $r$ be the number of cut vertices of $G$. Then $\vv_\emptyset(J_G) = r$.
\end{proposition}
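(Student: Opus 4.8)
The plan is to establish the equality $\vv_\emptyset(J_G) = r$ for a general Cohen-Macaulay graph by combining the known lower bound machinery from the earlier results with a concrete upper-bound construction. Recall that $P_\emptyset = J_{\tilde G_1} + \cdots + J_{\tilde G_{c(G)}}$, where the $\tilde G_i$ are the complete graphs on the connected components of $G$. Since $G$ is Cohen-Macaulay, I know that $\vv_\emptyset(J_G) = (t-1)$ whenever $G$ is a Cohen-Macaulay \emph{closed} graph with $t$ maximal cliques (as used inside the proof of \Cref{thm_2}), and in that setting $t-1$ is precisely the number of cut vertices; so the proposition is a genuine generalization dropping the closedness hypothesis. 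The first step is therefore to replace the clique-counting argument with one that counts cut vertices directly, using that for a Cohen-Macaulay graph the relevant combinatorial quantity $\vv_\emptyset(J_G)$ (which equals the connected domination number by \cite{jaramillo2023connected}) admits a clean description.

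First I would prove the upper bound $\vv_\emptyset(J_G) \le r$ by exhibiting an explicit $f$ of degree $r$ with $(J_G : f) = P_\emptyset$. Let $w_1,\dots,w_r$ be the cut vertices of $G$. The natural candidate is $f = \prod_{i=1}^r t_{w_i}$, a product of one variable (either $x_{w_i}$ or $y_{w_i}$) per cut vertex, with the choice of variable dictated by the same local ordering considerations as in \Cref{lemma_2}. To verify $(J_G:f) = P_\emptyset$ one checks, as in that lemma, that $f \notin P_\emptyset$ (clear, since no $t_{w_i} \in P_\emptyset$) and that $P_\emptyset \cdot f \subseteq J_G$: every generator of $P_\emptyset$ is some $f_{lk}$ with $l,k$ in a common component of $G$, and multiplying by the cut-vertex monomials along a connecting path clears it into $J_G$, exactly as in the closed case. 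The characterization of Cohen-Macaulay graphs forces every pair of vertices in a component to be joined by a path whose interior lies in the cut-vertex set, which is what makes $f$ work.

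For the lower bound $\vv_\emptyset(J_G) \ge r$, I would mimic the degree-counting argument of \Cref{thm_1}. Suppose $(J_G : f) = P_\emptyset$ with $\deg f = d$; by \Cref{prop_1} we may assume $\ini_<(f) \notin \ini_<(J_G)$. For each cut vertex $w$ I want to force a variable $t_w$ to divide $\ini_<(f)$, and then argue these divisibilities involve distinct variables so that $d \ge r$. The tool is the same colon/intersection identity used in \Cref{thm_1}: multiplying by a suitable $f_{a_1 a_2}$ associated to a non-edge across $w$ pushes $f$ into $m_{\{w\}} + J_{G\setminus\{w\}}$, and since $\ini_<(f) \notin \ini_< J_{G\setminus\{w\}}$ one concludes $\ini_<(f) \in m_{\{w\}}$, i.e.\ $t_w \mid \ini_<(f)$. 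The identity $(J_G : f_{a_1a_2}) = m_{\{w\}} + J_{G\setminus\{w\}}$ for Cohen-Macaulay graphs is the version of \cite[Proposition 5.2]{bolognini2021cohen} invoked in \Cref{thm_1}.

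The main obstacle I anticipate is the lower bound in the non-closed setting, specifically two points. First, choosing, for each cut vertex $w$, a non-edge $\{a_1,a_2\}$ with $a_1,a_2$ in distinct components of $G\setminus\{w\}$ and $f_{a_1a_2}\in P_\emptyset$ but $f_{a_1a_2}\notin P_T$ only for cut sets $T$ containing $w$ — this required the clique structure in \Cref{thm_1}, and without closedness I must argue purely from the Cohen-Macaulay hypothesis that such a separating non-edge exists at every cut vertex (a cut vertex by definition disconnects, so a non-edge between the separated pieces is available, and its $f_{a_1a_2}$ lies in exactly the primes $P_T$ with $w\in T$). Second, and harder, is showing the forced variables $t_{w_1},\dots,t_{w_r}$ are genuinely \emph{distinct} divisors of $\ini_<(f)$ so their product divides it; for distinct cut vertices the variables $t_{w_i}$ are automatically distinct, so unlike \Cref{thm_1} this coprimality is immediate, which actually simplifies matters. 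Hence I expect the real work to be verifying the separating-non-edge existence and confirming that the Bolognini--Macchia--Strazzanti colon formula applies verbatim to arbitrary Cohen-Macaulay $G$ rather than only closed ones.
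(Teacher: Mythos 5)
Your approach is genuinely different from the paper's, and as written both halves have gaps; the missing ingredient in each case is \emph{accessibility}, which you never invoke. The paper's proof is short: Cohen--Macaulayness implies $G$ is accessible \cite[Theorem 3.5]{bolognini2021cohen}, so (iterating the definition) every non-empty cut set contains a cut vertex; by \cite[Theorem 3.6]{ambhore2023v}, $\vv_\emptyset(J_G)$ is the least degree of a generator of $\bigcap_{S\in\mathcal{C}(G)}m_S$, and accessibility collapses this intersection to $\bigcap_{v\ \text{cut vertex}}m_{\{v\}}$, all of whose minimal generators have degree $r$. In your upper bound, the assertion that ``the characterization of Cohen--Macaulay graphs forces every pair of vertices in a component to be joined by a path whose interior lies in the cut-vertex set'' is unsupported: there is no combinatorial characterization of Cohen--Macaulay binomial edge ideals to appeal to, and this path property is essentially equivalent to what you are proving (it says the cut vertices form a connected dominating set, i.e.\ $\vv_\emptyset(J_G)\le r$), so it cannot simply be assumed. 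The property is true, but the honest route to $(J_G:\prod_v t_v)=P_\emptyset$ is the radical decomposition $(J_G:f)=\bigcap_{f\notin P_S}P_S$ together with accessibility, which forces $\prod_v t_v\in m_S\subseteq P_S$ for every $S\neq\emptyset$.

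The lower bound contains a step that fails as stated. You claim that for an arbitrary non-edge $\{a_1,a_2\}$ separated by the cut vertex $w$, the binomial $f_{a_1a_2}$ avoids exactly the minimal primes $P_T$ with $w\in T$, so that $(J_G:f_{a_1a_2})=\bigcap_{T\ni w}P_T=m_{\{w\}}+J_{G\setminus\{w\}}$. That identity requires $a_1,a_2\notin T$ for \emph{every} cut set $T$ containing $w$; otherwise $f_{a_1a_2}\in m_T\subseteq P_T$, the prime $P_T$ drops out of the intersection, $(J_G:f_{a_1a_2})$ is strictly larger than $m_{\{w\}}+J_{G\setminus\{w\}}$, and you can no longer conclude $t_w\mid\ini_<(f)$. (Taking instead a $T$-remote pair can fail in the other direction: in $P_4$ with $w=2$ the non-edge $\{1,4\}$ is also separated by the cut set $\{3\}$.) This is exactly why the proof of \Cref{thm_1} insists on choosing $a_1\neq v_{i-1}$ and $a_2\neq v_{i+1}$ inside the cliques; in a general Cohen--Macaulay graph the existence of such a pair --- neighbours of $w$ in distinct components of $G\setminus\{w\}$ avoiding every cut set through $w$ --- is not established and would need a separate argument. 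Given that the whole statement falls out of \cite[Theorem 3.6]{ambhore2023v} plus accessibility in a few lines, the initial-ideal machinery of \Cref{prop_1} and \Cref{thm_1} is also much heavier than what is needed here.
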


\begin{proof}
Since $G$ is Cohen-Macaulay, $G$ is accessible \cite[Theorem 3.5]{bolognini2021cohen}. It follows from \cite[Theorem 3.6] {ambhore2023v} that $\vv_{\emptyset}(J_G) = \text{$\mathrm{min}$}\{\deg(m): m ~ \text{is a generator of}~ \bigcap_{S\in \mathcal{C}(G)} m_S\}$. Since $G$ is accessible, every cut set contains a cut vertex \cite[Definiton 2.2]{bolognini2021cohen}. Thus, $\bigcap_{S\in \mathcal{C}(G)} m_S = \bigcap_{\{v\} \in \mathcal{C}(G)}m_{\{v\}}$, which implies that the minimum degree of a generator of $\bigcap_{S\in \mathcal{C}(G)} m_S$ is the number of cut vertices of $G$. Hence, $\vv_\emptyset(J_G) = r$. 
\end{proof}

\begin{remark}{\rm
The converse of the above proposition is not correct. Let us take $G$ to be a star graph with at least four vertices, and $x$ be the only vertex of $G$ with $\deg_G(x) > 1$. Then $G$ is a cone graph. So, according to \cite[Theorem 3.20]{ambhore2023v},$\vv(J_G) = \vv_\emptyset (J_G) = 1 = |\{u\in V(G)~:~ \text{ $u$ is a cut vertex}\} |$. But It follows from \cite[Lemma 3.1]{herzog2010binomial} $J_G$ is not unmixed, so, $G$ is not Cohen-Macaulay. 
   }
\end{remark}

\section{$\mathrm{v}(J_G)=2$ and expected $\vv$-number of cycles and binary trees
}\label{sec:v=2}

In this section, we first characterize all graphs whose binomial edge ideals have $\vv$-number equal to two. Then, we discuss the behaviour of the $\vv$-number for decomposable graphs. Also, we give a sharp upper bound of the $\vv$-number for binary trees and cycles.\par 

Since $\vv$-number is additive on connected components, \cite[Corollary 3.12]{ambhore2023v}, it is enough to consider only connected graphs for the classification of binomial edge ideals with $\vv$-number equal to two.

\begin{theorem} \label{thm_3}
Let $G$ be a connected graph. Then $\mathrm{v}(J_G) = 2$ if and only if $G$ is not a cone graph and satisfies one of the following conditions:

\begin{enumerate}
    \item There exist vertices $u, v$ with $\{u,v\} \in E(G)$ such that $N_G(u) \cup N_G(v) = V(G)$.

    \item  There exist vertices $u, v$ with $\{u,v\} \notin E(G) $ such that $N_G(u)\cap N_G(v)$ is a non-empty cut set of $G$ that disconnects $u$ and $v$. If $G_i,G_j \in C(G\setminus (N_G(u)\cap N_G(v)))$ are components containing $u$ and $v$ respectively, then $G_i = \text{Cone}(u,G_i\setminus \{u\})$, $G_j=\text{Cone}(v,G_j\setminus \{v\})$ and all other connected components of $G\setminus (N_G(u)\cap N_G(v))$  are complete graphs.
\end{enumerate}
\end{theorem}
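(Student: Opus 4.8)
The plan is to exploit that $J_G$ is radical, so that $\mathrm{Ass}(J_G)=\mathrm{Min}(J_G)=\{P_S : S\in\mathcal{C}(G)\cup\{\emptyset\}\}$, and that for any homogeneous $f\notin J_G$ one has $(J_G:f)=\bigcap_{f\notin P_S}P_S$. Thus $(J_G:f)=P_{S_0}$ for a single minimal prime exactly when $f\notin P_{S_0}$ while $f\in P_S$ for every other $S$. Since $\vv(J_G)=0$ iff $G$ is complete, and by the characterization in \cite{ambhore2023v} $\vv(J_G)=1$ precisely for non-complete cone graphs, I may assume throughout that $G$ is neither complete nor a cone, so $\vv(J_G)\ge 2$, and it remains to show $\vv(J_G)=2$ iff (1) or (2) holds. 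The central tool is a description of the degree-$2$ part: as $f\in P_\emptyset=J_{\widetilde G}$ for the complete graph $\widetilde G$ on $[n]$, any candidate $f\in R_2$ can be written as $f=x^{\mathsf T}\Lambda y=\sum_{a<b}\lambda_{ab}f_{ab}$ for a skew-symmetric matrix $\Lambda=(\lambda_{ab})$. Reducing modulo $m_S$ then yields the splitting criterion: $f\in P_S$ iff $\lambda_{ab}=0$ whenever $a,b\in V(G)\setminus S$ lie in distinct components of $G\setminus S$.

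For the backward direction I treat the two cases separately. Under (1) the edge $\{u,v\}$ is a connected dominating set of size two, so the connected domination number is $2$ (it cannot be $1$ as $G$ is not a cone), whence $\vv_\emptyset(J_G)=2$ by the theorem of Jaramillo-Vélez and Seccia \cite{jaramillo2023connected}; combined with $\vv(J_G)\ge 2$ this gives $\vv(J_G)=2$. Under (2) I take $f=f_{uv}$ for the non-edge $\{u,v\}$; by the colon formula and the fact that $f_{uv}\notin P_S$ exactly when $S$ separates $u$ and $v$, it suffices to show that $W=N_G(u)\cap N_G(v)$ is the unique cut set separating $u$ and $v$, for then $(J_G:f_{uv})=P_W$ and $\vv_W(J_G)\le 2$. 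Every separator must contain $W$, since each $w\in W$ yields the path $u,w,v$; and any additional vertex $s$ lying in a component of $G\setminus W$ is inessential as a cut vertex—if $s$ lies in the cone over $u$ then that component minus $s$ stays connected through the apex $u$, and if $s$ lies in a complete component then that component minus $s$ stays complete—contradicting that a cut set strictly containing $W$ is genuine.

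For the forward direction, suppose $\vv(J_G)=2$, witnessed by $f\in R_2$ with $(J_G:f)=P_{S_0}$; by \Cref{prop_1} I may assume $\ini_<(f)\notin\ini_<(J_G)$. If $S_0=\emptyset$, then $\vv_\emptyset(J_G)=2$, so the connected domination number is $2$, a connected dominating set of size two is an edge $\{u,v\}$ with $N_G(u)\cup N_G(v)=V(G)$, and (1) holds. If $S_0\neq\emptyset$, I write $f=x^{\mathsf T}\Lambda y$ and invoke the splitting criterion: since $f\notin P_{S_0}$ there is a pair $\{u,v\}$ with $\lambda_{uv}\neq 0$ split by $S_0$, and since $f\in P_S$ for every other $S$ this pair is split by no cut set other than $S_0$. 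One then checks that $\{u,v\}$ is a non-edge, that $N_G(u)\cap N_G(v)\subseteq S_0$, and that $S_0$ is the unique minimal vertex separator of $u$ and $v$.

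The main obstacle is the final step of the forward direction: promoting "uniquely separated pair'' to the full conclusion (2), namely that $S_0=N_G(u)\cap N_G(v)$ and that the components of $u$ and of $v$ in $G\setminus S_0$ are cones over $u$ and $v$ while all other components are complete. This is exactly the converse of the rigidity used in the backward direction, and I expect to prove it by contraposition: if some vertex of $S_0$ fails to be a common neighbour of $u$ and $v$, or if one of the relevant components fails to be a cone (respectively complete), then I can modify $S_0$ along an offending non-adjacency to produce a second minimal $u$--$v$ separator that is still a genuine cut set, contradicting uniqueness. Making this construction work uniformly—pinning down the apex of each cone, and controlling the interaction of $S_0$ with several components simultaneously—is the delicate part of the argument.
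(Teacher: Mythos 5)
Your setup is sound: radicality gives $(J_G:f)=\bigcap_{f\notin P_S}P_S$, any quadric in $P_\emptyset$ is $\sum_{a<b}\lambda_{ab}f_{ab}$ for a skew-symmetric $\Lambda$, and the resulting splitting criterion ($f\in P_S$ iff $\lambda_{ab}=0$ for every pair $a,b\notin S$ separated by $S$) is correct. The backward direction goes through: case (1) matches the paper's use of minimal completion/connected dominating sets, and in case (2) your route — showing $N_G(u)\cap N_G(v)$ is the \emph{unique} cut set separating $u$ and $v$ because any strictly larger separator contains an inessential vertex — is a legitimate (and slightly more explicit) alternative to the paper's direct verification of both inclusions in $(J_G:f_{uv})=P_S$. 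The forward direction with $S_0=\emptyset$ is also fine.

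The genuine gap is exactly where you flag it: in the forward direction with $S_0\neq\emptyset$ you never prove the structural content of (2) — that $S_0=N_G(u)\cap N_G(v)$, that the components of $G\setminus S_0$ containing $u$ and $v$ are cones over $u$ and $v$, and that every other component is complete. This is the heart of the theorem, and ``I expect to prove it by contraposition\dots the delicate part'' is a plan, not a proof. As stated the plan is not obviously executable: you only know that every pair with $\lambda_{ab}\neq 0$ is split by no cut set other than $S_0$, you have not fixed which such pair to call $\{u,v\}$ when $\Lambda$ has several nonzero entries, and from each failure of the cone/completeness condition you must manufacture a second \emph{genuine} cut set (every element essential, in the paper's sense) that still separates $u$ from $v$. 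The paper closes precisely this step by a different mechanism: it normalizes $f$ via \Cref{prop_1} so that $\ini_{<}(f)\notin\ini_{<}(J_G)$, uses $f\in(J_G:x_s)=J_{G_s}$ for each $s\in S$ to conclude $\ini_{<}(f)=x_uy_v$ for a canonical non-edge $\{u,v\}$ with $S= N_G(u)\cap N_G(v)$, and then multiplies $f$ by $f_{lu}$ (resp. $f_{lk}$) for vertices $l$ in the component of $u$ (resp. pairs $l,k$ in the remaining components) and reads off from the admissible-path Gr\"obner basis of \Cref{prop_bino_grobner} that $\{u,l\}$ (resp. $\{l,k\}$) must be an edge. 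You would need either to carry out your contraposition in full or to import an initial-ideal argument of this kind to complete the proof.
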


\begin{proof}Let $\vv(J_G) = 2$. 
First assume that $\mathrm{v}(J_G) = \mathrm{v}_\emptyset (J_G) = 2$.
Then, by \cite[Theorem 3.20]{ambhore2023v}, $G$ is not a cone graph. Let $\{u,v\}$ be a minimal completion set (equivalently, a minimal connected dominating set) of $G$. Then by the definition of completion set, $\{u,v\}\in E(G)$ and $N_G(u) \cup N_G(v) = V(G)$. Hence $G$ satisfies $(1)$. 

Suppose $\mathrm{v}(J_G) =2$ and $\mathrm{v}_\emptyset(J_G) > 2$. In this case, we show that $G$ satisfies $(2)$.
Since $\vv(J_G) = 2$, there exist $f\in R_2$ and a non-empty cut set $S$ such that $( J_G: f) = P_S.$
By \Cref{prop_1}, we may assume that $\ini_{<} (f) \notin \ini_{<} (J_G)$. For each $s\in S$,
$f\in (J_G:x_s) = J_{G_s}$ so  that $\ini_{<} (f) \in \ini_{<}(J_{G_s})$.
Since $\ini_{<} (f)$ has degree 2, it must be of the form $x_uy_v$, where $\{u,v\} \in E(G_s) \setminus E(G)$. Therefore  $u,v\in \cap_{s\in S} N_G(s)$, and hence $S\subseteq N_G(u) \cap N_G(v)$.
If $u$ and $v$ are in the same connected component of $G\setminus S$, then $f_{uv} \in P_S$. This implies that $f_{uv}f \in J_G$. Since $x_uy_v$ divides both the initial terms, we get $x_u^2y_v^2 \in \ini_{<} (J_G)$. Hence, $x_uy_v \in \ini_{<} (J_G)$ as  $\ini_{<} (J_G)$ is radical and this contradicts the assumption that $\{u,v\} \notin E(G).$ Therefore, $u$ and $v$ are in distinct connected components of $G\setminus S$. This proves that $S = N_G(u) \cap N_G(v).$  Now, let $G_i \in C(G\setminus S)$ be the component containing $u$. For any $l\in V(G_i)$, if $u \neq l$, then $f_{lu} \in P_S$.  Proceeding as in the previous case, we get,
\[x_ly_ux_uy_v \in \ini_{<} (J_G) \hspace{0.3cm} \text{or} \hspace{0.3cm} x_uy_lx_uy_v \in \ini_{<} (J_G) , \]
depending on $l<u$ or $u<l$ respectively. Then there must be a generator of $\ini_{<}({J_G})$ that divides one of the above monomials, i.e., there exists an admissible path in $G$ on the vertex set $\{u,v,l\}$.
Since $v\notin G_i$ and $\{u,v,l\}\cap S=\emptyset$, the only possible admissible path among the vertices $u,v$, and $l$ is the edge $\{u,l\}$. So, $\{u,l\} \in E(G)$, for all $l\in V(G_i)$. Hence $G_i = \text{Cone} ( u, G_i\setminus \{u\})$. Similarly if $G_j \in C(G\setminus S)$ is such that $v \in V(G_j)$, then $G_j = \text{Cone}(v, G_j\setminus \{v\})$.
Let $G_r \in C(G\setminus S)$ be a component not containing $u$ or $v$. Then for any $l,k \in V(G_r) $, such that $l<k$, $f_{lk}\in P_S$. Therefore $x_ly_kx_uy_v\in \ini_{<} (J_G)$.
Since $\{u,v\} \notin E(G)$ and $\{l,k\} \cap \{u, v\} = \emptyset$, the only possible generator of $\ini_{<} (J_G)$ that divides the above monomial is $x_ly_k$. This conclude that $\{l,k\} \in E(G)$, i.e., $G_r$ is complete. Hence, $G$ satisfies $(2)$.

Conversely, suppose $G$ is not a cone graph and it satisfies $(1)$, then $\vv(J_G) \geq 2$ by \cite[Theorem 3.20]{ambhore2023v} and $\{ u,v\}$ gives a  minimal completion set of $G$. Thus, it follows from \cite[Theorem 3.6]{ambhore2023v} that $\vv (J_G) = \vv_{\emptyset} (J_G) = 2$. Now suppose $G$ has the structure as described in $(2)$, then we will get 
\[ (J_G : f_{uv}) = P_S,\]
where $S = N_G(u) \cap N_G(v)$ is a non-empty cut set of $G$. Since $G$ is not a cone graph, $\mathrm{v}(J_G) = 2$ again by \cite[Theorem 3.20]{ambhore2023v}.
\end{proof}

Now we describe the behavior of $\vv_\emptyset$ on clique-sum over a free vertex.
Let $G = G_1 \cup_v G_2$ be the clique-sum of $G_1$ and $G_2$, where $V(G_1)\cap V(G_2) =\{v\}$ is a free vertex of both $G_1$ and $G_2$. Then it follows from \cite[Lemma 2.3]{rauf2014construction} that $\mathcal{C}(G)= \mathcal{A}\cup \mathcal{B}$, where
\[ \mathcal{A}= \{T_1\cup T_2: T_i\in \mathcal{C}(G_i), ~\text{for}~ i= 1,2\},\]
\[ \mathcal{B} = \{T_1\cup T_2 \cup \{v\} : T_i\in \mathcal{C}(G_i) ~\text{and}~ T_i \notin N_{G_i}[v] \}.  \]

\begin{proposition} \label{prop_decom}
Let $G = G_1\cup_v G_2$ be such that $V(G_1)\cap V(G_2)= \{v\}$, where $v$ is a free vertex of $G_1$ and $G_2$. Then $\mathrm{v}_\emptyset(J_G) = \mathrm{v}_\emptyset(J_{G_1}) + \mathrm{v}_\emptyset(J_{G_2})+1$.  
\end{proposition}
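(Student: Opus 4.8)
The plan is to translate everything into a statement about transversals of cut sets and then exploit the decomposition $\mathcal{C}(G) = \mathcal{A} \cup \mathcal{B}$. I would start from the formula used in the proof of \Cref{prop_CM_v} (valid for any connected graph $H$ by \cite{ambhore2023v} and \cite{jaramillo2023connected}): $\vv_\emptyset(J_H) = \min\{\deg(m) : m \text{ is a minimal monomial generator of } \bigcap_{\emptyset \ne S \in \mathcal{C}(H)} m_S\}$. Since each $m_S$ is generated in degree one by $\{x_i,y_i : i \in S\}$, a minimal generator of the intersection may be chosen squarefree and supported on distinct vertices; hence this minimum equals $\tau(H)$, the least size of a set $T \subseteq V(H)$ meeting every nonempty cut set of $H$. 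So the proposition reduces to the combinatorial identity $\tau(G) = \tau(G_1) + \tau(G_2) + 1$.

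Next I would isolate the two roles played by $v$. On one hand, since $V(G_1)\cap V(G_2)=\{v\}$ and every edge of $G$ lies inside $G_1$ or inside $G_2$, the set $N_G[v]$ is not a clique and $G\setminus\{v\}$ separates $G_1\setminus\{v\}$ from $G_2\setminus\{v\}$; thus $v$ is a cut vertex of $G$ and $\{v\}$ is a nonempty cut set, so every transversal $T$ of the cut sets of $G$ must contain $v$. On the other hand, because $v$ is a free vertex of each $G_i$, its neighbourhood in any induced subgraph $G_i\setminus(S\setminus\{v\})$ stays a clique, so $v$ is never a cut vertex after deleting other vertices; hence $v$ lies in no cut set of $G_1$ or of $G_2$. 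This is the key point that will make the minimisation separable.

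I would then invoke $\mathcal{C}(G)=\mathcal{A}\cup\mathcal{B}$ from \cite[Lemma 2.3]{rauf2014construction}. Every member of $\mathcal{B}$ contains $v$, so once $v\in T$ all of $\mathcal{B}$ is automatically met. For $\mathcal{A}$, choosing $T_2=\emptyset$ (resp.\ $T_1=\emptyset$) shows that each nonempty cut set of $G_1$ (resp.\ $G_2$) is itself a cut set of $G$, so $T$ must meet it; conversely any nonempty $T_1\cup T_2\in\mathcal{A}$ is met as soon as $T$ meets the nonempty one among $T_1,T_2$. Therefore $T$ is a transversal of the cut sets of $G$ if and only if $v\in T$, $T$ meets every nonempty cut set of $G_1$, and $T$ meets every nonempty cut set of $G_2$. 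Since the cut sets of $G_1$ lie in $V(G_1)\setminus\{v\}$, those of $G_2$ lie in $V(G_2)\setminus\{v\}$, and $V(G)=\{v\}\sqcup(V(G_1)\setminus\{v\})\sqcup(V(G_2)\setminus\{v\})$, the three requirements are imposed on pairwise disjoint vertex sets and can be optimised independently. Writing $T=\{v\}\cup(T\cap V(G_1))\cup(T\cap V(G_2))$ then yields $\tau(G)=1+\tau(G_1)+\tau(G_2)$, which is the assertion.

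The main obstacle I anticipate is the careful verification that $v$ contributes exactly $+1$ and nothing more: one must rule out a single well-chosen vertex simultaneously helping to cover cut sets of both $G_1$ and $G_2$, and this is precisely where the hypothesis that $v$ is free in each $G_i$ (so that $v$ lies in no cut set of $G_i$, forcing the relevant vertex sets to be disjoint) is indispensable. A minor point to check is the degenerate case in which some $G_i$ is complete and hence $\tau(G_i)=0$; the argument is uniform there, since the empty collection of nonempty cut sets admits the empty transversal.
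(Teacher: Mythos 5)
Your proposal is correct and follows essentially the same route as the paper: both start from the identity $\vv_\emptyset(J_H)=\min\{\deg(m): m$ a minimal generator of $\bigcap_{S\in\mathcal{C}(H)} m_S\}$ from \cite{ambhore2023v}, invoke the decomposition $\mathcal{C}(G)=\mathcal{A}\cup\mathcal{B}$ of \cite{rauf2014construction}, and use the freeness of $v$ to conclude that $v$ lies in no cut set of $G_1$ or $G_2$, so that the minimisation splits over disjoint vertex sets. Your restatement in terms of minimum transversals of the nonempty cut sets is just a combinatorial rephrasing of the paper's observation that the three intersected monomial ideals involve disjoint sets of variables, so minimal generators multiply and degrees add.
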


\begin{proof}
We know $\mathrm{v}_\emptyset(J_G) = \text{min}\{\deg(m): m ~ \text{is a generator of}~ \bigcap_{S\in \mathcal{C}(G)} m_S\} $ \cite[Theorem 3.6]{ambhore2023v}. From the above description of cut sets of $G$, we get 
\[ \bigcap _{S\in \mathcal{C}(G)} m_S = (\bigcap_{S_1\in \mathcal{C}(G_1)}m_{S_{1}}) \cap(\bigcap _{S_2\in \mathcal{C}(G_2)}m_{S_{2}}) \cap m_{v}\]
Since $v$ is a free vertex, $v\notin S_i$ for any $S_i\in \mathcal{C}(G_i)$, where $i=1,2$. Therefore, $\cap_{S_{1}\in \mathcal{C}(G_1)}m_{S_{1}}$, $\cap _{S_{2}\in \mathcal{C}(G_2)}m_{S_{2}}$ and $m_{v}$ are monomial ideals whose minimal generating sets are made of different sets of variables. Thus, any minimal generator of $\cap _{S\in \mathcal{C}(G)} m_S$ is a product of minimal generators from each of the above three ideals. Hence, $\mathrm{v}_\emptyset(J_G) = \mathrm{v}_\emptyset(J_{G_1}) + \mathrm{v}_\emptyset(J_{G_2})+1$.
\end{proof}

\begin{remark}\label{remark_1}
Due to  \Cref{prop_decom}, one may ask whether $\mathrm{v}(J_G) = \mathrm{v}(J_{G_1}) + \mathrm{v}(J_{G_2})+1$ for a decomposable graph $G=G_{1}\cup_{v} G_{2}$. In general, $\mathrm{v}(J_G) \neq \mathrm{v}(J_{G_1}) + \mathrm{v}(J_{G_2})+1$. 
For example, we consider $P_6$, which is a decomposable graph into $P_4$ and $P_2$. It is easy to verify that $\mathrm{v}(J_{P_4}) = 2$, $\mathrm{v}(J_{P_3})= 1$ and $\mathrm{v}(J_{P_6}) = 3\neq \mathrm{v}(J_{P_4})+\mathrm{v}(J_{P_3})+1$. Again, it was proved in \cite{jayanthan2019regularity} that $\reg(R_{1}/J_{G_1})+\reg(R_{2}/J_{G_2})=\reg(R/J_G)$, where $R_i=K[\{x_i,y_i : i\in V(G_i)\}]$ for $i=1,2$. So, one may expect this for $\vv$-number also. But, note that $P_3$ is decomposable into two $P_2$, and $\vv(J_{P_3})=1\neq 2\vv(J_{P_2})$ as $\vv(J_{P_2})=0$.
\end{remark}

In general for any graph $G$, $\vv_\emptyset(J_G)$ gives a upper bound of $\vv(J_G)$, but for many graph classes, this bound is far from the $\vv$-number. In the next two theorems, we give a better upper bound of $\vv$-number of binomial edge ideals, for binary tree and cycle graphs. We begin with binary tree graphs. Let us fix some notations first. 
\noindent

For $n \geq 0$, let $B_n$ denote the binary tree on $2^{n+1}-1$ vertices and $R^{(n)} = K[x_i,y_i: i\in V(B_{n})]$ be the polynomial rings corresponding to the graphs $B_n$. Let us write $R^{(n)}= \oplus_{d=0}^{\infty} R_d^{(n)}$, where $R_d^{(n)}$ denotes the $K$-vector space of all homogeneous polynomials of degree $d$ with respect to the standard grading on $R^{(n)}$. For all $n\geq 0$, the binary tree $B_n$ has $2^n$ pendant vertices and we denote them by $n_i$, where $i\in [2^n]$. So, $V(B_n) = \bigcup_{k\in [n] \cup \{0\} } \bigcup_{i \in [2^k]} k_i$.

\begin{theorem} \label{thm_6}
Let $B_n$ be a binary tree. Then $\mathrm{v}(J_{B_0}) =0 $, $\mathrm{v}(J_{B_1}) = 1, \mathrm{v}(J_{B_2}) = 2$ and $\mathrm{v}(J_{B_n}) \leq 2^{n-1} + \mathrm{v}(J_{B_{n-3}})$ for all $n \geq 3$.
\end{theorem}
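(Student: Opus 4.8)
The plan is to establish the base cases $n=0,1,2$ by direct computation and then prove the recursive upper bound $\mathrm{v}(J_{B_n}) \leq 2^{n-1} + \mathrm{v}(J_{B_{n-3}})$ for $n \geq 3$ by exhibiting an explicit polynomial $f$ and a cut set $S$ with $(J_{B_n} : f) = P_S$ of the right degree. For the base cases, $B_0$ is a single edge so $\mathrm{v}(J_{B_0})=0$; $B_1$ is a path $P_3$, giving $\mathrm{v}(J_{B_1})=1$ by \Cref{cor_path_v}; and for $B_2$ one checks via \Cref{thm_3} that the $\vv$-number equals $2$. These are routine verifications using the tools already developed.

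For the recursive step, the key idea is to decompose $B_n$ using its structure. The binary tree $B_n$ has a root of degree $2$, whose two subtrees are copies of $B_{n-1}$. The strategy is to handle the three bottom levels of the tree separately from the remaining top part, which is a copy of $B_{n-3}$. First I would take an optimal polynomial $g \in R^{(n-3)}$ realizing $\mathrm{v}(J_{B_{n-3}})$ at some cut set $S'$, so that $(J_{B_{n-3}} : g) = P_{S'}$. Then I would construct a polynomial $f$ on all of $B_n$ by multiplying $g$ by an explicit degree-$2^{n-1}$ factor $h$ built from the bottom three levels. The natural candidate for $h$ is a product of binomials $f_{ab}$ and variables $x_v$ indexed by a carefully chosen cut set $S$ contained in the appropriate level, designed so that $S$ cuts off the bottom portion and $\deg(h) = 2^{n-1}$. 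The aim is to show $(J_{B_n} : f) = P_S$ where $S = S' \cup (\text{new cut vertices from the bottom levels})$, so that the degree of $f$ is exactly $2^{n-1} + \deg(g) = 2^{n-1} + \mathrm{v}(J_{B_{n-3}})$.

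The verification that $(J_{B_n} : f) = P_{S}$ proceeds in the familiar two directions, mirroring the proof of \Cref{lemma_2}. To show the containment $\subseteq$, I would prove $f \notin P_S$ (each binomial factor and each variable factor avoids $P_S$ by construction) and show that $P_S \cdot f \subseteq J_{B_n}$, checking this generator-by-generator: for the generators $x_s, y_s$ with $s \in S$ one uses that $x_s f_{ab}, y_s f_{ab} \in J_{B_n}$ whenever $a,b \in N_{B_n}(s)$ are non-adjacent; for the generators $f_{lk}$ coming from the complete graphs on components of $B_n \setminus S$, one invokes the admissible-path description from \Cref{prop_bino_grobner} together with the fact that $g$ already handles the top $B_{n-3}$ and the variable factors supply the needed path vertices through the three bottom levels. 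The reverse containment follows from $(J_{B_n} : f) \supseteq P_S$ being automatic once $(J_{B_n} : f)$ is shown to be a prime (it suffices to note that $(J_{B_n}:f)$ is one of the minimal primes since $J_{B_n}$ is radical, and the generators of $P_S$ already lie in it).

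The main obstacle I anticipate is the bookkeeping in designing the degree-$2^{n-1}$ factor $h$ so that it simultaneously (a) has exactly the right degree, (b) induces precisely the cut set $S$ whose prime $P_S$ appears, and (c) interacts correctly with the admissible paths connecting the bottom levels to the retained $B_{n-3}$ part. In particular, ensuring that multiplying by $h$ does not accidentally enlarge the colon ideal beyond $P_S$ — that is, that no extra generator sneaks in forcing $(J_{B_n}:f)$ to be strictly larger — requires care, since the cut vertices chosen in the bottom three levels must be mutually non-adjacent after removal and must yield complete-graph components whose binomials are absorbed by $h$ via the Gr\"obner basis description. I would expect the cleanest route is to mimic the explicit $f_S f_{D_S}$ construction of the Cohen--Macaulay case, adapting the roles of $S_1, S_2, S_0$ and $D_S$ to the self-similar structure of the binary tree.
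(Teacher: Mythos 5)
Your proposal follows essentially the same route as the paper: handle $n=0,1,2$ directly, then realize the bound for $n\geq 3$ by taking an optimal witness $g$ for $J_{B_{n-3}}$ at a cut set $S$, enlarging the cut set by vertices in the bottom levels, and multiplying $g$ by a degree-$2^{n-1}$ factor supported there, verifying $(J_{B_n}:f)=P$ via the two standard inclusions. The one piece you explicitly leave open --- the design of the factor $h$ and the choice of new cut vertices --- is resolved in the paper more simply than your ``mimic $f_Sf_{D_S}$'' plan suggests: the added cut set is $S'=\{(n-2)_i : i\in[2^{n-2}]\}$, the $2^{n-2}$ vertices at level $n-2$, and $h=\prod_{s'_j\in S'} f_{s'_{j1}s'_{j2}}$ where $s'_{j1},s'_{j2}$ are the two children of $s'_j$; no variable factors (no $D_S$ analogue) are needed, because every component of $B_n\setminus(S\cup S')$ outside $B_{n-3}\setminus S$ is a $P_3$ whose center is one of the $s'_{jl}$, so the offending binomials $f_{lk}\in J_{\tilde H_i}$ are absorbed by the single factor $f_{s'_{j1}s'_{j2}}$ (each of whose monomials is divisible by $x_c$ or $y_c$ for the common neighbour $c$). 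This gives $\deg(h)=2\cdot 2^{n-2}=2^{n-1}$ exactly. Two minor slips: $B_0$ is a single vertex, not a single edge (the conclusion $\vv(J_{B_0})=0$ is unaffected), and the ``reverse containment'' is not about $(J_{B_n}:f)$ being prime --- it follows directly from $f\notin P$ since for a radical ideal $(J:f)$ is the intersection of the minimal primes not containing $f$; the two facts you list ($f\notin P$ and $Pf\subseteq J_{B_n}$) are exactly what is needed.
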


\begin{proof}
Since $B_0$ is a vertex, $\vv(J_{B_0}) = 0$, and $B_1$ being a non-complete cone graph $\vv(J_{B_1}) = 1$ by \cite[Theorem 3.20]{ambhore2023v}. Since $B_2$ satisfies \Cref{thm_3}(2), it follows that $\vv(J_{B_2}) = 2$. Now, we assume that $n \geq 3$. Let $S \in \mathcal{C}(B_{n-3})$ be such that $\mathrm{v}(J_{B_{n-3}}) = \mathrm{v}_S(J_{B_{n-3}}) = d$ and $f\in R_d^{(n-3)}$ with $(J_{B_{n-3}}: f) = P_S(B_{n-3})$. 
Let $S' = \{(n-2)_i: i\in [2^{n-2}]\} \subseteq V(B_n)$ be the collection of the pendant vertices of $B_{n-2}$. Then, $S \cup S'$ is a cut set of $B_n$ and
    \[C(B_n\setminus (S\cup S')) = C(B_{n-3} \setminus S) \sqcup_{i\in [2^{n-1}]} H_i,\]
where $H_i \cong P_3$ for each $i$. Now, every $s_j' \in S'$ is adjacent to the degree $2$ vertex of $H_i$ for precisely two $i$'s. Let us denote those vertices by $s'_{j1}$ and $s'_{j2}$. Let 
$g = f \cdot \prod_{s_j' \in S'} f_{s'_{j1} s'_{j2}} \in R^{(n)}$.
\smallskip

\noindent\textbf{Claim:} $P_{S\cup S'}(B_n) = (J_{B_n}:g)$.
   \smallskip
 
\noindent\textit{Proof of the claim.} First note that $P_{S\cup S'}(B_n) =P_S(B_{n-3}) + m_{S'} + \sum_{i\in [2^{n-1}]} J_{\tilde{H}_i}$, where $\tilde{H}_i$ is the complete graph on $V(H_i)$. Since $(J_G: f) = P_S(B_{n-3})$, we have
\[ P_S(B_{n-3})f \subseteq J_{B_{n-3}} \subseteq J_{B_n}. \]
    
\noindent Since $s'_{j1}$ and $s'_{j2} \in N_{B_n}(s_j')$, for every $s_j' \in S'$, $x_{s_j} f_{s'_{j1} s'_{j2}} \in J_{B_n}$ and $y_{s_j} f_{s'_{j1} s'_{j2}} \in J_{B_n}$. Hence,
   \[  m_{S'}\prod_{s_j' \in S'} f_{s'_{j1} s'_{j2}} \subseteq J_{B_n}. \]

\noindent For every $f_{lk} \in J_{\tilde{H}_i}$ with $f_{lk} \notin J_{B_n}$, there exists $s_j' \in S'$ such that either $s'_{j1}$ or $s'_{j2}$ is a neighbor of both $l$ and $k$. Then, we have
\[ f_{lk} f_{s'_1 s'_2} \in J_{B_n}.\]
Hence, $P_{S\cup S'}(B_n) \subseteq (J_{B_n}:g)$. Now, $f\in R_{d}^{(n-3)}$ and $f \notin P_S(B_{n-3})$ implies that $f \notin P_{S\cup S'}(B_n)$. Again, for every $s_j' \in S'$, $s'_{j1}$ and $s'_{j2}$ are in two different connected components of $G\setminus S'$. Hence $f_{s'_{j1} s'_{j2}} \notin P_{S\cup S'}(B_n)$. This concludes that $(J_{B_n} : g) = P_{S\cup S'}(B_n)$. Therefore,
   \[ \mathrm{v}(J_{B_n}) \leq \mathrm{v}_{S\cup S'}(J_{B_n}) \leq \deg(g) = \deg(f) + 2|S'| = \mathrm{v}(J_{B_{n-3}}) + 2^{n-1}.  \]
\end{proof}
       
\begin{corollary}\label{cor_binary_v}
    As a consequence of the above theorem, we have,
\begin{equation*}
 \mathrm{v}(J_{B_n}) \leq \begin{cases}
    2^{n-1}+2^{n-4}+ \cdots + 2^2 & \text{if  $n \cong 0 (mod ~3)$}.\\
    2^{n-1}+ \cdots+ 2^3+1 & \text{ $n \cong 1 (mod ~3)$}.\\
    2^{n-1} + \cdots + 2^4 +2 & \text{ $n \cong 2 (mod ~3)$}.\\
  \end{cases}
\end{equation*}
\end{corollary}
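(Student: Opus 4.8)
The corollary follows by unrolling the recursion $\mathrm{v}(J_{B_n}) \leq 2^{n-1} + \mathrm{v}(J_{B_{n-3}})$ from \Cref{thm_6} down to its base case, so the plan is purely to keep track of which base case each residue class lands in. Concretely, I would argue that iterating the recursive bound $n \to n-3 \to n-6 \to \cdots$ yields
\[
\mathrm{v}(J_{B_n}) \leq 2^{n-1} + 2^{n-4} + \cdots + 2^{r+2} + \mathrm{v}(J_{B_r}),
\]
where $r \in \{0,1,2\}$ is the residue of $n$ modulo $3$ and $r+3$ is the smallest index appearing in the geometric sum before reaching the base term; the exponents $n-1, n-4, \ldots$ form the arithmetic progression $n-1-3j$ for $j \geq 0$, terminating at the exponent $r+2$ (since $B_{r+3}$ reduces to $B_r$, contributing $2^{(r+3)-1} = 2^{r+2}$).

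The three cases then come from substituting the known base values $\mathrm{v}(J_{B_0}) = 0$, $\mathrm{v}(J_{B_1}) = 1$, $\mathrm{v}(J_{B_2}) = 2$ established in \Cref{thm_6}. When $n \equiv 0 \pmod 3$ the recursion bottoms out at $B_0$ with value $0$, and the last genuine summand before it is $2^{(0+3)-1} = 2^2$, giving the tail $\cdots + 2^2 + 0 = \cdots + 2^2$. When $n \equiv 1 \pmod 3$ it bottoms out at $B_1$ with value $1$, and the smallest geometric summand is $2^{(1+3)-1} = 2^3$, giving $\cdots + 2^3 + 1$. When $n \equiv 2 \pmod 3$ it reaches $B_2$ with value $2$, and the smallest summand is $2^{(2+3)-1} = 2^4$, giving $\cdots + 2^4 + 2$. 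This matches the three displayed expressions exactly.

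To make this rigorous I would phrase it as an induction on the number of times the recursion is applied, or equivalently write $n = 3q + r$ and induct on $q$, with the inductive step being a single application of \Cref{thm_6}. The only points requiring genuine care are bookkeeping ones: verifying that the exponents decrease by exactly $3$ at each step so the partial sums are indeed the stated geometric series, and confirming that the final base index and its constant additive term ($0$, $1$, or $2$) line up correctly with the residue class. There is no real mathematical obstacle here since all the substance lives in \Cref{thm_6}; the main (minor) hazard is an off-by-one slip in identifying the last exponent of the geometric sum for each residue, which is why I would explicitly compute the smallest summand $2^{r+2}$ in each case as a sanity check.
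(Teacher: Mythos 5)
Your proposal is correct and is exactly the argument the paper intends: the corollary is stated without proof as an immediate consequence of Theorem \ref{thm_6}, obtained by iterating the recursive bound down to the base cases $\mathrm{v}(J_{B_0})=0$, $\mathrm{v}(J_{B_1})=1$, $\mathrm{v}(J_{B_2})=2$. Your bookkeeping of the terminal exponent $2^{r+2}$ in each residue class matches the displayed formulas.
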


Now we focus on the cycle graphs. For every cut set of the cycle graphs, we construct a polynomial whose degree is going to be the local $\vv$-number. Let $C_n$ be a cycle graph with $n$ vertices, $S$ be a cut set of $C_n$, and $$P_S = m_S + J_{\Tilde{H}_1} + J_{\Tilde{H}_2} + \cdots + J_{\Tilde{H}_{c(C_n\setminus S)}}$$ be the minimal prime of $J_{C_n}$ corresponding to $S$.
Here $H_i$'s are the connected components of $C_n\setminus S$.
Then for every $s\in S$, there are exactly two neighbour vertices of $s$ in $C_n$, say $s_i$ and $s_j$. Consider the set $D_S$ defined as follows:
\[ D_S = \bigcup_{i=1}^{c(C_n\setminus S)} \{v \in V(H_i) ~:~ \deg_{H_i}(v) = 2 \}. \]
Note that each $H_i$ is a path graph. So, $D_S$ is the collection of all vertices of $H_i$ except the leaves. Let $C_1(S) := \{i \in [c(C_n\setminus S)] ~:~ |V(H_i)| =1\}$ and $C_2(S) = [c(C_n \setminus S)] \setminus C_1(S)$. 
By the construction of the set $D_S$, \[ |D_S| = \sum_{i\in C_1(S)} ( |V(H_i)| -1) + \sum _{i\in C_2(S)} (|V(H_i)|-2).\]
The first term in the above expression is zero, but we will keep that for computational purposes. Now we construct the following polynomial $f_S$ as

\[ f_S = \prod_{s\in S} f_{s_i s_j}\prod_{i\in D_S} x_i.\]  

\begin{lemma} \label{lemma_5}
    Let $C_n$ be a cycle and $f_S$ be the above polynomial. Then, $(J_{C_n}:f_S) = P_S$.
\end{lemma}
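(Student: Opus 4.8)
The plan is to establish the two inclusions that together force $(J_{C_n}:f_S)=P_S$, mirroring the structure of the proof of \Cref{lemma_2}. Since $J_{C_n}$ is radical with set of minimal primes $\{P_T : T\in\mathcal{C}(C_n)\}$, we have $(J_{C_n}:f_S)=\bigcap_{f_S\notin P_T}P_T$. Hence it suffices to show (i) $f_S\notin P_S$, which forces $(J_{C_n}:f_S)\subseteq P_S$ because $P_S$ then appears among the primes of the intersection, and (ii) $P_S\,f_S\subseteq J_{C_n}$, which yields the reverse containment $P_S\subseteq(J_{C_n}:f_S)$.

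For (i), I first record the combinatorial fact that a nonempty cut set of a cycle is an independent set of size at least two; consequently, for each $s\in S$ its two neighbours $s_i,s_j$ lie outside $S$ and, being separated by $S$ along the cycle, sit in two distinct components of $C_n\setminus S$. Therefore $f_{s_is_j}$ lies neither in $m_S$ nor in any $J_{\tilde{H}_\ell}$, so $f_{s_is_j}\notin P_S$. Likewise, every $i\in D_S$ satisfies $i\notin S$, so the variable $x_i\notin P_S$. As $P_S$ is prime, the product $f_S=\prod_{s\in S}f_{s_is_j}\prod_{i\in D_S}x_i$ is not in $P_S$.

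For (ii), I check $g\,f_S\in J_{C_n}$ for each generator $g$ of $P_S$. If $g=x_s$ or $y_s$ with $s\in S$, I use the identity $x_s f_{s_is_j}=x_{s_i}f_{s s_j}-x_{s_j}f_{s s_i}$, which lies in $J_{C_n}$ since $\{s,s_i\}$ and $\{s,s_j\}$ are edges, together with its $y_s$-analogue; because $f_{s_is_j}\mid f_S$, this gives $x_sf_S,\,y_sf_S\in J_{C_n}$. If $g=f_{lk}$ with $l,k$ in a common component $H_\ell$ and $\{l,k\}\notin E(C_n)$, then the unique path $l=w_0,w_1,\dots,w_m=k$ inside the path graph $H_\ell$ has interior vertices $w_1,\dots,w_{m-1}$ of degree two, hence lying in $D_S$. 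The reduction $\big(\prod_{r=1}^{m-1}x_{w_r}\big)f_{lk}\in J_{C_n}$ then yields $f_{lk}f_S\in J_{C_n}$, since $\prod_{r}x_{w_r}$ divides $\prod_{i\in D_S}x_i$ and hence $f_S$. Combining the two inclusions completes the argument.

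The one technical point needing a genuine proof is the path reduction $\big(\prod_{r=1}^{m-1}x_{w_r}\big)f_{lk}\in J_{C_n}$, which I would prove by induction on the path length $m$ using the telescoping identity $x_{w_{m-1}}f_{w_0w_m}=x_{w_m}f_{w_0w_{m-1}}+x_{w_0}f_{w_{m-1}w_m}$: the second summand is a generator times a variable, and the first reduces to the inductive hypothesis after multiplying by $\prod_{r=1}^{m-2}x_{w_r}$. This is the main obstacle, though a routine one; the rest is bookkeeping about the structure of cut sets and components of a cycle.
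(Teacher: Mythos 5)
Your proof is correct and follows essentially the same route as the paper's: the same two inclusions ($f_S\notin P_S$ via primeness, and $P_Sf_S\subseteq J_{C_n}$ via the identity $x_sf_{s_is_j}=x_{s_i}f_{ss_j}-x_{s_j}f_{ss_i}$ together with the path-reduction fact for non-adjacent vertices of a component). The only difference is that you spell out two points the paper takes for granted — that nonempty cut sets of a cycle are independent of size at least two, and the inductive telescoping proof of $\bigl(\prod_{r=1}^{m-1}x_{w_r}\bigr)f_{lk}\in J_{C_n}$ — both of which are correct.
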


\begin{proof}
To prove that $(J_{C_n}:f_S) =P_S$, it is sufficient to show that $f_S\notin P_S$ and $P_Sf_S\subseteq J_{C_n}$. 
Since $S$ is a cut set of $C_n$, for every $s\in S$, $s_i$ and $s_j$ are in distinct connected components of ${C_n}\setminus S$, where $N_{C_n}(s)= \{s_i,s_j\}$. Hence 
$f_{s_i s_j} \notin P_S \text{ for all } s\in S.$
The variables $x_i$ or $y_i$ belong to $P_S$ if and only if $i$ belongs to $S$. This implies that $x_j \notin P_S$ for all $j \in D_S$ as $D_S\subseteq V({C_n}\setminus S)$.
Since $P_S$ is a prime ideal, we get $f_S \notin P_S.$

It remains to show that  $P_Sf_S \subseteq J_{C_n}$. We show that for any generator $g$ of $P_S$, $gf_S \in J_{C_n}$. If $g$ equals to $x_s$ or $y_s$, then  \[x_sf_{s_is_j} = x_{s_i}f_{ss_j} - x_{s_j}f_{ss_i} \in J_{C_n}~ \text{ and } ~y_sf_{s_is_j} = y_{s_i}f_{ss_j} - y_{s_j}f_{ss_j} \in J_{C_n}.\] 
Since $f_{s_is_j}$ is a divisor of $f_S$, 
$x_sf_S\in J_{C_n} ~\text{and}~ y_sf_S \in J_{C_n} ~ \text{for all $s\in S$}.$

Now, suppose $g = f_{lk} \in J_{\Tilde{H}_i}$ for some $i\in [c(C_{n}\setminus S)]$. If $f_{lk} \in J_{C_n}$, then $f_{lk}f_{S}\in J_{C_n}$. Otherwise, there must be a path from $l$ to $k$ in $H_i$. Let us assume that the path is $l=v_0,v_1,\ldots, v_t, v_{t+1} = k$. Then we know that $(\prod_{j=1}^{t}x_{v_j})f_{lk} \in J_{C_n}.$
Since each $H_i$ itself is a path, $\deg_{H_i}(v_j) = 2$ for every $j\in[t]$. So, by construction of $D_S$, $v_j \in D_S$ for all $j\in [t]$. This implies $\prod_{j=1}^{t}x_{v_j}$ is a divisor of $f_S$. Hence, $f_{lk}f_S\in J_{C_n}$. 
This completes the proof.
\end{proof}

Using this lemma, we obtain a combinatorial upper bound of local $\mathrm{v}$-number for cycle graphs. For $C_n$, it is easy to observe that $c(C_n\setminus S)=\vert S\vert$ for all $S\in \mathcal{C}(C_{n})$.

\begin{theorem} \label{thm_7}
Let $C_n$ be a cycle graph on $[n]$ vertices and $S\in \mathcal{C}(C_n)$. Then $\mathrm{v}_S(J_{C_n}) \leq n - |C_2(S)|.$ In particular, $\mathrm{v}(J_{C_n}) \leq n - |C_2(S)|$ for all $S\in\mathcal{C}(C_n)$.

\end{theorem}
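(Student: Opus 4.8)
For a cycle $C_n$ and $S \in \mathcal{C}(C_n)$, we have $\mathrm{v}_S(J_{C_n}) \leq n - |C_2(S)|$, and hence $\mathrm{v}(J_{C_n}) \leq n - |C_2(S)|$ for all $S \in \mathcal{C}(C_n)$.

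The plan is to read off the bound directly from the polynomial $f_S$ built in Lemma \ref{lemma_5}, since that lemma already establishes $(J_{C_n}:f_S) = P_S$. Let me think about what I actually need to prove.

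The upper bound on the local v-number is immediate once I compute $\deg(f_S)$: since $(J_{C_n}:f_S) = P_S$ with $P_S \in \mathrm{Min}(J_{C_n}) \subseteq \mathrm{Ass}(J_{C_n})$, the definition of the local v-number gives $\mathrm{v}_S(J_{C_n}) \leq \deg(f_S)$. So the entire theorem reduces to the arithmetic computation that $\deg(f_S) = n - |C_2(S)|$.

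The plan is to read the bound directly off the polynomial $f_S$ constructed in Lemma \ref{lemma_5}, since that lemma already does the substantive work of establishing $(J_{C_n}:f_S) = P_S$. Because $J_{C_n}$ is radical, every associated prime is minimal, so $P_S \in \mathrm{Ass}(J_{C_n})$ whenever $S \in \mathcal{C}(C_n)$. Hence, directly from the definition of the local v-number, $\mathrm{v}_S(J_{C_n}) \leq \deg(f_S)$. The entire theorem therefore collapses to a single degree computation: I claim $\deg(f_S) = n - |C_2(S)|$.

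First I would compute the degree of $f_S = \prod_{s \in S} f_{s_is_j} \prod_{i \in D_S} x_i$ directly. Each binomial $f_{s_is_j}$ has degree $2$ and there are $|S|$ of them, while each factor $x_i$ contributes degree $1$, giving $\deg(f_S) = 2|S| + |D_S|$. So everything reduces to showing $2|S| + |D_S| = n - |C_2(S)|$. To do this I would assemble three elementary facts. From the formula for $|D_S|$ preceding the lemma, the $C_1(S)$ terms vanish (each $|V(H_i)| - 1 = 0$ there), so $|D_S| = \sum_{i \in C_2(S)} (|V(H_i)| - 2)$. Next, counting vertices gives $n = |S| + \sum_i |V(H_i)| = |S| + |C_1(S)| + \sum_{i \in C_2(S)} |V(H_i)|$, using that each component in $C_1(S)$ is a single vertex. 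Finally, the observation recorded just before the theorem, $c(C_n \setminus S) = |S|$, combined with the partition $c(C_n \setminus S) = |C_1(S)| + |C_2(S)|$, yields $|C_1(S)| = |S| - |C_2(S)|$.

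Substituting these in is a short calculation: from the vertex count, $\sum_{i \in C_2(S)} |V(H_i)| = n - |S| - |C_1(S)|$, so $|D_S| = n - |S| - |C_1(S)| - 2|C_2(S)|$, and replacing $|C_1(S)| = |S| - |C_2(S)|$ gives $|D_S| = n - 2|S| - |C_2(S)|$. Therefore $\deg(f_S) = 2|S| + |D_S| = n - |C_2(S)|$, which combined with $\mathrm{v}_S(J_{C_n}) \leq \deg(f_S)$ proves the first assertion. The ``in particular'' clause is then immediate: since $\mathrm{v}(J_{C_n}) = \min_{S' \in \mathcal{C}(C_n)} \mathrm{v}_{S'}(J_{C_n}) \leq \mathrm{v}_S(J_{C_n})$, we obtain $\mathrm{v}(J_{C_n}) \leq n - |C_2(S)|$ for every cut set $S$.

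There is no genuine obstacle here; the only delicate point is the accounting step $|C_1(S)| = |S| - |C_2(S)|$, which hinges on the cycle-specific identity $c(C_n \setminus S) = |S|$ (false for general graphs). All the real content — producing a polynomial $f$ with $(J_{C_n}:f) = P_S$ and verifying the colon equality — was already carried out in Lemma \ref{lemma_5}, so this proof is essentially a bookkeeping consequence of that construction.
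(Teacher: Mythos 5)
Your proposal is correct and follows essentially the same route as the paper: both bound $\mathrm{v}_S(J_{C_n})$ by $\deg(f_S)=2|S|+|D_S|$ via Lemma \ref{lemma_5} and then reduce $2|S|+|D_S|=n-|C_2(S)|$ using the vertex count together with the cycle-specific identity $c(C_n\setminus S)=|S|$. The only difference is cosmetic bookkeeping (you eliminate $|C_1(S)|$ first, the paper regroups the sums), so there is nothing to add.
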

\begin{proof}
It is enough to prove the first inequality. From the definition of $\mathrm{v}$-number and  \Cref{lemma_5}, we have the following:
\begin{align*}
    \mathrm{v}_S(J_{C_n}) 
    & \leq \deg(f_S)\\
    & = 2|S| + |D_S|\\
    &= 2|S| + \sum_{i\in C_1(S)} ( |V(H_i)| -1) + \sum _{i\in C_2(S)} (|V(H_i)|-2) \\
    &= \{|S| + (\sum_{i\in C_1(S)} |V(H_i)|) + (\sum _{i\in C_2(S)} |V(H_i)|~) \}\\
    & \hspace{0.4cm} +|S|- |C_1(S)| - 2|C_2(S)|\\
    &= n + |S| - ( |C_1(S)| + |C_2(S)|) - |C_2(S)|\\
    &= n+ |S| - | c_{C_n}(S)| - |C_2(S)| \\
    &= n + |S| -|S|- |C_2(S)| \hspace{0.4cm} \text{ (since $c({C_n}\setminus S) = |S| $})\\
    & = n - |C_2(S)|.
\end{align*}
    This completes the proof.  
\end{proof}


\begin{proposition} \label{lemma_6}
  $\mathrm{v}_S(J_{C_4}) = 2$, for all $S\in \mathcal{C}(C_4)$.
\end{proposition}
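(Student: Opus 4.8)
The plan is to first pin down the combinatorics of $C_4$ and then turn every colon computation into a membership question, exploiting that $J_{C_4}$ is radical. Writing $V(C_4)=\{1,2,3,4\}$ with edges $\{1,2\},\{2,3\},\{3,4\},\{1,4\}$, one checks directly that the only cut sets are $\emptyset,\{1,3\},\{2,4\}$: removing a single vertex or a pair of adjacent vertices leaves $C_4\setminus S$ connected, and any larger set fails the minimality clause in the definition of a cut set. The corresponding minimal primes are $P_\emptyset=J_{K_4}$, $P_{\{1,3\}}=m_{\{1,3\}}$ and $P_{\{2,4\}}=m_{\{2,4\}}$ (the two nonempty cut sets split $C_4$ into two isolated vertices, so the complete-graph summands vanish). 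Since $J_{C_4}$ is radical, these are exactly its associated primes, so each $\vv_S(J_{C_4})$ is well defined and I will treat all three $S$.

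For the upper bound I would use the identity $(J_{C_4}:f)=\bigcap_{P\in\mathrm{Min}(J_{C_4}),\,f\notin P}P$, valid because $J_{C_4}$ is radical. Thus to realize a prescribed $P_S$ as a colon it suffices to exhibit a degree-$2$ element $f$ lying in every minimal prime \emph{except} $P_S$. For $S=\emptyset$ take $f=x_1x_2$: it lies in $m_{\{1,3\}}$ and in $m_{\{2,4\}}$ but not in the prime $J_{K_4}$ (which contains no monomials), so $(J_{C_4}:x_1x_2)=P_\emptyset$. For $S=\{1,3\}$ take $f=f_{24}=x_2y_4-x_4y_2$: it is a generator of $J_{K_4}=P_\emptyset$ and it lies in $m_{\{2,4\}}$, but not in $m_{\{1,3\}}$, whence $(J_{C_4}:f_{24})=P_{\{1,3\}}$; symmetrically $f=f_{13}$ yields $P_{\{2,4\}}$. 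Each of these $f$ has degree $2$, so $\vv_S(J_{C_4})\le 2$ for all three cut sets.

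For the matching lower bound I would argue globally rather than prime-by-prime: since $\vv(J_{C_4})=\min_S\vv_S(J_{C_4})\le\vv_S(J_{C_4})$, it is enough to show $\vv(J_{C_4})\ge 2$. Here $\vv(J_{C_4})=0$ is impossible because $J_{C_4}$ is not prime, and $\vv(J_{C_4})=1$ is excluded by the classification of binomial edge ideals with $\vv$-number one from \cite{ambhore2023v}, since those are precisely the non-complete cone graphs and $C_4$ is not a cone. Hence $\vv(J_{C_4})\ge 2$, and therefore $\vv_S(J_{C_4})\ge 2$ for every cut set $S$, giving the desired equality.

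The only real subtlety is the upper bound: the general estimate in \Cref{thm_7} gives merely $\vv_S(J_{C_4})\le 4$ for the nontrivial cut sets $\{1,3\},\{2,4\}$, so the crux is to recognize that a far smaller degree-$2$ polynomial already works. Reformulating the colon via the radical decomposition is what makes this transparent, since it lets me bypass verifying $P_Sf\subseteq J_{C_4}$ by explicit $S$-polynomial manipulations and reduces the whole argument to the three elementary membership checks recorded above. Everything else is routine.
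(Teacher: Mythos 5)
Your proof is correct, and its skeleton is the same as the paper's: a degree-two witness realizes each $P_S$ as a colon ideal, and the matching lower bound comes from $\vv(J_{C_4})\geq 2$, which forces every local $\vv$-number to be at least $2$ since $\vv(J_{C_4})=\min_S \vv_S(J_{C_4})$. The differences lie in how the steps are certified, and your versions are more self-contained. For the upper bound the paper simply asserts $(J_{C_4}:f_{a_1a_2})=P_S$ for the two nonempty cut sets; you derive this, together with the companion identity $(J_{C_4}:x_1x_2)=P_\emptyset$, from the decomposition $(J_{C_4}:f)=\bigcap_{P\in\mathrm{Min}(J_{C_4}),\,f\notin P}P$ valid for radical ideals, which reduces everything to elementary membership checks against your (correct) list of minimal primes $J_{K_4}$, $m_{\{1,3\}}$, $m_{\{2,4\}}$ --- a cleaner mechanism than verifying $P_S f\subseteq J_{C_4}$ by hand, and one that also handles $S=\emptyset$ uniformly where the paper instead quotes \Cref{thm_3}(1). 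For the lower bound the paper likewise invokes \Cref{thm_3}(1) to get $\vv(J_{C_4})=\vv_\emptyset(J_{C_4})=2$, while you argue directly that $\vv=0$ fails because $J_{C_4}$ is not prime and $\vv=1$ fails because $C_4$ is not a cone graph, citing the characterization in \cite{ambhore2023v}; since \Cref{thm_3} itself rests on that characterization, the two routes bottom out in the same fact. No gaps.
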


\begin{proof}
It follows from \Cref{thm_3}, $(1)$, that $\vv(J_G) = \vv_{\emptyset}(J_G) = 2$. Let $S \in \mathcal{C}(C_4)$ such that $S \neq \emptyset$. Then $G\setminus S$ will be the disjoint union of two vertices of $C_4$. Let us denote them by $a_1$ and $a_2$. Then $(J_{C_4}:f_{a_1a_2}) = P_S(C_4)$. Since $\mathrm{v}(J_{C_4}) = 2$, we get $\mathrm{v}_S(J_{C_4}) = 2$.
\end{proof}

\begin{proposition}\label{lemma_7}
    $\mathrm{v}_S(J_{C_5}) = 3$, for all $S\in \mathcal{C}(C_5)$.
\end{proposition}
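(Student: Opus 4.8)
The plan is to label the vertices of $C_5$ cyclically as $1,2,3,4,5$ with edges $\{i,i+1\}$ (indices mod $5$) and first pin down the associated primes. Since $J_{C_5}$ is radical, $\mathrm{Ass}(J_{C_5}) = \mathrm{Min}(J_{C_5})$, and by \cite{herzog2010binomial} these are the $P_S$ with $S \in \mathcal{C}(C_5)$. A direct check (removing one vertex leaves a connected $P_4$, so no singleton is a cut set) shows that $\mathcal{C}(C_5)$ consists of $\emptyset$ together with the five non-adjacent pairs $\{1,3\},\{1,4\},\{2,4\},\{2,5\},\{3,5\}$, each of which disconnects $C_5$ into an isolated vertex and a copy of $P_2$. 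I would then exploit the dihedral symmetry: each automorphism of $C_5$ induces a permutation of the variables fixing $J_{C_5}$ and permuting the $P_S$, and since this action is transitive on the non-adjacent pairs while $\vv_S$ is independent of the term order, it suffices to compute $\vv_\emptyset(J_{C_5})$ and $\vv_S(J_{C_5})$ for the single representative $S=\{1,3\}$.

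For the lower bound I would prove the uniform statement $\vv(J_{C_5}) \geq 3$; because $\vv(J_{C_5}) = \min_S \vv_S(J_{C_5})$, this immediately forces $\vv_S(J_{C_5}) \geq 3$ for every $S$. To establish $\vv(J_{C_5}) \geq 3$ I rule out the values $0,1,2$: the ideal $J_{C_5}$ is not prime, so $\vv \geq 1$; $C_5$ is not a cone and fails the classification of graphs with $\vv$-number $1$ from \cite{ambhore2023v}, so $\vv \neq 1$; and $C_5$ is not a cone and satisfies neither condition $(1)$ nor $(2)$ of \Cref{thm_3}, since no edge $\{u,v\}$ has $N(u)\cup N(v)=V(C_5)$, and for non-adjacent $u,v$ the set $N(u)\cap N(v)$ is a single vertex, which is not a cut set of $C_5$. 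Hence $\vv \neq 2$.

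For the upper bound I would exhibit explicit degree-$3$ witnesses, using the radical decomposition $(J_{C_5}:f) = \bigcap_{P_T \in \mathrm{Min}(J_{C_5}),\, f \notin P_T} P_T$. For $S=\emptyset$ the monomial $f = x_1x_2x_3$ works: the index set $\{1,2,3\}$ meets every non-adjacent pair, so $f \in P_T$ for all $T \neq \emptyset$, while $f \notin P_\emptyset = J_{K_5}$ because the prime $J_{K_5}$ contains no monomial; thus $(J_{C_5}:f)=P_\emptyset$ and $\vv_\emptyset \leq 3$. For $S=\{1,3\}$ the polynomial $f = x_4 f_{25}$ of degree $3$ works: one verifies $f \in P_T$ for each of the five remaining primes (the factor $x_4$ lies in $P_{\{1,4\}}$ and $P_{\{2,4\}}$, while $f_{25}$ lies in $P_\emptyset$, $P_{\{2,5\}}$ and $P_{\{3,5\}}$), whereas $f \notin P_{\{1,3\}}$ since $f$ avoids the variables $x_1,y_1,x_3,y_3$ and $f_{45} \nmid x_4 f_{25}$. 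Hence $(J_{C_5}:f)=P_{\{1,3\}}$ and $\vv_{\{1,3\}} \leq 3$, and the symmetry reduction gives $\vv_S \leq 3$ for all non-adjacent pairs. Combining with the lower bound yields $\vv_S(J_{C_5})=3$ for every $S \in \mathcal{C}(C_5)$.

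The main obstacle is the upper bound for the non-empty cut sets. The generic polynomial $f_S$ of \Cref{lemma_5} has degree $4$ in this case (its factors $f_{25}f_{24}$ give only the bound $\vv_S \leq n-|C_2(S)| = 4$ from \Cref{thm_7}), so one must replace it by the sharper degree-$3$ polynomial $x_4 f_{25}$ and verify its colon ideal by hand. The elegant point is that optimality of degree $3$ need not be checked prime-by-prime: it follows at once from the global bound $\vv(J_{C_5}) \geq 3$ supplied by the $\vv=2$ classification in \Cref{thm_3}.
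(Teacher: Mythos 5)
Your proof is correct and follows essentially the same route as the paper: the lower bound $\vv(J_{C_5})\ge 3$ comes from \Cref{thm_3} together with the cone/complete classifications, and for each non-empty cut set you use the identical degree-$3$ witness $f_{a_1a_3}x_{a_2}$ (your $x_4f_{25}$), carrying out explicitly the colon computation that the paper labels ``straightforward to verify''. The only cosmetic difference is that for $S=\emptyset$ the paper invokes the minimal-completion-set description of $\vv_\emptyset$ whereas you exhibit the monomial $x_1x_2x_3$; both yield the value $3$.
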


\begin{proof}
    In $C_5$, the cardinality of any minimal completion set is $3$, so, $\mathrm{v}_{\emptyset}(J_{C_5}) = 3$ \cite[Theorem 3.6]{ambhore2023v}. Since $C_5$ is not a cone graph and also does not belong to the class considered in Theorem \ref{thm_3}, we have $\mathrm{v}(J_{C_5}) \geq 3$. Hence, $\mathrm{v}(J_{C_5})= \mathrm{v}_{\emptyset}(J_{C_5}) = 3$. Let $S \in \mathcal{C}(C_5)$ be such that $S \neq \emptyset$. Then $C_5\setminus S$ is a disjoint union of an edge $\{a_1,a_2\}$ and a vertex $a_3$. It is straight forward to verify that $(J_{C_5}: f_{a_1a_3}x_{a_2}) = P_S(C_5)$. Since $\mathrm{v}(J_{C_5}) = 3$, we have $\mathrm{v}_S(J_{C_5}) = 3$.   
\end{proof}

\begin{corollary} \label{cor_cycle_v}
$\mathrm{v}(J_{C_n}) \leq \lceil \frac{2n}{3}\rceil$.
\end{corollary}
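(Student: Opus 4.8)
The plan is to extract the best possible upper bound from \Cref{thm_7}, which asserts that $\vv(J_{C_n}) \leq n - |C_2(S)|$ for every cut set $S \in \mathcal{C}(C_n)$. Since this inequality holds for all $S$, it suffices to exhibit a single cut set for which $|C_2(S)|$---the number of connected components of $C_n \setminus S$ having at least two vertices---is as large as possible, namely $\lfloor n/3 \rfloor$. In other words, I would treat the corollary as an optimization of the combinatorial bound already in hand, rather than as a fresh computation.

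To build such an $S$, recall that a nonempty cut set of a cycle is precisely an independent set of vertices of size at least two, and that removing such a set of size $k$ breaks $C_n$ into exactly $k$ paths (the ``gaps'' between consecutive chosen vertices), whose sizes sum to $n - k$. For $n \geq 6$ I would set $k = \lfloor n/3 \rfloor \geq 2$ and place the $k$ vertices of $S$ around the cycle so that every gap contains at least two vertices. This is feasible because the gap sizes must total $n - k$ and $n - k \geq 2k$ (equivalently $n \geq 3\lfloor n/3 \rfloor$), so one can assign two vertices to each gap and scatter the remaining $n - 3k$ vertices arbitrarily. In particular no two chosen vertices are adjacent, so $S$ is a genuine cut set, and every component of $C_n \setminus S$ is a path on at least two vertices. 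Hence $|C_2(S)| = k = \lfloor n/3 \rfloor$.

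Feeding this into \Cref{thm_7} yields $\vv(J_{C_n}) \leq n - \lfloor n/3 \rfloor$, and a short case check on the residue of $n$ modulo $3$ gives the identity $n - \lfloor n/3 \rfloor = \lceil 2n/3 \rceil$, completing the bound for $n \geq 6$. The remaining small cases are peeled off and handled directly: $C_3 = K_3$ is complete, so $\vv(J_{C_3}) = 0 \leq 2$, while \Cref{lemma_6} and \Cref{lemma_7} give $\vv(J_{C_4}) = 2 \leq 3$ and $\vv(J_{C_5}) = 3 \leq 4$.

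The computation is essentially routine once the cut set is chosen, so the only genuine point requiring care is the construction of $S$. The mild subtlety is that the cut set optimizing \Cref{thm_7} is the \emph{sparsest} admissible one (few components, each large), which runs against the naive instinct to delete as many vertices as possible; and that for small $n$, where $\lfloor n/3 \rfloor < 2$, no cut set of the desired size exists, so those cases genuinely must be treated separately rather than absorbed into the general argument.
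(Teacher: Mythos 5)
Your proposal is correct and follows essentially the same route as the paper: both handle $n\leq 5$ separately (complete graphs for $n\leq 3$, and \Cref{lemma_6}, \Cref{lemma_7} for $n=4,5$) and for $n\geq 6$ feed a cut set of size $\lfloor n/3\rfloor$ with all components of size at least two into \Cref{thm_7}. The only cosmetic difference is that the paper exhibits the explicit set $S_0=\{3,6,\ldots,3\lfloor n/3\rfloor\}$ where you argue the existence of such a set abstractly via gap sizes; both are fine.
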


\begin{proof}
If $n= 1,2$ or $3$, then $C_n$ is a complete graph, and thus, $\mathrm{v}(J_{C_n}) = 0$. 
If $n=4,5$, then from  \Cref{lemma_6} \& \Cref{lemma_7}, it follows that 
\[\mathrm{v}(J_{C_n}) \leq \lceil \frac{2n}{3}\rceil.\]
Now suppose $ n \geq 6$. Then, take 
$S_0 = \left\{3,6,9,..., 3 \left\lfloor\frac{n}{3} \right\rfloor\right\}.$
Here, $|C_2(S)| = |C(G\setminus S)| = |S| = \lfloor\frac{n}{3}\rfloor$. By \Cref{thm_7}, $\mathrm{v}(J_{C_n}) \leq \mathrm{v}_{S_0}(J_{C_n}) \leq  n - \lfloor\frac{n}{3} \rfloor = \lceil \frac{2n}{3}\rceil.$
    \end{proof}

For $n \geq 6$, the choice of $S_0$ not only gives an upper bound of  $\mathrm{v}(J_{C_n})$, but also satisfies the property that $|C_2(S_0)| = \mathrm{max} \{ |C_2(S)| : S\in \mathcal{C}(C_n)\}$. From the above theorem, $n - |C_2(S)|$ is minimum if and only if $|C_2(S)|$ is maximum and this happens if and only if the number of connected components in $G\setminus S$ with at least two vertices is maximum. By our choice of $S_0$, only one connected component of $G\setminus S_0$ can have more than two vertices (if $3\nmid n $), but all other connected components have exactly two vertices. So, if one can show that for $n \geq 6$, $\mathrm{v}_S(J_{C_n}) = n - |C_2(S)|$, then $\mathrm{v}(J_{C_n})= \lceil \frac{2n}{3}\rceil$. On the other hand, Theorem \ref{thm_6} suggests the possible value of $\vv$-number of binomial edge ideals of a binary tree. Based on these results and Macaulay2 computations, we conjecture the following:

\begin{conjecture}\label{conjcycle}
Let $C_n$ denote the cycle graph on $n$ vertices and $B_n$ denote the binary tree of level $n$. Then
\begin{enumerate}
    \item $\mathrm{v}(J_{C_n}) =\lceil \frac{2n}{3}\rceil$ for all $n \geq 6$;

    \item $\mathrm{v}(J_{B_n}) = 2^{n-1} + \mathrm{v}(J_{B_{n-3}})$ for all $n \geq 3$.
\end{enumerate}

\end{conjecture}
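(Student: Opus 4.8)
The plan is to observe that both parts of \Cref{conjcycle} are already settled \emph{as upper bounds}: combining \Cref{thm_7} with the optimal cut set $S_0$ gives $\mathrm{v}(J_{C_n}) \le \lceil 2n/3\rceil$ in \Cref{cor_cycle_v}, and \Cref{thm_6} gives $\mathrm{v}(J_{B_n}) \le 2^{n-1} + \mathrm{v}(J_{B_{n-3}})$. Hence the entire content lies in the matching lower bounds, and I would attack each by the machinery of \Cref{thm_1}: take a homogeneous $f$ of minimal degree with $(J_G:f) = P_T$ for some cut set $T$, invoke \Cref{prop_1} to assume $\ini_<(f) \notin \ini_<(J_G)$, and then force a monomial of the required degree to divide $\ini_<(f)$.

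For part $(1)$ the goal is to upgrade \Cref{thm_7} to the equality $\mathrm{v}_S(J_{C_n}) = n - |C_2(S)| = 2|S| + |D_S|$ for every cut set $S$; minimizing $n - |C_2(S)|$ over $S$ then yields $n - \lfloor n/3\rfloor = \lceil 2n/3\rceil$ for $n\ge 6$. So I would fix $S$ and such an $f$, and produce divisibilities of two kinds. For each $s \in S$ with $N_{C_n}(s) = \{s_i, s_j\}$, the relations $x_s f, y_s f \in J_{C_n}$ should force $x_{s_i} y_{s_j}$ (with $s_i<s_j$) to divide $\ini_<(f)$, contributing $2|S|$; for each interior vertex $v \in D_S$ of a path component $H_i$, I would pick endpoints $a_1,a_2 \in V(H_i)$ flanking $v$ so that $f_{a_1a_2}\in P_S$ and the colon $(J_{C_n}:f_{a_1a_2})$ forces $t_v \mid \ini_<(f)$, contributing $|D_S|$. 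The main obstacle here is that $C_n$ is not a closed graph, so the clean formulas $\ini_<(J_{G_s}) = \ini_<(J_G) + (x_{s_i}y_{s_j})$ and $(J_G:f_{a_1a_2}) = m_{\{v\}} + J_{G\setminus\{v\}}$ from \Cref{thm_1} are not available off the shelf, and the cycle admits two arcs between any two vertices, so the colon genuinely differs from the tree case; I would therefore recover the needed divisibilities directly from the admissible-path description in \Cref{prop_bino_grobner}, using $\ini_<(f) \notin \ini_<(J_{C_n})$ to conclude the forced variable must appear. The delicate bookkeeping is the coprimality of the $2|S|+|D_S|$ forced factors: when two cut vertices share a common neighbor $c$ (a singleton component, $c\in C_1(S)$), the pieces can collide on a single variable of $c$, and one must check they land in distinct $x_c$/$y_c$ slots or otherwise account for the overlap, which is exactly the phenomenon responsible for the degradation to $n-|C_2(S)|$ rather than $n$.

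For part $(2)$ the upper bound of \Cref{thm_6} already exhibits the extremal cut set $S\cup S'$, where $S'$ is the full set of level-$(n-2)$ vertices and $S$ realizes $\mathrm{v}(J_{B_{n-3}})$; the task is to show no cut set does better, i.e. $\mathrm{v}(J_{B_n}) \ge 2^{n-1} + \mathrm{v}(J_{B_{n-3}})$. Fixing a minimal witness $(f,P_T)$ with $\ini_<(f)\notin\ini_<(J_{B_n})$, I would argue that each of the $2^{n-1}$ \emph{cherries} at the bottom, namely a level-$(n-1)$ vertex $w$ with its two leaf children $\ell_1,\ell_2$, forces at least one factor supported on the variables of $\{w,\ell_1,\ell_2\}$: if $w\notin T$ then $\ell_1,\ell_2$ lie in one component, so $f_{\ell_1\ell_2}\in P_T$ and the unique admissible path $\ell_1,w,\ell_2$ forces $t_w \mid \ini_<(f)$, while if $w\in T$ then $x_w,y_w\in P_T$ supply the contribution directly. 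Since these supports are disjoint across the $2^{n-1}$ cherries, they account for degree at least $2^{n-1}$, and the surviving factors of $\ini_<(f)$ supported on the top $B_{n-3}$ should constitute a witness forcing degree at least $\mathrm{v}(J_{B_{n-3}})$.

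I expect the principal difficulty of the whole proof to sit in this last recursion for part $(2)$: the vertex where the top $B_{n-3}$ meets each attached depth-two subtree is a cut vertex but \emph{not} a free vertex of $B_n$, so \Cref{prop_decom} and the additivity of $\mathrm{v}$ do not apply, and the reduction must be extracted directly from the structure of $\ini_<(f)$. The subtle point is to prove that, after stripping off the $2^{n-1}$ bottom factors, what remains on the upper levels really is the initial term of a witness for $P_{T\cap V(B_{n-3})}$ in $B_{n-3}$ and not something of strictly smaller degree; handling the cases where $T$ meets level $n-2$ or level $n-1$ in different patterns, and ruling out cross-cancellation between the bottom and top supports, is where the careful analysis will be concentrated.
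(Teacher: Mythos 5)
This statement is a \emph{conjecture} in the paper, not a theorem: the authors prove only the upper bounds ($\mathrm{v}(J_{C_n}) \le \lceil 2n/3\rceil$ in \Cref{cor_cycle_v} and $\mathrm{v}(J_{B_n}) \le 2^{n-1} + \mathrm{v}(J_{B_{n-3}})$ in \Cref{thm_6}) and explicitly leave the matching lower bounds open, supported only by Macaulay2 computations. Your proposal correctly identifies this state of affairs and the right point of attack, but it is a plan, not a proof: every step that would close the gap is described as a difficulty rather than resolved. Concretely, for part $(1)$ your claim that $x_sf, y_sf \in J_{C_n}$ forces $x_{s_i}y_{s_j} \mid \ini_<(f)$ rests, in the model argument of \Cref{thm_1}, on the identity $\ini_<(J_{G_s}) = \ini_<(J_G) + (x_{s_i}y_{s_j} : \ldots)$, which is proved there only because $G$ and $G_s$ are closed; for a cycle neither is closed, the reduced Gr\"obner basis of \Cref{prop_bino_grobner} contains high-degree elements coming from admissible paths around the cycle, and you give no replacement computation of $\ini_<(J_{(C_n)_s})$. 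Likewise the step ``$(J_{C_n}:f_{a_1a_2})$ forces $t_v \mid \ini_<(f)$'' used $(J_G:f_{a_1a_2}) = m_{\{v\}} + J_{G\setminus\{v\}}$ from \cite[Proposition 5.2]{bolognini2021cohen} in the Cohen--Macaulay setting, and you do not establish an analogue for $C_n$, where two arcs join $a_1$ to $a_2$. The coprimality of the $2|S|+|D_S|$ forced factors --- which you correctly identify as the source of the $n-|C_2(S)|$ count --- is flagged but not verified.

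For part $(2)$ the situation is the same: the cherry argument plausibly accounts for degree $2^{n-1}$, but the recursion --- that the residual part of $\ini_<(f)$ supported on the top $B_{n-3}$ is itself the initial term of a witness for some associated prime of $J_{B_{n-3}}$, and hence has degree at least $\mathrm{v}(J_{B_{n-3}})$ --- is precisely the missing lemma, and you state yourself that this is ``where the careful analysis will be concentrated'' without carrying it out. Since the lower bounds are exactly what the authors could not prove, and your proposal leaves each of the corresponding steps as an acknowledged obstacle, the proposal does not constitute a proof of the statement; it is an accurate roadmap of why the statement remains conjectural.
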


\section{The pair ($\vv$-number, regularity)}\label{sec:pair}

The aim of this section is to address the question: \textit{Given positive integers $k \leq m$, does there exist a connected graph $G$ with $\vv(I) = k$ and $\reg(I) = m+1$, where $I$ is an associated edge ideal of $G$?}  First, we answer the question affirmatively for the monomial edge ideals. Then, we prove the existence of such graphs for binomial edge ideals when $m \geq 2k$.
\medskip

To study the $\vv$-number and regularity of monomial edge ideals, we need some prerequisite notions. Let us start by describing those first.
\medskip

Let $G$ be a simple graph. A subset $C\subseteq V(G)$ is called a \textit{vertex cover} of $G$ if $C\cap e\neq \emptyset$ for all $e\in E(G)$. If a vertex cover is minimal with respect to inclusion, then we call it a \textit{minimal vertex cover}. Also, a subset $A\subseteq V(G)$ is said to be \textit{independent} if no edge of $G$ is contained in $A$, and $A$ is said to be a \textit{maximal independent set} if it is maximal with respect to inclusion. 
\medskip

\begin{definition}{\rm
Let $G$ be a graph with $V(G)=\{x_{1},\ldots,x_{n}\}$. Corresponding to $A\subseteq V(G)$, we associate a square-free monomial $X_{A}:=\prod_{x_{i}\in A} x_{i}$ in the polynomial ring $\mathcal{R}=K[x_{1},\ldots,x_{n}]$. The \textit{edge ideal} of $G$, denoted by $I(G)$, is a square-free monomial ideal of $R$ defined as follows:
$$I(G):=\big<X_{e}\mid e\in E(G)\big>.$$
The ideal $I(G)$ is a radical ideal of $\mathcal{R}$ and the primary decomposition of $I(G)$ is given by
$$I(G)=\bigcap_{C\,\,\text{is a minimal vertex cover of}\,\,G}\big<C\big>.$$
Thus, the ideal generated by a minimal vertex cover of $G$ is an associated prime of $G$, and vice versa.
}
\end{definition}

For an independent set $A$ of a graph $G$, consider the following set 
$$N_{G}(A):=\{x_{i}\in V(G) : \{x_{i}\}\cup A\,\, \text{contains\,\,an\,\,edge\,\,of}\,\, G\}.$$

 
\begin{definition}{\rm
Let $G$ be a simple graph. A set of edges $\{e_1,\ldots,e_k\}$ of $G$ is said to form an \textit{induced matching} in $G$ if $e_i\cap e_{j}=\emptyset$ for all $i\neq j$ and there is no edge contained in $e_1\cup\cdots\cup e_k$ other than $e_1,\ldots,e_k$. The \textit{induced matching number} of $G$, denoted by $\mathrm{im}(G)$, is the maximum cardinality of an induced matching in $G$.
}
\end{definition}

Let us consider a pair $(k,m)\in \mathbb{Z}_{>0}\times \mathbb{Z}_{>0}$ with $k\leq m$. We now construct a graph $H(k,m)$ as follows:
\begin{enumerate}
    \item[$\bullet$] $V(H(k,m))=\{x, x_i, y_i, z_i, x_1, y_{1s}, z_{1s} : 2\leq i\leq k, 1\leq s\leq m-k+1\}.$ 
    \item[$\bullet$] $E(H(k,m))=\{\{x,x_{i}\},\{x_{i},y_{i}\},\{y_i,z_i\}, \{x_1,y_{1s}\}, \{y_{1s},z_{1s}\} :  2\leq i\leq k, 1\leq s\leq m-k+1\}$
\end{enumerate}

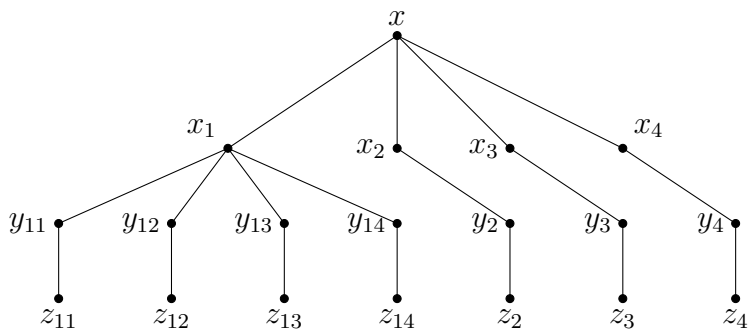
\begin{figure}[H]
    \centering
\begin{tikzpicture}[line cap=round,line join=round,>=triangle 45,x=1.5cm,y=1cm]

\draw (0,0)-- (0,1);
\draw (1,0)-- (1,1);
\draw (2,0)-- (2,1);
\draw (3,0)-- (3,1);
\draw (1.5,2)-- (0,1);
\draw (1.5,2)-- (1,1);
\draw (1.5,2)-- (2,1);
\draw (1.5,2)-- (3,1);
\draw (4,0)-- (4,1);
\draw (5,0)-- (5,1);
\draw (6,0)-- (6,1);
\draw (3,2)-- (4,1);
\draw (4,2)-- (5,1);
\draw (5,2)-- (6,1);
\draw (3,2)-- (3,3.5);
\draw (4,2)-- (3,3.5);
\draw (5,2)-- (3,3.5);
\draw (1.5,2)-- (3,3.5);

\draw (0,0) node[anchor=north] {$z_{11}$};
\draw (1,0) node[anchor=north] {$z_{12}$};
\draw (2,0) node[anchor=north] {$z_{13}$};
\draw (3,0) node[anchor=north] {$z_{14}$};
\draw (0,1) node[anchor=east] {$y_{11}$};
\draw (1,1) node[anchor=east] {$y_{12}$};
\draw (2,1) node[anchor=east] {$y_{13}$};
\draw (3,1) node[anchor=east] {$y_{14}$};
\draw (1.5,2) node[anchor=south east] {$x_1$};
\draw (4,1) node[anchor=east] {$y_2$};
\draw (5,1) node[anchor=east] {$y_3$};
\draw (6,1) node[anchor=east] {$y_4$};
\draw (4,0) node[anchor=north] {$z_{2}$};
\draw (5,0) node[anchor=north] {$z_3$};
\draw (6,0) node[anchor=north] {$z_4$};
\draw (3,2) node[anchor=east] {$x_2$};
\draw (4,2) node[anchor=east] {$x_3$};
\draw (5,2) node[anchor=south west] {$x_4$};
\draw (3,3.5) node[anchor=south] {$x$};

\begin{scriptsize}
\draw [fill=black] (0,0) circle (1.5pt);
\draw [fill=black] (1,0) circle (1.5pt);
\draw [fill=black] (2,0) circle (1.5pt);
\draw [fill=black] (3,0) circle (1.5pt);
\draw [fill=black] (0,1) circle (1.5pt);
\draw [fill=black] (1,1) circle (1.5pt);
\draw [fill=black] (2,1) circle (1.5pt);
\draw [fill=black] (3,1) circle (1.5pt);
\draw [fill=black] (1.5,2) circle (1.5pt);
\draw [fill=black] (4,1) circle (1.5pt);
\draw [fill=black] (5,1) circle (1.5pt);
\draw [fill=black] (6,1) circle (1.5pt);
\draw [fill=black] (4,0) circle (1.5pt);
\draw [fill=black] (5,0) circle (1.5pt);
\draw [fill=black] (6,0) circle (1.5pt);
\draw [fill=black] (3,2) circle (1.5pt);
\draw [fill=black] (4,2) circle (1.5pt);
\draw [fill=black] (5,2) circle (1.5pt);
\draw [fill=black] (3,3.5) circle (1.5pt);
\end{scriptsize}
\end{tikzpicture}
\caption{The graph $H(4,7)$}
    \label{figedge}
\end{figure}

For a simple graph $G$ on $\{x_1,\ldots,x_n\}$, let $I(G):= \langle\{x_ix_j ~:~ \{x_i,x_j\} \in E(G)\}\rangle$ be the edge ideal of $G$ in $\mathcal{R} = K[x_1,\ldots,x_n]$.
\begin{theorem}\label{thm_edge_pair}
    For any pair of positive integers $(k,m)$ with $k\leq m$, there exists a connected graph $G$ such that $$\left(\mathrm{v}(I(G)),\reg\left(\frac{\mathcal{R}}{I(G)}\right)\right)=\left(k,m\right).$$
\end{theorem}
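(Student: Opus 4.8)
The plan is to show that the graph $G = H(k,m)$ constructed above realizes the pair $(k,m)$, i.e. $\mathrm{v}(I(G)) = k$ and $\reg(\mathcal{R}/I(G)) = m$. First observe that $G$ is a tree: a central vertex $x$ is adjacent to $x_1, x_2, \ldots, x_k$ (as in \Cref{figedge}), each $x_i$ with $i \geq 2$ carries a pendant path $x_i - y_i - z_i$, while $x_1$ carries $m-k+1$ pendant paths $x_1 - y_{1s} - z_{1s}$. I would establish the two invariants separately.

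For the regularity, since $G$ is a forest, hence chordal, I will use the equality $\reg(\mathcal{R}/I(G)) = \mathrm{im}(G)$, so it suffices to show $\mathrm{im}(G) = m$. The family of leaf edges $\{\{y_i, z_i\} : 2 \leq i \leq k\} \cup \{\{y_{1s}, z_{1s}\} : 1 \leq s \leq m-k+1\}$ is an induced matching of size $(k-1)+(m-k+1) = m$, giving $\mathrm{im}(G) \geq m$. For the reverse inequality I would take a maximum induced matching $M$ and split into cases according to whether $M$ contains an edge incident to the hub $x$. If it does not, then $M$ lies in the pairwise disconnected branches, contributing at most one edge from each path-branch $x_i - y_i - z_i$ and at most $m-k+1$ edges from the spider centered at $x_1$, for a total of at most $m$. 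If $M$ contains some $\{x, x_j\}$, that edge forbids every middle edge and certain leaf edges, and a direct count again caps $|M|$ by $m$. Hence $\mathrm{im}(G) = m$.

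For the $\mathrm{v}$-number, I will invoke the combinatorial description of $\mathrm{v}(I(G))$ from \cite{v-edge}: it equals the minimum of $|A|$ over independent sets $A$ for which $N_G(A)$ is a minimal vertex cover of $G$. For the upper bound, take $A = \{x_1\} \cup \{z_i : 2 \leq i \leq k\}$, an independent set of size $k$; one checks that $N_G(A) = \{x\} \cup \{y_{1s} : s\} \cup \{y_i : i \geq 2\}$ is a vertex cover, and it is minimal because $x$, each $y_{1s}$, and each $y_i$ owns a private edge ($\{x,x_1\}$, $\{y_{1s},z_{1s}\}$, and $\{y_i,z_i\}$ respectively). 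This yields $\mathrm{v}(I(G)) \leq k$. For the lower bound, I observe that if $N_G(A)$ is any vertex cover, then covering each leaf edge $\{y_i, z_i\}$ with $i \geq 2$ forces $A \cap \{x_i, y_i, z_i\} \neq \emptyset$, while covering the leaf edges $\{y_{1s}, z_{1s}\}$ of the $x_1$-spider forces $A$ to meet $\{x_1\} \cup \{y_{1s}, z_{1s} : s\}$; since these $k$ vertex sets are pairwise disjoint, $|A| \geq k$. Therefore $\mathrm{v}(I(G)) = k$.

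The main obstacle I anticipate is the upper bound $\mathrm{im}(G) \leq m$, where the case analysis around edges incident to the hub $x$ must be carried out carefully so that every leaf and middle edge is correctly accounted for in each case. The $\mathrm{v}$-number computation, by contrast, reduces cleanly once the characterization of \cite{v-edge} is in hand, the lower bound being the transparent observation that each of the $k$ branches must contribute at least one vertex to $A$.
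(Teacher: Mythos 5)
Your proposal is correct and follows essentially the same route as the paper: the same graph $H(k,m)$, the characterization of $\vv(I(G))$ via independent sets $A$ with $N_G(A)$ a minimal vertex cover from \cite{v-edge}, and the identity $\reg(\mathcal{R}/I(G))=\mathrm{im}(G)$ for chordal graphs. The only differences are cosmetic: you use the witness $A=\{x_1\}\cup\{z_i : 2\le i\le k\}$ where the paper uses $\{x_1,\ldots,x_k\}$ (both have the same neighbourhood), your disjoint-branches argument for the lower bound $|A|\ge k$ is a slightly cleaner packaging of the paper's case analysis, and you supply the induced-matching verification that the paper leaves as "easy to observe."
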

\begin{proof}
For a given pair of positive integers $(k,m)$ with $k\leq m$, let $G=H(k,m)$. From the construction, it is clear that $G$ is connected. Due to \cite[Lemma 3.4]{v-edge}, there exists an independent set $A$ of $G$ such that $(I(G):X_{A})=(N_{G}(A))$, where $N_{G}(A)$ is a minimal vertex cover of $G$ and $\vert A\vert =\vv(I(G))$.  Since $N_{G}(A)$ is a minimal vertex cover of $G$, either $y_{1s}\in N_{G}(A)$ or $z_{1s}\in N_{G}(A)$ for each $s\in [m-k+1]$. If $y_{1s}\in N_{G}(A)$ for some $s\in [m-k+1]$, then either $x_1 \in A$ or $z_{1s}\in A$. If $y_{1s}\notin N_{G}(A)$ for all $s\in [m-k+1]$, then $z_{1s}\in N_{G}(A)$ for all $s\in [m-k+1]$, which imply $y_{1s}\in A$ for all $s\in [m-k+1]$. Now, for $2\leq i\leq k$, $\{y_i,z_i\}\in E(G)$ implies either $y_i\in N_{G}(A)$ or $z_{i}\in \mathcal{N}_{G}(A)$. If $y_i\in N_{G}(A)$, then either $x_{i}\in A$ or $z_{i}\in A$ and if $z_{i}\in \mathcal{N}_{G}(A)$, then $y_{i}\in A$. Considering all the possible cases, we get one of $x_i,y_i,z_i$ should belong to $A$ for each $2\leq i\leq k$ and one of $x_1,y_{1s},z_{1s}$ should belong to $A$ for each $s\in[m-k+1]$. Therefore, it is clear that $\vv(I(G))=\vert A\vert \geq k$. Now, if we take $A=\{x_1,\ldots,x_k\}$, then it is clear that $A$ is independent and $\mathcal{N}_{G}(A)$ is a minimal vertex cover of $G$. Hence, $\vv(I(G))=k$. From the structure of $G$, it is easy to observe that $\mathrm{im}(G)=m$. Since $G$ is a chordal graph, by \cite[Theorem 2.18]{zheng2004resolutions}, it follows that $\reg(\frac{\mathcal{R}}{I(G)})=m$. Thus, $(\mathrm{v}(I(G)),\reg(\frac{\mathcal{R}}{I(G)}))=(k,m)$.
\end{proof}

\begin{minipage}{\linewidth}
\begin{minipage}{0.5\linewidth}  
Write
\[G(k,r) = G_1 \cup_{v} \cdots \cup_{v} G_k \cup_v  H_1 \cup_v \cdots \cup_v H_r,\] where $G_i \cong K_{1,3}$ on the vertices $\{v,v_{i_1},v_{i_2}, v_{i_3}\}$ with $\deg_{G(k,r)}(v_{i_2}) = 3$, for all $i \in [k]$  and $H_j$ is the path on the vertices $\{v,u_{j_1},u_{j_2}\}$ with $\deg(u_{j_1}) = 2$.
\end{minipage}
\begin{minipage}{0.45\linewidth}
\usetikzlibrary{arrows}
\begin{figure}[H]
\begin{tikzpicture}[scale=1]
\draw (4.,5.)-- (2.,4.);
\draw (2.,4.)-- (1.5,3.);
\draw (2.,4.)-- (2.5,3.);
\draw (4.,5.)-- (3.5,4);
\draw (3.5,4)-- (3.,3.);
\draw (3.5,4)-- (4.,3.);
\draw (4.,5.)-- (5.,4.);
\draw (5.,4.)-- (4.5,3.);
\draw (5.,4.)-- (5.5,3.);
\draw (4.,5.)-- (6.,4.);
\draw (6.,4.)-- (6.,3.);
\draw (4.,5.)-- (7.,4.);
\draw (7.,4.)-- (7.,3.);
\draw(5,2.5) node{$G(3,2)$};
\draw(4,5.2) node{$v$};
\draw(1.7,4) node{$v_{1_2}$};
\draw(3.1,4) node{$v_{2_2}$};
\draw(4.6,4) node{$v_{3_2}$};
\draw(5.6,4) node{$u_{1_1}$};
\draw(6.6,4) node{$u_{2_1}$};
\begin{scriptsize}
\draw [fill=black] (4.,5.) circle (1.5pt);
\draw [fill=black] (2.,4.) circle (1.5pt);
\draw [fill=black] (1.5,3.) circle (1.5pt);
\draw [fill=black] (2.5,3.) circle (1.5pt);
\draw [fill=black] (3.5,4) circle (1.5pt);
\draw [fill=black] (3.,3.) circle (1.5pt);
\draw [fill=black] (4.,3.) circle (1.5pt);
\draw [fill=black] (5,4) circle (1.5pt);
\draw [fill=black] (4.5,3.) circle (1.5pt);
\draw [fill=black] (6,3.) circle (1.5pt);
\draw [fill=black] (6.,4.) circle (1.5pt);
\draw [fill=black] (5.5,3.) circle (1.5pt);
\draw [fill=black] (7.,4.) circle (1.5pt);
\draw [fill=black] (7.,3.) circle (1.5pt);
\end{scriptsize}
\end{tikzpicture}
\end{figure}
\end{minipage}
\end{minipage}

\noindent Observe that every cut set of $G(k,r)$ is a subset of $\{v, v_{i_2}, u_{j_1} : i\in [k], j\in [r] \}$. First, we understand the local $\vv$-number of $J_{G(n,r)}$ with respect to a cut set.

\begin{proposition}\label{lemma_3}
For every $S\in \mathcal{C}(G(k,r))$, $\mathrm{v}_S (J_{G(k,r)}) \geq k$.
\end{proposition}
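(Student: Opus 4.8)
The plan is to bound $\deg f$ from below by locating, inside each of the $k$ claws $G_i$, a distinct variable that must divide the leading term of $f$. Fix a cut set $S$ and a homogeneous polynomial $f$ with $(J_{G(k,r)}:f)=P_S$ realizing $\vv_S(J_{G(k,r)})=\deg f$. By \Cref{prop_1} I may assume $\ini_{<}(f)\notin \ini_{<}(J_{G(k,r)})$. Write $G_i\cong K_{1,3}$ with center $v_{i_2}$ and the two pendant leaves $v_{i_1},v_{i_3}$, the third leaf being the shared vertex $v$. The target is: for every $i\in[k]$ some variable indexed by a vertex of $\{v_{i_1},v_{i_2},v_{i_3}\}$ divides $\ini_{<}(f)$. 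Since these vertex sets are pairwise disjoint (distinct claws meet only in $v$, which I will scrupulously avoid), the resulting $k$ variables are pairwise coprime, so their product divides $\ini_{<}(f)$ and forces $\deg f\ge k$, whence $\vv_S(J_{G(k,r)})\geq k$.

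For a fixed $i$ I would split on whether the center $v_{i_2}$ lies in $S$. If $v_{i_2}\notin S$, then the path $v_{i_1}-v_{i_2}-v_{i_3}$ survives in $G(k,r)\setminus S$, so $v_{i_1},v_{i_3}$ lie in one component and $f_{v_{i_1}v_{i_3}}\in P_S$; hence $f\in (J_{G(k,r)}:f_{v_{i_1}v_{i_3}})$. As $v_{i_1},v_{i_3}$ are pendant (so never in a cut set), one checks $f_{v_{i_1}v_{i_3}}\notin P_T$ precisely when $v_{i_2}\in T$, so this colon equals $\bigcap_{T\ni v_{i_2}}P_T=m_{\{v_{i_2}\}}+J_{G(k,r)\setminus v_{i_2}}$ by \cite[Proposition 5.2]{bolognini2021cohen}, exactly as in the proof of \Cref{thm_1}. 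Since $\ini_{<}(f)\notin\ini_{<}(J_{G(k,r)})\supseteq\ini_{<}(J_{G(k,r)\setminus v_{i_2}})$, and the variables $x_{v_{i_2}},y_{v_{i_2}}$ are coprime to all generators of $J_{G(k,r)\setminus v_{i_2}}$, the monomial $\ini_{<}(f)$ must lie in $m_{\{v_{i_2}\}}$; that is, $x_{v_{i_2}}$ or $y_{v_{i_2}}$ divides it.

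If instead $v_{i_2}\in S$, I would use $f\in(J_{G(k,r)}:x_{v_{i_2}})=J_{G_{v_{i_2}}}$ from \cite[Proposition 3.1]{ambhore2023v}, where $G_{v_{i_2}}$ adds the clique on $N_{G(k,r)}(v_{i_2})=\{v,v_{i_1},v_{i_3}\}$, i.e. the new edges $\{v,v_{i_1}\},\{v,v_{i_3}\},\{v_{i_1},v_{i_3}\}$. Some reduced Gr\"obner basis element $u$ of $J_{G_{v_{i_2}}}$ divides $\ini_{<}(f)$; it cannot come from an admissible path of $G(k,r)$ itself, for otherwise $\ini_{<}(f)\in\ini_{<}(J_{G(k,r)})$. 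Thus its admissible path uses one of the new edges, each of which is incident to $v_{i_1}$ or $v_{i_3}$, so $v_{i_1}$ or $v_{i_3}$ lies on that path. By the description in \Cref{prop_bino_grobner}, the support of $u$ is exactly the vertex set of its admissible path, so $u$, and hence $\ini_{<}(f)$, is divisible by a variable of $v_{i_1}$ or $v_{i_3}$. In both cases I have produced the desired star-$i$ variable, and collecting these coprime variables over $i\in[k]$ finishes the argument.

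The main obstacle is the case $v_{i_2}\in S$: the claw $K_{1,3}$ is not a closed graph, so the clean formula for $\ini_{<}(J_{G_{v_{i_2}}})$ exploited in the Cohen--Macaulay closed setting of \Cref{thm_1} is unavailable, and I must instead argue through the squarefree, one-variable-per-vertex structure of the admissible-path monomials from \Cref{prop_bino_grobner}. A secondary point demanding care is coprimeness: the shared vertex $v$ must never be the vertex whose variable I extract, which is why in each case I record a variable at $v_{i_2}$ (resp. at $v_{i_1}$ or $v_{i_3}$) rather than at $v$; this is what makes the per-claw contributions independent and their degrees additive.
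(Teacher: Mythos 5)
Your proof is correct and follows essentially the same strategy as the paper: the same case split on whether $v_{i_2}\in S$, extracting a variable at $v_{i_1}$ or $v_{i_3}$ in the first case via the new edges of $G(k,r)_{v_{i_2}}$ and a variable at $v_{i_2}$ in the second, then multiplying the $k$ pairwise coprime variables to bound $\deg(f)$. The only (harmless) deviation is in the case $v_{i_2}\notin S$, where you compute $(J_{G(k,r)}:f_{v_{i_1}v_{i_3}})=m_{\{v_{i_2}\}}+J_{G(k,r)\setminus\{v_{i_2}\}}$ via the intersection of minimal primes, whereas the paper reads off the factor $t_{v_{i_2}}$ directly from an admissible-path divisor of $\ini_{<}(f)\,x_{v_{i_1}}y_{v_{i_3}}$; both arguments are sound, and the paper itself uses your version of this step in the proof of Theorem \ref{thm_1}.
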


\begin{proof}
Let $S$ be a cut set and $f$ be a homogeneous polynomial of $R$ with $(J_{G(k,r)}:f) = P_S$. Due to Proposition \ref{prop_1}, we may assume $\ini_{<} (f) \notin \ini_{<}(J_{G(k,r)})$. We show that $\deg(f) \geq k$.

Let $i \in [k]$ be such that $v_{i_2} \in S$. Then $f\in (J_{G(k,r)}: x_{v_{i_2}}) = J_{G(k,r)_{v_{i_2}}}$. Therefore, $\ini_{<} (f) \in \ini_{<}(J_{G(k,r)_{v_{i_2}}}).$
So, there exists a monomial $m$ in $\ini_{<}(J_{G(k,r)_{v_{i_2}}})$, corresponding to an admissible path $P$ in ${G(k,r)_{v_{i_2}}}$ such that $m$ divides $\ini_{<}(f)$. Since $\ini_{<} (f) \notin \ini_{<}(J_{G(k,r)})$, $P$ is not an admissible path of $G(k,r)$, i.e., the path $P$ must contain an edge from $E(G(k,r)_{v_{i_2}}) \setminus E(G(k,r)) = \bigl \{ \{v_{i_1}, v_{i_3}\}, \{ v, v_{i_1}\}, \{v, v_{i_3}\} \bigr \}$. Then either $v_{i_1}$ or $v_{i_3}$ is a vertex of $P$. Thus, at least one of $t_{v_{i_1}}$ or $t_{v_{i_3}}$ must divide $m$, where $t_l \in \{x_l,y_l\}$. 
Hence for $j \in \{1,3\}$,
\[ \prod_{\{i\in [k]~:~ v_{i_2}\in S\}} t_{v_{i_j}} | \ini_{<} (f).\]

Now, suppose $i \in [k]$ is such that $v_{i_2} \notin S$. Then, the graph $G_i\setminus \{v\}$ is an induced subgraph of a connected component of $G\setminus S$. Then $f_{v_{i_1}v_{i_3}} \in P_S \setminus J_{G(k,r)}$. Now,
$f  f_{v_{i_1}v_{i_3}}  \in J_{G(k,r)}$  implies that 
$\ini_{<} (f)  x_{v_{i_1}}y_{v_{i_3}} \in \ini_{<}(J_{G(k,r)})$ by assuming $v_{i_1}<v_{i_3}$. Suppose $m'$ is a monomial in $\ini_{<} (J_{G(k,r)})$, corresponding to an admissible path $P'$ in $G(k,r)$, that divides $\ini_{<} (f)  x_{v_{i_1}}y_{v_{i_3}}$. Since $\ini_{<} (f) \notin \ini_{<}(J_{G(k,r)})$, $P'$ must contain either $v_{i_1}$ or $v_{i_3}$. But any path of length at least two in $G(k,r)$, containing any of these vertices, must pass through $v_{i_2}$, i.e., $v_{i_2} \in V(P')$. Therefore, either $x_l$ or $y_l$ divides $\ini_{<} (f)$, where $l = v_{i_2}$. Hence,
       \[ \prod_{\{i \in [k] ~:~ v_{i_2}\notin S\}} t_{v_{i_2}} | \ini_{<} (f). \]
       
Combining the above two division statements, we get $k$ distinct variables that divide $\ini_{<} (f)$. Since $f$ is chosen arbitrarily, we have $\mathrm{v}_S(J_{G(k,r)}) \geq k$.
\end{proof}

\begin{theorem} \label{thm_4}
Let ${G(k,r)}$ be the graph as constructed above and $k \geq 2$. Then $\mathrm{v}(J_{G(k,r)}) = k$.
\end{theorem}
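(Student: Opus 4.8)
The plan is to pair the lower bound already in hand with an explicit realization of the value $k$ at the single cut set $S=\{v\}$. Since $J_{G(k,r)}$ is radical, its associated primes are exactly the minimal primes $P_{S'}$, $S'\in\mathcal{C}(G(k,r))$, so $\mathrm{v}(J_{G(k,r)})=\min_{S'}\mathrm{v}_{S'}(J_{G(k,r)})$; by \Cref{lemma_3} every term in this minimum is at least $k$, which already gives $\mathrm{v}(J_{G(k,r)})\geq k$. It therefore remains to produce one cut set $S$ and one homogeneous $f$ of degree $k$ with $(J_{G(k,r)}:f)=P_S$.

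I would take $S=\{v\}$ and the polynomial
\[ f=\Big(\prod_{i=1}^{\lfloor k/2\rfloor} f_{v_{(2i-1)_2}\,v_{(2i)_2}}\Big)\cdot x_{v_{k_2}}^{\,k-2\lfloor k/2\rfloor}, \]
a product of binomials pairing up the claw centres $v_{1_2},\dots,v_{k_2}$, multiplied by the single variable $x_{v_{k_2}}$ when $k$ is odd. Then $\deg f=k$ (using $k\geq2$, so at least one binomial factor is present), and every claw centre occurs in exactly one factor of $f$. To compute the colon I would use that, $J_{G(k,r)}$ being radical, $(J_{G(k,r)}:f)=\bigcap_{S':\,f\notin P_{S'}}P_{S'}$; hence it suffices to prove that $f\notin P_{\{v\}}$ while $f\in P_{S'}$ for every cut set $S'\neq\{v\}$, i.e. that $\{v\}$ is the only cut set omitting $f$.

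The membership test is clean because distinct claw centres are joined in $G(k,r)$ only through $v$. Thus a binomial factor $f_{v_{a_2}v_{b_2}}$ lies outside $P_{S'}$ exactly when $v_{a_2},v_{b_2}\notin S'$ and $v\in S'$ (otherwise the two centres either meet $S'$ or lie in a common component of $G(k,r)\setminus S'$, so the factor lies in $P_{S'}$), and the variable factor lies outside $P_{S'}$ exactly when $v_{k_2}\notin S'$. Since every claw centre appears in some factor, this gives the characterization: $f\notin P_{S'}$ if and only if $v\in S'$ and $S'$ contains no claw centre. In particular $f\notin P_{\{v\}}$. The crux is then to show that the \emph{only} such cut set is $\{v\}$ itself, and for this I would establish the combinatorial fact that no cut set of $G(k,r)$ contains $v$ together with a path-midpoint $u_{j_1}$: once $v$ is removed, the component of $u_{j_1}$ is the isolated edge $\{u_{j_1},u_{j_2}\}$, so deleting $u_{j_1}$ does not raise the number of connected components, i.e. $u_{j_1}$ is inessential in every vertex set containing $v$. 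Consequently a cut set containing $v$ but no claw centre can contain no further vertex and equals $\{v\}$, giving $(J_{G(k,r)}:f)=P_{\{v\}}$ and hence $\mathrm{v}_{\{v\}}(J_{G(k,r)})\leq k$; together with the lower bound this yields $\mathrm{v}(J_{G(k,r)})=k$.

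The main obstacle is exactly this last combinatorial point, namely controlling which cut sets contain $v$. It is where the hypothesis $k\geq2$ enters (forcing $f$ to carry a genuine binomial factor, and thereby forcing $v\in S'$ in the characterization), and, more importantly, it is what makes the $r$ paths invisible to the $\mathrm{v}$-number: because a path never contributes a cut set containing $v$, the paths add nothing beyond the claws, and the answer depends only on $k$.
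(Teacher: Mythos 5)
Your proof is correct and follows the same skeleton as the paper's: the lower bound is quoted from \Cref{lemma_3} in both cases, and the upper bound is realized at the same cut set $\{v\}$ by a degree-$k$ polynomial. The differences are in the witness and in how the colon ideal is computed. The paper takes $f=\bigl(\prod_{i\in[k-2]}x_{v_{i_2}}\bigr)f_{v_{(k-1)_2}v_{k_2}}$ (one binomial and $k-2$ variables) and verifies $(J_{G(k,r)}:f)=P_{\{v\}}$ directly, by checking that $f\notin P_{\{v\}}$ and that each generator of $P_{\{v\}}$ not already in $J_{G(k,r)}$ (namely $x_v,y_v$ and the $f_{v_{i_1}v_{i_3}}$) is multiplied into $J_{G(k,r)}$ by a suitable factor of $f$; this only requires understanding the single prime $P_{\{v\}}$. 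You instead pair up the claw centres into $\lfloor k/2\rfloor$ binomials and compute the colon via the decomposition $(J_{G(k,r)}:f)=\bigcap_{f\notin P_{S'}}P_{S'}$, which shifts the burden onto a complete combinatorial description of $\mathcal{C}(G(k,r))$: you must show that $\{v\}$ is the \emph{only} cut set containing $v$ and no claw centre, for which your observation that $u_{j_1}$ is never essential alongside $v$ (and that leaves never lie in cut sets) is exactly what is needed and is correct. Your route buys a cleaner membership criterion for each factor of $f$ and avoids checking products of binomials against $J_{G(k,r)}$, at the price of classifying all cut sets; the paper's route is more local but requires the (slightly terse) verification that $f_{v_{(k-1)_2}v_{k_2}}f_{v_{(k-1)_1}v_{(k-1)_3}}\in J_{G(k,r)}$ and its companions. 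Both arguments are complete and yield $\mathrm{v}_{\{v\}}(J_{G(k,r)})\le k$, hence equality.
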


\begin{proof}
From \Cref{lemma_3}, we have $\mathrm{v}(J_{G(k,r)}) \geq k$. Therefore, to prove the assertion of the theorem, we need to show that $\mathrm{v}_T(J_{G(k,r)}) \leq k$ for some $T \in \mathcal{C}({G(k,r)})$. Let $T = \{v\}$ and     
    \[ f = \left(\prod_{i\in [k-2]} x_{v_{i_2}} \right)  f_{v_{(k-1)_2}v_{k_2}}\]

\noindent
\textbf{Claim:} $(J_{G(k,r)}:f) = P_T$. \vskip 2mm \noindent
\textit{Proof of the Claim:} Note that $v_{i_2} \notin T$ for all $i\in[k]$, and since $v_{(k-1)_2}$ and $v_{k_2}$ are in distinct connected components of ${G(k,r)}\setminus T,$ $f_{v_{(k-1)_2}v_{k_2}} \notin P_S$ and hence $f\notin P_T$. Therefore, $(J_{G(k,r)}:f) \subseteq P_T$. To prove the reverse inclusion, first observe that the generators of $P_T$ which are not in $J_{G(k,r)}$ are the variables $x_v,y_v$ and binomials $f_{v_{i_1}v_{i_3}}$, for $i \in [k]$. Since  $v_{(k-1)_2}$ and $v_{k_2}$ are adjacent to $v$ in $G(k,r)$, 
$t_v  f_{v_{(k-1)_2}v_{k_2}} \in J_{G(k,r)},$ where $t_v \in \{x_v,y_v\}$.
Since $v_{i_1}$ and $v_{i_3}$ are neighbors of $v_{i_2}$, 
$x_{v_{i_2}}  f_{v_{i_1}v_{i_3}} \in J_{G(k,r)}.$ This implies that 
%
$f_{v_{i_1}v_{i_3}}f \in J_{G(k,r)}$ and $t_vf\in J_{G(k,r)}$, where $t_v \in \{x_v,y_v\}$.
This proves that $P_T \subseteq (J_{G(k,r)} : f)$. Hence, $ \mathrm{v}_T(J_{G(k,r)}) \leq \deg(f) = k.$ 
\end{proof}

\begin{corollary}
For any non-negative integer $k$, there exists a connected graph $G$ with $\mathrm{v}(J_G) = k$.
\end{corollary}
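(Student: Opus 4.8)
The statement to prove is that for any non-negative integer $k$, there exists a connected graph $G$ with $\mathrm{v}(J_G) = k$. The plan is to handle the small cases directly and then invoke the family $G(k,r)$ constructed just above. First I would dispose of the degenerate values: for $k = 0$ I would take $G$ to be a single vertex (or any complete graph $K_m$), since $\mathrm{v}(J_{K_m}) = 0$ as noted in Theorem \ref{thm_2}; for $k = 1$ I would take $G = B_1$, the star $K_{1,3}$, which is a non-complete cone graph and hence has $\mathrm{v}(J_G) = 1$ by the cited result \cite[Theorem 3.20]{ambhore2023v}.

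For the generic range $k \geq 2$, the heavy lifting has already been done in \Cref{thm_4}, which establishes that $\mathrm{v}(J_{G(k,r)}) = k$ for the explicitly constructed graph $G(k,r)$. Thus I would simply fix any value of $r$ (for instance $r = 1$, or $r = 0$ if one allows the construction with no pendant paths $H_j$) and set $G = G(k,r)$. The graph $G(k,r)$ is connected by construction, since every constituent $K_{1,3}$ and path is glued along the common vertex $v$. Therefore $G = G(k,r)$ is a connected graph realizing $\mathrm{v}(J_G) = k$, which completes the argument.

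The only point requiring mild care is the boundary between the cases: one must confirm that the construction $G(k,r)$ genuinely requires $k \geq 2$ (as the statement of \Cref{thm_4} assumes), so that $k = 0$ and $k = 1$ must be treated separately rather than being absorbed into the general family. I do not anticipate a genuine obstacle here, as all the required inputs are already in hand; the corollary is essentially a bookkeeping consequence of \Cref{thm_2} (for $k=0$), the cone-graph characterization (for $k=1$), and \Cref{thm_4} (for $k \geq 2$). If one prefers a uniform statement, one could alternatively note that the path graphs $P_n$ give, via \Cref{cor_path_v}, the values $\mathrm{v}(J_{P_n}) = \lceil 2(n-2)/3 \rceil$, which realize $0$ and certain small values, but the family $G(k,r)$ is the cleaner route since it hits every $k$ exactly.
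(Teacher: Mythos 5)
Your proposal is correct and follows essentially the same route as the paper: complete graphs for $k=0$, a non-complete cone graph for $k=1$ via \cite[Theorem 3.20]{ambhore2023v}, and $G(k,r)$ via \Cref{thm_4} for $k\geq 2$. (One trivial slip: in the paper's notation $B_1$ is the path on three vertices, i.e.\ $K_{1,2}$ rather than $K_{1,3}$, but both are non-complete cone graphs, so the argument is unaffected.)
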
  

\begin{proof}
If $k=0$, take $G$ to be any complete graph, as the binomial edge ideal of a complete graph is a prime ideal. If $k = 1$, then take $G$ to be any non-complete cone graph due to \cite[Theorem 3.20]{ambhore2023v}. If $k \geq 2$, then taking $G = G(k,r)$, we get $\vv(J_{G})=k$ by Theorem \ref{thm_4}, where $r$ is any non-negative integer.
 \end{proof}

Observe that the $\mathrm{v}$-number of the graph $G(k,r)$ does not depend on $r$. Now, our goal is to show that the regularity of $\frac{R}{J_{G(k,r)}}$ is $2k+r$. For that purpose, we need to study the regularity of complete graphs with whiskers. 

\begin{minipage}{1\linewidth}
\begin{minipage}{0.6\linewidth} 
For example, in \Cref{fig:com_whis}, take $G$ to be $K_4$ with $4$ whiskers attached to $3$ vertices of $K_4$. Then the number of cut vertices in $G$ is $3$, and the regularity of $\frac{R}{J_G} = 4$. We generalize this result for an arbitrary complete graph $K_m$ with whiskers, where $m>1$.
\end{minipage}
\begin{minipage}{0.4\linewidth} 

\begin{figure}[H]
\begin{tikzpicture}[line cap=round,line join=round,>=triangle 45,x=0.6cm,y=0.6cm]
\draw (2,2)-- (2,4);
\draw (2,2)-- (5,2);
\draw (5,2)-- (5,4);
\draw (2,4)-- (5,4);
\draw (2,2)-- (5,4);
\draw (2,4)-- (5,2);
\draw (2,2)-- (0,1);
\draw (2,4)-- (0,5);
\draw (5,4)-- (7,5);
\draw (5,4)-- (7,3);
\begin{scriptsize}
\draw [fill=black] (2,2) circle (1.5pt);
\draw [fill=black] (2,4) circle (1.5pt);
\draw [fill=black] (5,2) circle (1.5pt);
\draw [fill=black] (5,4) circle (1.5pt);
\draw [fill=black] (0,1) circle (1.5pt);
\draw [fill=black] (0,5) circle (1.5pt);
\draw [fill=black] (7,5) circle (1.5pt);
\draw [fill=black] (7,3) circle (1.5pt);
\end{scriptsize}
\end{tikzpicture}
\caption{ $K_{4}$ with whiskers}
    \label{fig:com_whis}
\end{figure}
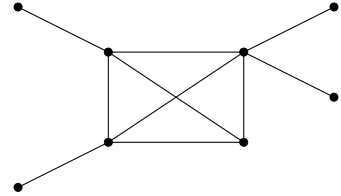
\end{minipage}
\end{minipage}

\begin{proposition}\label{lemma_4}
If $G$ is a complete graph with whiskers attached to $l$ distinct vertices, then $\reg (\frac{R}{J_G}) = l+1$.
\end{proposition}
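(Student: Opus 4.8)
The plan is to prove this by induction on the number $l$ of vertices of $K_m$ carrying a whisker, peeling off one whisker at a time and invoking the additivity of regularity under decomposition along a free vertex. Set up notation as follows: let $K_m$ have vertex set $\{1,\ldots,m\}$ and suppose whiskers $w_1,\ldots,w_l$ are attached at the distinct vertices $1,\ldots,l$, so that each $w_i$ is a pendant vertex adjacent only to $i$. Write $G_l$ for this graph. The base case is $l=0$, where $G_0=K_m$; since $J_{K_m}$ is the ideal of $2$-minors of a generic $2\times m$ matrix, it has a linear resolution and hence $\reg(R/J_{K_m})=1=0+1$.

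For the inductive step with $l\geq 1$, I would decompose $G_l = G_{l-1}\cup_{l} e$, where $e$ denotes the edge $\{l,w_l\}$ (a copy of $K_2$) and $G_{l-1}$ is $K_m$ with whiskers attached at $1,\ldots,l-1$. Then $V(G_{l-1})\cap V(e)=\{l\}$ and $V(G_{l-1})\cup V(e)=V(G_l)$. The crucial verification is that the gluing vertex $l$ is a \emph{free} vertex of both pieces: in $e=K_2$ this is immediate, while in $G_{l-1}$ the vertex $l$ carries no whisker, so $N_{G_{l-1}}[l]=V(K_m)$, which induces the clique $K_m$, and thus $l$ is free in $G_{l-1}$. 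With the free-vertex condition in hand, the additivity of regularity for decomposable graphs (\cite{jayanthan2019regularity}, recalled in \Cref{remark_1}) gives
$$\reg\!\left(\frac{R}{J_{G_l}}\right) = \reg\!\left(\frac{R'}{J_{G_{l-1}}}\right) + \reg\!\left(\frac{R''}{J_{K_2}}\right),$$
where $R'$ and $R''$ are the polynomial rings on the variables associated to $G_{l-1}$ and $e$, respectively.

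To finish, note that $J_{K_2}$ is principal, generated by a single binomial of degree $2$, so $\reg(R''/J_{K_2})=1$; hence each whisker contributes exactly $1$ to the regularity. Combining this with the inductive hypothesis $\reg(R'/J_{G_{l-1}})=(l-1)+1=l$ yields $\reg(R/J_{G_l})=l+1$, as desired.

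The only genuine obstacle is confirming, at each stage, the free-vertex hypothesis needed to apply the decomposition formula. The subtle point is that the attachment vertex $l$ is \emph{not} free in $G_l$ itself, since $w_l$ is not adjacent to the remaining vertices of $K_m$; it becomes free only after its own whisker is removed, at which stage its closed neighbourhood is exactly the clique $V(K_m)$. Once this observation is correctly placed at the head of each inductive step, the remaining computation is entirely routine.
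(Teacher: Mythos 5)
Your induction is correct as far as it goes, and for the sub-case it covers it coincides with the first half of the paper's own argument (the paper applies the same free-vertex additivity from \cite{jayanthan2019regularity} to a one-whisker-per-vertex subgraph $H$). The gap is that you have silently assumed each of the $l$ distinguished vertices carries exactly one whisker. In the proposition as the paper states and uses it, several whiskers may hang from the same vertex: the motivating example is $K_4$ with four whiskers on three vertices (regularity $4$, not $5$), and in the proof of Theorem \ref{thm_5} the graphs $G_v$ and $G_v\setminus\{v\}$ have two whiskers attached to each $v_{i_2}$, so the multi-whisker case is exactly the one needed downstream. The real content of the proposition is that additional whiskers at an already-whiskered vertex do not raise the regularity, and your argument never addresses this.

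Your peeling strategy breaks down precisely there. If a vertex $i$ carries whiskers $w_{i,1},\ldots,w_{i,s}$ with $s\geq 2$, then after removing the edge $\{i,w_{i,1}\}$ the vertex $i$ is still adjacent to $w_{i,2}$, which is not adjacent to the remaining vertices of $K_m$, so $i$ is not a free vertex of the remaining graph and the decomposition formula does not apply; nor can you split off the whole star $K_{1,s}$ at $i$, since the centre of $K_{1,s}$ is not a free vertex of that star when $s\geq 2$. The paper circumvents this by proving two inequalities separately: the lower bound $\reg(R/J_G)\geq l+1$ follows from essentially your computation applied to the induced subgraph with one whisker per vertex together with \cite[Corollary 2.2]{matsuda2013regularity}, while the upper bound $\reg(R/J_G)\leq l+1$ is proved by induction on $l$ using the Ohtani short exact sequence (\ref{ohtani_ses}). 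That second half is what your proof is missing.
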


\begin{proof}
Let $K_m (m>1)$ be the complete graph on $\{u_1, \ldots, u_m\}$ and $G$ be obtained by attaching whiskers on $\{u_1, \ldots, u_l\}$. Suppose $H$ is a subgraph of $G$, constructed by attaching a whisker each on $\{u_1, \ldots, u_l\}$. Then it follows from \cite[Theorem 3.1]{jayanthan2019regularity}) that
\[ \reg \left(\frac{R}{J_H}\right) = \reg \left(\frac{R}{J_{K_m}}\right) + \sum_{i\in[l]} \reg \left(\frac{R}{J_{K_2}}\right) = 1 + \sum_{i\in[l]}1 = l+1.\] 
Since $H$ is an induced subgraph of $G$, $\reg (\frac{R}{J_G}) \geq \reg (\frac{R}{J_H}) = l+1 $ \cite[Corollary 2.2]{matsuda2013regularity}. We now show that $\reg(\frac{R}{J_G}) \leq l+1$ by induction on $l$.
If $l=0$, then $G$ is a complete graph, and thus, $\reg(S/J_G) = 1$ by \cite[Theorem 2.1]{kiani2012binomial}. If $m =2$, then for any $l > 0$,  $G$ is a caterpillar tree and hence $\reg(R/J_G) = l+1$, by Theorem 4.1, \cite{chaudhry2016binomial}. Now assume that $m > 2$ and $l >0$. Set $u = u_1$. Then we have the following short exact sequence \cite{ene2011cohen}.

\begin{eqnarray}\label{ohtani_ses}
0 & \rightarrow & \frac{R}{J_G} \rightarrow \frac{R}{J_{G_u}} \oplus \frac{R}{J_{G \setminus \{u\}}+(x_u,y_u)} \rightarrow \frac{R}{J_{G_u \setminus \{u\}}+(x_u,y_u)} \rightarrow 0.
\end{eqnarray}
 
\noindent Now $G_u$ and $G_u \setminus \{u\}$ both are complete graphs with whiskers on $l-1$  vertices, i.e., having $l-1$  cut vertices. The graph $G\setminus \{u\}$ also has $l-1$ cut vertices, and it is the union of a complete graph with whiskers and some isolated vertices. Since the isolated vertices do not contribute to $J_{G\setminus \{u\}}$, we can think of $G\setminus \{u\}$ as a connected graph. Thus, by induction hypothesis,
 \[ \reg\left(\frac{R}{J_{G_u}}\right) = \reg \left(\frac{R}{J_{G \setminus \{u\}}+(x_u,y_u)}\right) = \reg \left(\frac{R}{J_{G_u \setminus \{u\}}+(x_u,y_u)}\right) = l.\]
 Hence, $\reg (\frac{R}{J_G}) \leq \text{max} \{ \reg(\frac{R}{J_{G_u}}), \reg(\frac{R}{J_{G \setminus \{u\}} + (x_u,y_u)}), \reg (\frac{R}{J_{G_u \setminus \{u\}} + (x_u,y_u)})+1 \} = l+1$. 
\end{proof}

We now compute the regularity of $R/J_{G(k,r)}$.
\begin{theorem} \label{thm_5}
For every $k>0$ and $r\geq 0$, $\reg \left(\frac{R}{J_{G(k,r)}}\right) = 2k +r $ 
\end{theorem}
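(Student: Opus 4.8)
My plan is to pin $\reg(R/J_{G(k,r)})$ to $2k+r$ from both sides, using the hub $v$ as the pivot: a monotonicity argument for the lower bound, and Ohtani's short exact sequence at $v$ for the upper bound, with \Cref{lemma_4} supplying the regularity of every complete-graph-with-whiskers piece that appears.

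\textbf{Lower bound.} I would delete the hub. The induced subgraph $G(k,r)\setminus\{v\}$ is the disjoint union of the $k$ paths $v_{i_1}-v_{i_2}-v_{i_3}\cong P_3$ and the $r$ edges $u_{j_1}-u_{j_2}\cong K_2$. Since the regularity of a binomial edge ideal is additive over connected components (as $R/J_{G\setminus\{v\}}$ is a tensor product of the factors), and $\reg(R/J_{P_3})=2$ while $\reg(R/J_{K_2})=1$ by \cite[Theorem 2.1]{kiani2012binomial}, we obtain $\reg(R/J_{G(k,r)\setminus\{v\}})=2k+r$. Because $G(k,r)\setminus\{v\}$ is an induced subgraph of $G(k,r)$, \cite[Corollary 2.2]{matsuda2013regularity} yields $\reg(R/J_{G(k,r)})\ge 2k+r$.

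\textbf{Upper bound.} I would take $u=v$ in the short exact sequence \eqref{ohtani_ses}, which gives
\[\reg\!\left(\tfrac{R}{J_{G(k,r)}}\right)\le \max\Bigl\{\reg\!\left(\tfrac{R}{J_{G(k,r)_v}}\right),\ \reg\!\left(\tfrac{R}{J_{G(k,r)\setminus\{v\}}+(x_v,y_v)}\right),\ \reg\!\left(\tfrac{R}{J_{G(k,r)_v\setminus\{v\}}+(x_v,y_v)}\right)+1\Bigr\}.\]
The three graphs are read off directly. Completing $N_G(v)=\{v_{1_2},\dots,v_{k_2},u_{1_1},\dots,u_{r_1}\}$ to a clique turns $G(k,r)_v$ into a copy of $K_{k+r+1}$ (on $v$ together with these $k+r$ vertices) carrying whiskers on exactly the $k+r$ vertices $v_{i_2}$ and $u_{j_1}$, so \Cref{lemma_4} gives $\reg(R/J_{G(k,r)_v})=k+r+1$; deleting $v$ from it leaves $K_{k+r}$ with whiskers on all $k+r$ of its vertices, again of regularity $k+r+1$ by \Cref{lemma_4}. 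The middle term equals $\reg(R/J_{G(k,r)\setminus\{v\}})=2k+r$ from the lower-bound computation, since adjoining $(x_v,y_v)$ only annihilates two variables that do not occur in $J_{G(k,r)\setminus\{v\}}$. Hence the bound becomes $\max\{k+r+1,\,2k+r,\,k+r+2\}$, which equals $2k+r$ as soon as $k\ge2$. Together with the lower bound this forces $\reg(R/J_{G(k,r)})=2k+r$.

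\textbf{Main obstacle.} The crux is the structural identification of the three graphs produced by \eqref{ohtani_ses} and the verification that each is a complete graph with whiskers on $k+r$ distinct vertices, so that \Cref{lemma_4} applies verbatim; in particular one must check that completing $N_G(v)$ to a clique does not disturb the pendant leaves $v_{i_1},v_{i_3},u_{j_2}$, which survive as genuine whiskers. The only numerically delicate point is the final maximum: $2k+r$ dominates $k+r+2$ precisely when $k\ge2$, so the comparison is clean in that range, and the boundary case $k=1$ (where the $G\setminus\{v\}$ term is no longer the largest) should be treated on its own.
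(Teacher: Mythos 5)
Your argument is the same as the paper's: the lower bound by deleting the hub $v$, using additivity of regularity over the components $k\cdot P_3\sqcup r\cdot K_2$ and \cite[Corollary 2.2]{matsuda2013regularity}, and the upper bound by applying Ohtani's sequence \eqref{ohtani_ses} at $u=v$, identifying $G(k,r)_v$ and $G(k,r)_v\setminus\{v\}$ as complete graphs with whiskers on $k+r$ distinct vertices and invoking \Cref{lemma_4}. For $k\ge 2$ your proof is complete and matches the paper's essentially line for line.

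The caveat you raise at $k=1$ is not a formality, and it is to your credit that you flagged it: the paper's proof silently asserts $\max\{k+r+1,\,2k+r,\,k+r+2\}=2k+r$, which fails when $k=1$, and in fact the statement itself is false there. For $(k,r)=(1,1)$ the five vertices $v_{1_1},v_{1_2},v,u_{1_1},u_{1_2}$ induce a $P_5$ in $G(1,1)$, so by \cite[Corollary 2.2]{matsuda2013regularity} one gets $\reg(R/J_{G(1,1)})\ge \reg(R/J_{P_5})=4>3=2k+r$. So the theorem must be restricted to $k\ge 2$ (which is all that \Cref{thm_4} and \Cref{cor_binom_pair} require); the boundary case you proposed to "treat on its own" cannot be repaired in the form stated.
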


 \begin{proof}
Let $G=G(k,r)$ be as described early in this section and let $v \in V(G)$ be as given in the construction.
Then $G \setminus \{v\}$ is the union of $k$ copies of $P_3$ and $r$ copies of $P_2$. 
Hence, $\reg(\frac{R}{J_{G\setminus \{v\}}}) = 2k + r$ which implies that $\reg(\frac{R}{J_{G}}) \geq 2k+r$ \cite[Corollary 2.2]{matsuda2013regularity}.

Now, $G_v$ and $G_v \setminus \{v\}$ are complete graphs with whiskers, and they have $k+r$ cut vertices. By \Cref{lemma_4},  $\reg \left(\frac{R}{J_{G_v}}\right) = \reg \left(\frac{R}{J_{G_v \setminus \{v\}}}\right) = k+r +1$. Therefore, it follows from the short exact sequence (\ref{ohtani_ses}) (replacing $u$ by $v$) that
\begin{eqnarray*}
\reg \left(\frac{R}{J_G}\right) & \leq & \max \left\{\reg\left(\frac{R}{J_{{G}_v} + m_{\{v\}}}\right), \reg\left(\frac{R}{J_{{G} \setminus \{v\}} + m_{\{v\}}}\right), \reg \left(\frac{R}{J_{{G}_v \setminus \{v\}} + m_{\{v \}}}\right)+1 \right\} \\ & =  & 2k+r.    
\end{eqnarray*}
 Hence, $\reg \left(\frac{R}{J_{G(k,r)}}\right) = 2k +r $.
 \end{proof}
\begin{corollary}\label{cor_binom_pair}
Given any pair of integers $(k,m)$ such that $k \geq 1$ and $m \geq 2k$, there exists a connected graph $G$ such that $(\mathrm{v}(J_G), \reg (\frac{R}{J_G})) = (k, m).$
 \end{corollary}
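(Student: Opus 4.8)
The plan is to realize the prescribed pair $(k,m)$ using the family $G(k,r)$ already analyzed, exploiting that its two invariants were computed independently: \Cref{thm_4} gives $\mathrm{v}(J_{G(k,r)}) = k$ and \Cref{thm_5} gives $\reg(R/J_{G(k,r)}) = 2k+r$. Since the $\vv$-number is insensitive to $r$ while the regularity grows linearly in $r$, the only freedom I need is to choose $r$ so that $2k + r = m$, that is, $r = m-2k$. The hypothesis $m \geq 2k$ is precisely what guarantees $r \geq 0$, which explains the constraint appearing in the statement.

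For $k \geq 2$ I would simply set $G = G(k, m-2k)$ and read off the conclusion: $\mathrm{v}(J_G) = k$ by \Cref{thm_4}, while $\reg(R/J_G) = 2k + (m-2k) = m$ by \Cref{thm_5}. As $G(k,r)$ is connected by its construction, this disposes of every pair with $k \geq 2$ at once.

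The main obstacle is the boundary case $k = 1$: \Cref{thm_4} is stated only for $k \geq 2$, and indeed $G(1,r)$ fails to be a cone graph once $r \geq 1$, so its $\vv$-number is at least $2$ and the family $G(1,r)$ is unusable here. Instead I would take $G = \text{Cone}(v, P_{m+1})$, the cone over the path on $m+1$ vertices (note $m \geq 2$, so $m+1 \geq 3$). Being a non-complete cone graph, $G$ satisfies $\mathrm{v}(J_G) = 1$ by \cite[Theorem 3.20]{ambhore2023v}. The delicate point is to pin the regularity down to exactly $m$. For the lower bound, $P_{m+1}$ occurs as an induced subgraph of $G$, so $\reg(R/J_G) \geq \reg(R/J_{P_{m+1}}) = m$ by \cite[Corollary 2.2]{matsuda2013regularity} together with the known value $\reg(R/J_{P_n}) = n-1$. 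For the upper bound I would feed the universal vertex $v$ into the short exact sequence \eqref{ohtani_ses}: here $G_v$ and $G_v \setminus \{v\}$ are complete graphs of regularity $1$, whereas $G \setminus \{v\} = P_{m+1}$, so the sequence yields $\reg(R/J_G) \leq \max\{2, \reg(R/J_{P_{m+1}})\} = \max\{2, m\} = m$. Combining the two bounds gives $\reg(R/J_G) = m$, which finishes the case $k = 1$ and hence the corollary.
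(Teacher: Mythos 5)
Your proposal is correct and, for $k\geq 2$, coincides exactly with the paper's proof: both take $G=G(k,m-2k)$ and read off the two invariants from \Cref{thm_4} and \Cref{thm_5}, with $m\geq 2k$ ensuring $r=m-2k\geq 0$. The only divergence is the boundary case $k=1$, which the paper dispatches by citing \cite[Theorem 4.11]{ambhore2023v}, whereas you give a self-contained construction via the fan $\mathrm{Cone}(v,P_{m+1})$; your verification there is sound, since $m\geq 2$ makes the fan a non-complete cone graph (so $\vv(J_G)=1$), the induced copy of $P_{m+1}$ gives the lower bound $m$ on the regularity, and the sequence (\ref{ohtani_ses}) applies at $v$ because the universal vertex is not a free vertex, yielding the upper bound $\max\{2,m\}=m$.
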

 \begin{proof}
     For $k=1$, the result follows from \cite[Theorem 4.11]{ambhore2023v}. For $k\geq 2$, take $G=G(k,r)$.
 \end{proof}

Note that $\reg (\frac{R}{J_G}))=1$ if and only if $G$ is a complete graph by \cite[Theorem 2.1]{kiani2012binomial}. On the other hand, if $G$ is a complete graph, then $J_{G}$ is a prime ideal, which gives $\vv(J_G)=0$. Thus, there exists no connected graph $G$ such that $(\mathrm{v}(J_G), \reg (\frac{R}{J_G})) = (1, 1)$. However, if we take $G=C_4$, then $(\mathrm{v}(J_G), \reg (\frac{R}{J_G})) = (2, 2)$. Therefore, due to Corollary \ref{cor_binom_pair} and \cite[Conjecture 5.3]{ambhore2023v}, we propose the following question:

\begin{question}
    Given a pair of positive integers $(k,m)$ with $2\leq k\leq m$, does there exist a connected graph $G$ such that $(\mathrm{v}(J_G), \reg (\frac{R}{J_G})) = (k, m)?$
\end{question}

\section{Proof of a conjecture on $\vv$-number for binomial edge ideals}\label{sec:conj}

In this section, we settle the conjecture \cite[Conjecture 3.5]{bms24} of Biswas et al. for the class of binomial edge ideals.
The following proposition is due to \cite[Theorem 2.1]{kiani2012binomial} and \cite[Theorem 4.1]{ert22}, which classifies all the binomial edge ideals having linear powers.

\begin{proposition}\label{thm_lp}
    Let $G$ be a simple graph. Then the following are equivalent:
    \begin{enumerate}
        \item $J_G$ has a linear resolution;
        \item $J_G$ has linear powers;
        \item $G$ is complete.
    \end{enumerate}
\end{proposition}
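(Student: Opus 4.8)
The plan is to establish the equivalence as a cycle of implications $(3) \Rightarrow (2) \Rightarrow (1) \Rightarrow (3)$, leaning on the two cited results. The implication $(3) \Rightarrow (2)$ is the substantive content: if $G$ is complete, then $J_G$ is the ideal of $2 \times 2$ minors of a generic $2 \times n$ matrix, and it is classically known that such determinantal ideals have linear powers (indeed, all their powers have a linear resolution). This is precisely the content of \cite[Theorem 4.1]{ert22}, which I would invoke directly. The implication $(2) \Rightarrow (1)$ is immediate and completely general: if every power $J_G^k$ has a linear resolution, then in particular $J_G = J_G^1$ has a linear resolution, so nothing needs to be proved beyond specializing $k = 1$.

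The remaining implication $(1) \Rightarrow (3)$ is where \cite[Theorem 2.1]{kiani2012binomial} enters. That result characterizes precisely when $J_G$ has a linear resolution: since $J_G$ is generated in degree $2$, a linear resolution forces strong structural constraints on $G$, and Kiani--Madani show that $J_G$ has a linear resolution if and only if $G$ is complete. Thus I would simply cite this characterization to close the cycle. The one point requiring minor care is to confirm that the degree-$2$ generation of $J_G$ matches the hypothesis under which \cite[Theorem 2.1]{kiani2012binomial} is stated, so that ``linear resolution'' is being used consistently (i.e., linear in the sense that all syzygies after the generators are linear).

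Assembling these, the three conditions are seen to be equivalent. The main obstacle, if any, is conceptual rather than computational: one must be certain that the two cited theorems together cover all three arrows without a gap, in particular that \cite[Theorem 4.1]{ert22} genuinely yields \emph{linear powers} (all powers having linear resolution) and not merely the linearity of the resolution of $J_G$ itself, since otherwise $(3) \Rightarrow (2)$ would be incomplete. Granting the cited statements as given in the excerpt, the proof reduces to correctly chaining them, and no independent calculation is required.
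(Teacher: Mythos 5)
Your proposal is correct and follows essentially the same route as the paper: the cycle $(3)\Rightarrow(2)\Rightarrow(1)\Rightarrow(3)$, with $(1)\Rightarrow(3)$ from \cite[Theorem 2.1]{kiani2012binomial}, $(2)\Rightarrow(1)$ trivial, and $(3)\Rightarrow(2)$ from \cite[Theorem 4.1]{ert22}. The only cosmetic difference is that the paper phrases the last step via $G$ being closed and the regularity formula $\reg(R/J_G^k)=2k-1$, rather than via the determinantal description, but both invoke the same cited theorem.
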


\begin{proof}
    (1) $\implies$ (3) follows from \cite[Theorem 2.1]{kiani2012binomial} and (2) $\implies$ (1) is obvious.\par 
    (3) $\implies$ (2): Since $G$ is a complete graph, $G$ is closed. Thus, by \cite[Theorem 4.1]{ert22}, we have $\reg(\frac{R}{J_{G}^k})=2k-1$ for all $k\geq 1$. Hence, $J_{G}$ has linear powers.
\end{proof}

We now prove \cite[Conjecture 3.5]{bms24} for the class of binomial edge ideals.

\begin{theorem}\label{thm_vlp}
    Let $G$ be a simple graph. If $J_G$ has a linear resolution (equivalently, $J_G$ has linear powers), then $\vv(J_{G}^k)=\alpha(J_G)k-c(J_G)=2k-2$ for all $k\geq 1$. 
\end{theorem}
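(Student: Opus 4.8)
The plan is to reduce to a determinantal computation and then bound $\vv(J_G^k)$ from both sides. By \Cref{thm_lp}, the hypothesis forces $G$ to be a complete graph $K_n$ with $n\geq 2$, so $J_G=I_2(X)$ is the ideal of $2\times 2$ minors of the generic $2\times n$ matrix $X$ with rows $(x_1,\dots,x_n)$ and $(y_1,\dots,y_n)$. Every generator $f_{ij}$ has degree $2$, so $\alpha(J_G)=2$ and $J_G^k$ is generated in degree $2k$; hence every nonzero homogeneous element of $J_G^k$ has degree at least $2k$. Since $K_n$ is complete, its only cut set is $\emptyset$, so $J_G$ is its own unique minimal prime (in particular $J_G$ is prime). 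The least degree of a generator of this prime is $2$, which is exactly why $c(J_G)=2$ and the asserted value equals $2k-2$; thus it suffices to prove $\vv(J_G^k)=2k-2$ for all $k\geq 1$.

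The key structural input, and the step I expect to be the main obstacle, is to pin down the associated primes of the powers together with the $J_G$-adic filtration. For the ideal of maximal minors of a generic matrix it is classical that ordinary and symbolic powers agree, $J_G^k=J_G^{(k)}$; since $J_G$ is prime this yields $\mathrm{Ass}(J_G^k)=\{J_G\}$, so each $J_G^k$ is $J_G$-primary and $\vv(J_G^k)=\vv_{J_G}(J_G^k)$. I will also use that the associated graded ring $\mathrm{gr}_{J_G}(R)$ is a domain; equivalently, the order function $v(g)=\max\{t:\, g\in J_G^{\,t}\}$ satisfies $v(gh)=v(g)+v(h)$. Both facts are standard determinantal-ring theory; alternatively, $R_{J_G}$ is a regular local ring because the determinantal variety $V(J_G)$ is smooth away from the origin, and one argues after localizing at $J_G$.

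For the lower bound, let $f\in R_d$ be homogeneous with $(J_G^k:f)=J_G$. Then $f_{12}\in J_G\subseteq (J_G^k:f)$ gives $f\,f_{12}\in J_G^k$. As $R$ is a domain, $f\,f_{12}$ is a nonzero homogeneous element of degree $d+2$, and since $J_G^k$ is concentrated in degrees $\geq 2k$ we get $d+2\geq 2k$, i.e. $d\geq 2k-2$. Because $J_G$ is the unique associated prime, this proves $\vv(J_G^k)\geq 2k-2$.

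For the upper bound I will exhibit the witness $f=f_{12}^{\,k-1}$, of degree $2k-2$ (interpreted as $1$ when $k=1$). For each generator $f_{ij}$ the product $f_{12}^{\,k-1}f_{ij}$ is a product of $k$ degree-two generators of $J_G$, hence lies in $J_G^k$; therefore $J_G\subseteq (J_G^k:f)$. For the reverse inclusion, suppose $hf_{12}^{\,k-1}\in J_G^k$. Since $f_{12}$ has degree $2$ it cannot lie in $J_G^2$ (whose nonzero elements have degree $\geq 4$), so $v(f_{12})=1$ and, by additivity of $v$, $v(f_{12}^{\,k-1})=k-1$. Then $v(h)+(k-1)=v(hf_{12}^{\,k-1})\geq k$ forces $v(h)\geq 1$, i.e. $h\in J_G$. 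Hence $(J_G^k:f)=J_G$, so $\vv_{J_G}(J_G^k)\leq 2k-2$. Combining the two bounds gives $\vv(J_G^k)=2k-2=\alpha(J_G)k-c(J_G)$, completing the argument.
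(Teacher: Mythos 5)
Your proof is correct and uses the same witness as the paper ($f_{ij}^{\,k-1}$, after reducing to $G=K_n$ via \Cref{thm_lp} and using $J_G^{(k)}=J_G^k$ to get $\mathrm{Ass}(J_G^k)=\{J_G\}$), but two sub-steps are handled differently. For the inclusion $(J_G^k:f_{12}^{\,k-1})\subseteq J_G$ you invoke additivity of the $J_G$-adic order function, i.e.\ that $\mathrm{gr}_{J_G}(R)$ is a domain; this is true (it follows from $J_G^{(k)}=J_G^k$ together with the fact that the order function of the regular local ring $R_{J_G}$ is a valuation --- note that $R_{J_G}$ is regular simply because $R$ is a polynomial ring, not because of any smoothness of $V(J_G)$, so that part of your justification is off even though the conclusion holds), but it is heavier machinery than needed: the paper observes that $(J_G^k:f_{ij}^{\,k-1})$ is a proper ideal for degree reasons, hence has all its associated primes in $\mathrm{Ass}(J_G^k)=\{J_G\}$, so it is $J_G$-primary and, containing $J_G$, must equal $J_G$. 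For the lower bound $\vv(J_G^k)\geq 2k-2$ you give a short self-contained degree argument ($f\,f_{12}\in J_G^k$ forces $\deg f\geq 2k-2$), whereas the paper simply cites \cite[Lemma 3.1]{bms24}; your version has the advantage of making the proof independent of that reference. Both routes are valid and yield $\vv(J_G^k)=2k-2$.
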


\begin{proof}
Due to Theorem \ref{thm_lp}, $G$ is a complete graph. Thus, $J_G$ is a prime ideal, which implies that $\vv(J_G)=0$. Since $J_G$ is a determinantal ideal,  $J_{G}^{(k)}=J_{G}^k$ for all $k\geq 1$, \cite{dep80}. This gives $\mathrm{Ass}(J_{G}^k)=\{J_G\}$ for all $k\geq 1$. We may assume that $V(G)=[n]$ with $n\geq 2$. Let us consider $f=f_{ij}$ for any $i,j\in [n]$ with $i\neq j$. Then $f\in J_G$, and we claim that $(J_{G}^k:f^{k-1})=J_{G}$. It is clear that $J_{G}\subseteq (J_{G}^{k}:f^{k-1})$. Since $\deg(f^{k-1})= 2k-2$ and $\alpha(J_{G}^{k})=2k$, $f^{k-1}\not \in J_{G}^{k}$. Thus, $(J_{G}^k:f^{k-1})$ is a proper ideal of $S$, which gives $\mathrm{Ass}(J_{G}^k:f^{k-1})\subseteq \mathrm{Ass}(J_{G}^{k})=\{J_G\}$. Hence, $(J_{G}^k:f^{k-1})=J_{G}$. Therefore, $\vv(J_{G}^{k})\leq 2k-2$. Thanks to \cite[Lemma 3.1]{bms24}, $\vv(J_{G}^{k})= 2k-2$ for all $k\geq 1$. 
\end{proof}

\bibliographystyle{plain}
\bibliography{sample}

\end{document}